\RequirePackage{fix-cm}
\documentclass[letterpaper,11 pt]{article}
\usepackage{graphicx}
\usepackage{amsfonts,amsmath,fullpage,bbm}
\usepackage{amssymb,amsthm,multirow,verbatim}
\usepackage{acronym,wrapfig,plain,mathrsfs,enumerate,relsize,color}

\usepackage{algorithm}
\newtheorem{theorem}{Theorem}
\newtheorem{remark}{Remark}
\newtheorem{corollary}{Corollary}
\newtheorem{lemma}{Lemma}

\newtheorem{property}{Property}
\newtheorem{proposition}{Proposition}
\newtheorem{assumption}{Assumption}
\newtheorem{definition}{Definition}
\usepackage{subfig}
\usepackage{algorithmic}
\usepackage{pifont}
\usepackage{footmisc}
\newcommand{\cmark}{\ding{51}}%
\newcommand{\xmark}{\ding{55}}%
\newcommand{\us}[1]{{\color{black}#1}}

\newcommand{\aj}[1]{{\color{black}#1}}
\newcommand{\uvs}[1]{{\color{black}#1}}
\newcommand{\vvs}[1]{{\color{black}#1}}
\newcommand{\blue}[1]{{\color{black}#1}}
\newcommand{\af}[1]{{\color{black}#1}}
\newcommand{\red}[1]{{\color{black}#1}}
\newcommand{\fythird}[1]{{\color{black}#1}}

\newcommand{\afr}[1]{{\color{black}#1}}
\newcommand{\jal}[1]{{\color{black}#1}}
\newcommand{\usr}[1]{{\color{black}#1}}
\def\ulambda{{\underline{\lambda}}}
\def\olambda{{\overline{\lambda}}}
\begin{document}




\title{A Variable Sample-size Stochastic Quasi-Newton  Method 
for Smooth and Nonsmooth Stochastic Convex Optimization}

\author{Afrooz Jalilzadeh, Angelia Nedi{\'{c}}, Uday V. Shanbhag\footnote{Authors contactable at \texttt{azj5286@psu.edu,Angelia.Nedich@asu.edu,udaybag@psu.edu,farzad.yousefian@okstate.edu} and gratefully acknowledge the support of NSF Grants 1538605, 1538193  (Shanbhag),  1246887 (CAREER, Shanbhag), CCF-1717391 (Nedi\'{c}), and by the ONR grant no. {N000141612245} (Nedi\'{c}); Conference version \aj{was published} in IEEE Conference on Decision and Control (2018)\cite{jalilzadeh2018variable}.}, and Farzad Yousefian}
%


\date{}
\maketitle

\begin{abstract}%
Classical theory for quasi-Newton schemes has focused on smooth deterministic
unconstrained optimization while recent forays into stochastic convex
optimization have largely resided in smooth, unconstrained, and strongly convex
regimes. Naturally, there is a compelling need to address nonsmoothness, the
lack of strong convexity, and the presence of constraints. Accordingly, this
paper presents a quasi-Newton framework that can process merely convex and
possibly nonsmooth (but smoothable) stochastic convex problems.  We propose a
framework that combines iterative smoothing and regularization with a
variance-reduced scheme reliant on using an increasing sample-size of
gradients. We  make the following contributions. (i) We develop {\em a
regularized and smoothed variable sample-size BFGS update} ({\bf rsL-BFGS})
that generates a sequence of Hessian approximations and  can accommodate
nonsmooth convex objectives by utilizing iterative {regularization and}
smoothing. (ii) In {\em strongly convex} regimes with state-dependent noise,
the proposed variable sample-size stochastic quasi-Newton \aj{({\bf VS-SQN})}
scheme admits a non-asymptotic linear rate of convergence while the oracle
complexity of computing an $\epsilon$-solution is
$\mathcal{O}({\kappa^{m+1}}/\epsilon)$ where $\kappa$ denotes the condition
number and $m\geq 1$.  In nonsmooth (but smoothable) regimes, {using Moreau
smoothing {retains the} linear convergence rate} for the resulting smoothed
{\bf VS-SQN} (or {\bf sVS-SQN}) scheme.  Notably, {the nonsmooth regime allows
for accommodating convex} constraints. {To contend with the possible unavailability of Lipschitzian and strong convexity parameters, we also provide sublinear rates for diminishing steplength variants that do not rely on the knowledge of such parameters}; (iii) In merely convex but smooth
settings, the regularized {\bf VS-SQN} scheme {\bf rVS-SQN} displays a rate of
$\mathcal{O}(1/k^{(1-\varepsilon)})$ with an oracle complexity of
$\mathcal{O}(1/\epsilon^3)$. When the smoothness requirements are weakened, the
rate for the regularized and smoothed {\bf VS-SQN} scheme {\bf rsVS-SQN}
worsens to $\mathcal{O}(k^{-1/3})$.  Such statements allow for a
state-dependent noise assumption under  a quadratic growth property on the
objective. To the best of our knowledge, the rate results are {amongst the
first available rates for QN methods in nonsmooth regimes.} Preliminary numerical evidence
suggests that the schemes compare well with accelerated gradient counterparts
on selected problems in stochastic optimization and machine learning with
significant benefits in ill-conditioned regimes.
\end{abstract}
\section{Introduction}
We consider the stochastic convex optimization problem 
\begin{align}\label{main problem} 
\min_{x\in \mathbb{R}^n} \ f(x)\triangleq \mathbb{E}[\aj{F(x,\xi(\omega))}],
 \end{align}
where $ \aj{\xi}: \Omega \rightarrow
\mathbb{R}^o$, ${F}: \mathbb{R}^n \times \mathbb{R}^o \rightarrow
\mathbb{R}$, {and} $(\Omega,\mathcal{F},\mathbb{P})$ denotes the associated
probability space. Such problems have broad applicability in engineering, economics, statistics, and machine learning.
Over the last two decades, two avenues for solving such problems have emerged
via sample-average approximation~(SAA)~\cite{kleywegt2002sample} and  stochastic
approximation (SA)~\cite{robbins51sa}. In this paper, we focus on
quasi-Newton variants of the latter. Traditionally, SA schemes have
been afflicted by a key shortcoming in {that such 
schemes display a markedly poorer convergence rate
than their deterministic variants.}  For instance, in standard stochastic
gradient schemes for strongly convex smooth problems {with 
Lipschitz continuous gradients}, the mean-squared error diminishes at a rate
of $\mathcal{O}(1/k)$ while deterministic schemes display a geometric rate of
convergence. \afr{This gap can be reduced by utilizing an increasing sample-size of
gradients, an approach
considered in~\cite{homem2003variable,deng2009variable,pasupathy2010choosing,FriedlanderSchmidt2012,byrd12},} and subsequently
refined for gradient-based methods for  strongly
convex~\cite{shanbhag15budget,jofre2017variance,jalilzadeh2018optimal},
convex~\cite{jalilzadeh16egvssa,ghadimi2016accelerated,jofre2017variance,jalilzadeh2018optimal},
and nonsmooth convex regimes~\cite{jalilzadeh2018optimal}.  Variance-reduced techniques have also been considered for stochastic quasi-Newton (SQN)
techniques~\cite{lucchi2015variance,zhou2017stochastic,bollapragada2018progressive}
under twice differentiability and strong convexity requirements. To the best
of our knowledge, the only available SQN scheme for merely convex but smooth
problems is the regularized SQN scheme presented in our prior
work~\cite{yousefian2017stochastic} where an iterative regularization of
the form ${1 \over 2} \mu_k \|x_k - x_0\|^2$ is employed to address the lack
of strong convexity while $\mu_k$ is driven to zero at a suitable rate.
Furthermore, a sequence of matrices $\{H_k\}$ is generated using a
regularized L-BFGS \us{update} or ({\bf rL-BFGS}) update. However, much
of the extant schemes in this regime either have gaps in the rates (compared
to deterministic counterparts) or cannot contend with nonsmoothness. \\

\begin{wrapfigure}[12]{r}{0.35\textwidth}
\vspace*{-1cm}
{\includegraphics[scale=0.23]{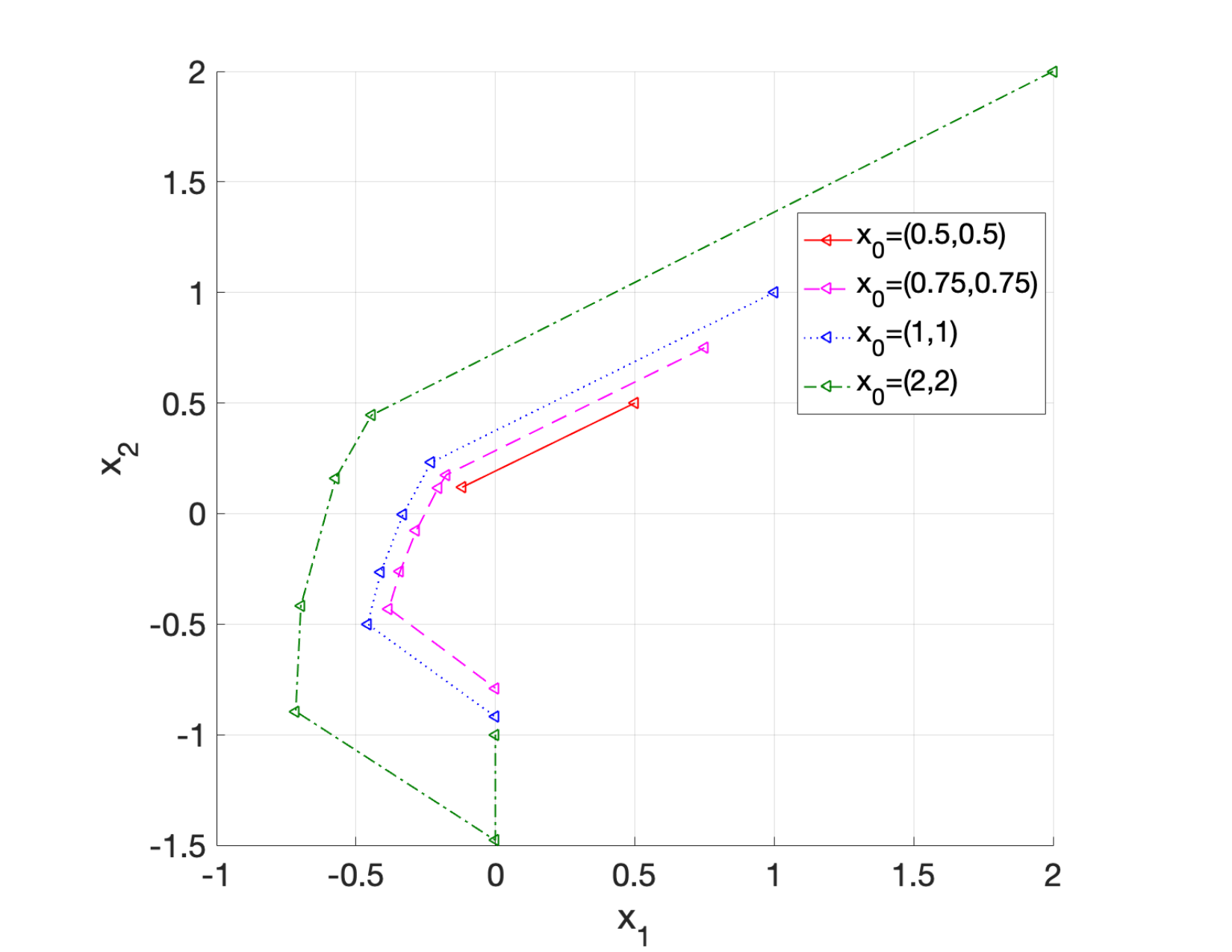}}\caption{Lewis-Overton example
	\label{fig:nssqn}}{}
	\end{wrapfigure}

\noindent {\bf Quasi-Newton schemes for nonsmooth convex problems.} { There
have been some attempts to apply (L-)BFGS directly to the deterministic
nonsmooth convex problems. But the method may fail as shown
in~\cite{lukvsan1999globally, haarala2004large, lewis2008behavior}; e.g.
in~\cite{lewis2008behavior}, the authors consider minimizing ${1\over
2}\|x\|^2+\max\{2|x_1|+x_2,3x_2\}$ in $\mathbb{R}^2$, BFGS takes a null step
(steplength is zero) for different starting points and  fails to converge to
the optimal solution $(0,-1)$ (except when initiated from $(2,2)$) (See
Fig.~\ref{fig:nssqn}).  Contending with nonsmoothness has been considered via a
subgradient quasi-Newton method \cite{yu2010quasi} for which global
convergence can be recovered by identifying a descent direction and
utilizing a line search.  An alternate approach~\cite{yuan2013gradient}
develops a globally convergent trust region quasi-Newton method in which Moreau
smoothing was employed. Yet, there appear to be neither non-asymptotic
rate statements available nor considerations of stochasticity in nonsmooth
regimes.\\  


\noindent {\bf Gaps.} Our research is motivated by several gaps. (i)
First, can we develop smoothed generalizations of ({\bf rL-BFGS}) that can
contend with nonsmooth problems in a seamless fashion? (ii) Second, can one
recover determinstic convergence rates (to the extent possible) by leveraging
variance reduction techniques? (iii) Third, can one address nonsmoothness on
stochastic convex optimization, which would allow for addressing more general
problems as well as accounting for the presence of constraints? (iv) Finally,
much of the prior results have stronger assumptions on the moment assumptions
on the noise which require weakening to allow for wider applicability of the
scheme.   
\subsection{{A survey of literature}} Before proceeding, we review some
relevant prior research in stochastic quasi-Newton methods and variable
sample-size schemes for stochastic optimization. {In Table~\ref{table
results}, we summarize the key advances in SQN methods where much of prior work
focuses on strongly convex (with a few exceptions). Furthermore, from
Table~\ref{table assumption}, it  can be seen that an assumption of twice
continuous differentiability and boundedness of eigenvalues on the true Hessian
is often made. In addition, almost all results rely on having a uniform bound
on the conditional second moment of stochastic gradient error. 
\begin{table}[htb]
	\scriptsize
	\begin{tabular}{|c|c|c|c|c|c|c|c|} \hline 
&Convexity&Smooth&$N_k$&$\gamma_k$&Conver. rate&Iter. complex.&Oracle complex.\\ \hline\hline
RES \cite{mokhtari2014res}&SC&\cmark&N&$1/k$&$\mathcal O(1/k)$&-&-\\ \hline
Block BFGS \cite{gower2016stochastic}&\multirow{3}{*}{SC}&\multirow{3}{*}{\cmark}& \multirow{2}{*}{N (full grad}&\multirow{3}{*}{$\gamma$}&\multirow{3}{*}{$\mathcal O(\rho^k)$}&\multirow{3}{*}{-}&\multirow{3}{*}{-}\\
Stoch. L-BFGS \cite{moritz2016linearly}&&&&&&&\\ 
&&&periodically) &&&&\\ \hline
SQN  \cite{wang2017stochastic}&NC&\cmark&$N$&$k^{-0.5}$& $\mathcal O(1/\sqrt k)$& $\mathcal O(1/\epsilon^2)$&- \\ \hline
\multirow{2}{*}{SdLBFGS-VR \cite{wang2017stochastic}}&\multirow{2}{*}{NC}&\multirow{2}{*}{\cmark}&$N$(full grad&\multirow{2}{*}{$\gamma$}&\multirow{2}{*}{ $\mathcal O(1/k)$}&\multirow{2}{*}{ $\mathcal O(1/\epsilon)$}&\multirow{2}{*}{-} \\ 
&&&periodically)&&&&\\ \hline
r-SQN \cite{yousefian2017stochastic}&C&\cmark&$1$&$k^{-2/3+\varepsilon}$&$\mathcal O(1/k^{1/3-\varepsilon})$&-&-\\ \hline
SA-BFGS \cite{zhou2017stochastic}&SC&\cmark&$N$&$\gamma_k$&$\mathcal O(\rho^k)$&$\mathcal O(\ln(1/\epsilon))$&$\mathcal O({1/ \epsilon^2}(\ln({1/\epsilon}))^4)$\\ \hline
Progressive&\multirow{2}{*}{NC}&\multirow{2}{*}{\cmark}&\multirow{2}{*}{-}&\multirow{2}{*}{$\gamma$}&\multirow{2}{*}{$\mathcal O(1/k)$}&\multirow{2}{*}{-}&\multirow{2}{*}{-}\\ 
Batching \cite{bollapragada2018progressive}&&&&&&&\\ \hline
Progressive &\multirow{2}{*}{SC}&\multirow{2}{*}{\cmark}&\multirow{2}{*}{-}&\multirow{2}{*}{$\gamma$}&\multirow{2}{*}{$\mathcal O(\rho^k)$}&\multirow{2}{*}{-}&\multirow{2}{*}{-}\\ 
Batching \cite{bollapragada2018progressive}&&&&&&&\\ \hline\hline
\eqref{VS-SQN}&SC&\cmark&$\lceil \rho^{-k}\rceil$&$\gamma$&$\mathcal O(\rho^k)$&$\mathcal O({\kappa}\ln(1/\epsilon))$&$\mathcal O(\kappa/\epsilon)$\\ \hline
\eqref{sVS-SQN}&SC&\xmark&$\lceil {\rho^{-k}}\rceil$&$ {\gamma}$&$\mathcal O({\rho^k})$&$\mathcal O({\ln(1/\epsilon)})$&$\mathcal O({1/\epsilon})$\\ \hline
\eqref{rVS-SQN}&C&\cmark&$\lceil k^a\rceil$&$k^{-\varepsilon}$&$\mathcal O(1/k^{1-\varepsilon})$&$\mathcal O(1/\epsilon^{1\over 1-\varepsilon})$&$\mathcal{O}(1/\epsilon^{(3+\varepsilon)/(1-\varepsilon)})$\\ \hline
\eqref{rsVS-SQN}& C&\xmark&$\lceil k^a\rceil$&$k^{-1/3+\varepsilon}$&$\mathcal O(1/k^{1/3})$&$\mathcal O(1/\epsilon^{{3}})$& $\mathcal O\left({1/ {\epsilon}^{{(2+\varepsilon)/( 1/3)}}}\right)$\\ \hline
	 	\end{tabular}
		\caption{Comparing convergence rate of related schemes (note that $a>1$)} \label{table results}
\end{table}

\begin{table}[htb]
\scriptsize
	\begin{tabular}{|c|c|c|c|p{3in}|} \hline 
&Convexity&Smooth&state-dep. noise&Assumptions\\ \hline\hline
RES \cite{mokhtari2014res}&SC&\cmark&\xmark&$\ulambda\mathbf{I}  \preceq H_k\preceq \olambda \mathbf{I}, \quad 0<\ulambda\leq \olambda$, $f$ is twice differentiable \\ \hline
Stoch. block BFGS \cite{gower2016stochastic}&\multirow{3}{*}{SC}&\multirow{3}{*}{\cmark}&\multirow{3}{*}{\xmark}&\multirow{2}{*}{$\ulambda\mathbf{I}  \preceq \nabla^2 f(x) \preceq \olambda \mathbf{I}, \quad 0<\ulambda\leq \olambda$, $f$ is twice differentiable}\\ 
Stoch. L-BFGS \cite{moritz2016linearly}&&&&\\ \hline
SQN for non convex \cite{wang2017stochastic}& NC &\cmark& \xmark& $\preceq \nabla^2 f(x) \preceq \olambda \mathbf{I}, \quad 0<\ulambda\leq \olambda$, $f$ is  differentiable\\ \hline
SdLBFGS-VR \cite{wang2017stochastic}&NC&\cmark&\xmark&$\nabla^2 f(x) \preceq \olambda \mathbf{I},\quad \olambda\geq 0$, $f$ is twice differentiable \\ \hline
r-SQN \cite{yousefian2017stochastic}&C&\cmark&\xmark&$\ulambda\mathbf{I}  \preceq H_k\preceq \olambda \mathbf{I}, \quad 0<\ulambda\leq \olambda$, $f$ is differentiable\\ \hline
\multirow{2}{*}{SA-BFGS \cite{zhou2017stochastic}}&\multirow{2}{*}{SC}&\multirow{2}{*}{\cmark}&\multirow{2}{*}{\xmark}&$f_k(x)$ is standard self-concordant for every possible sampling, The Hessian is Lipschitz continuous, \\
&&&&$\ulambda\mathbf{I}  \preceq \nabla^2 f(x) \preceq \olambda \mathbf{I}, \quad 0<\ulambda\leq \olambda$, $f$ is C$^2$\\  \hline
Progressive Batching \cite{bollapragada2018progressive}&NC&\cmark&\xmark& $\nabla^2f(x) \preceq \olambda \mathbf{I}, \quad \olambda\geq 0$, sample size is controlled by the exact inner product quasi-Newton test, $f$ is C$^2$\\ \hline
Progressive Batching \cite{bollapragada2018progressive}&SC&\cmark&\xmark&$\ulambda\mathbf{I}  \preceq \nabla^2 f(x) \preceq \olambda \mathbf{I}, \quad 0<\ulambda\leq \olambda$, sample size controlled by exact inner product quasi-Newton test,  $f$ is C$^2$\\ \hline \hline
\eqref{VS-SQN} &SC&\cmark&\cmark&$\ulambda\mathbf{I}  \preceq H_k\preceq \olambda_k \mathbf{I}, \quad 0<\ulambda\leq \olambda_k$, $f$ is C$^1$ \\ \hline
\eqref{sVS-SQN} &SC&\xmark&\cmark&$\ulambda_k\mathbf{I}  \preceq H_k\preceq \olambda_k \mathbf{I}, \quad 0<\ulambda_k\leq \olambda_k$\\ \hline
\multirow{2}{*}{\eqref{rVS-SQN} }&\multirow{2}{*}{C}&\multirow{2}{*}{\cmark}&\cmark&$\ulambda\mathbf{I}  \preceq H_k\preceq \olambda_k \mathbf{I},  0<\ulambda\leq \olambda_k$, $f$ is C$^1$, has quad. growth prop.\\  
&&&\xmark&$\ulambda\mathbf{I}  \preceq H_k\preceq \olambda_k \mathbf{I}$, $f$ is C$^1$ \\
\hline \multirow{2}{*}{\eqref{rsVS-SQN} }&\multirow{2}{*}{C}&\multirow{2}{*}{\xmark}&\cmark&$\ulambda_k\mathbf{I}  \preceq H_k\preceq \olambda_k \mathbf{I},  0<\ulambda\leq \olambda_k$, has quad. growth prop.\\ 
&&&\xmark&$\ulambda_k\mathbf{I}  \preceq H_k\preceq \olambda_k \mathbf{I}$  \\ \hline	 	\end{tabular}
		\caption{Comparing assumptions of related schemes }
	\label{table assumption}
\end{table}

\noindent {\bf (i) Stochastic quasi-Newton~(SQN) methods.} QN
schemes~\cite{liu1989limited,nocedal99numerical} have proved enormously influential in solving nonlinear programs, motivating the use of 
stochastic Hessian information~\cite{byrd12}. \aj{In 2014, Mokhtari and Riberio ~\cite{mokhtari2014res} introduced  a regularized stochastic version of the Broyden- Fletcher-Goldfarb-Shanno (BFGS) quasi-Newton method \cite{Fletcher} by updating the
matrix $H_k$  using a modified BFGS update rule} to ensure
convergence while limited-memory variants~\cite{byrd2016stochastic,
mokhtari2015global} and nonconvex generalizations~\cite{wang2017stochastic}
were subsequently introduced. In our prior work~\cite{yousefian2017stochastic},
an SQN method was presented for merely convex smooth problems, characterized by
rates of $\mathcal O(1/k^{{1\over 3}-\varepsilon})$ and $\mathcal
O(1/k^{1-\varepsilon})$ for the stochastic and deterministic case,
respectively. In~\cite{yousefian17smoothing}, via convolution-based
smoothing to address nonsmoothness, we provide a.s. convergence guarantees and
rate statements. 

\noindent {\bf (ii) Variance reduction schemes for stochastic optimization.}
Increasing sample-size schemes for finite-sum machine learning
problems~\cite{FriedlanderSchmidt2012,byrd12} have provided the basis for a
range of variance reduction schemes in machine
learning~\cite{roux2012stochastic,xiao2014proximal},
amongst reduction others.   By utilizing variable sample-size (VS) stochastic
gradient schemes, linear convergence rates were obtained for strongly convex
problems~\cite{shanbhag15budget,jofre2017variance} and these rates were
subsequently improved (in a constant factor sense) through a VS-{\em accelerated} proximal method developed by Jalilzadeh et
al.~\cite{jalilzadeh2018optimal} ({called ({\bf VS-APM})}). In convex regimes, Ghadimi and
Lan~\cite{ghadimi2016accelerated}  developed an accelerated framework that
admits the optimal rate of $\mathcal{O}(1/k^2)$ and the optimal oracle complexity (also see \cite{jofre2017variance}), improving the rate statement presented
in~\cite{jalilzadeh16egvssa}. More recently, in ~\cite{jalilzadeh2018optimal},
Jalilzadeh et al. present a smoothed accelerated scheme that admits the optimal
rate of $\mathcal{O}(1/k)$ and optimal oracle complexity for nonsmooth problems, 
recovering the findings in~\cite{ghadimi2016accelerated} in the smooth regime.
Finally, more intricate sampling rules are developed
in~\cite{bollapragada2017adaptive,pasupathy2018sampling}.

\noindent {\bf (iii) Variance reduced SQN schemes.} Linear~\cite{lucchi2015variance}
and superlinear~\cite{zhou2017stochastic} convergence statements for variance
reduced SQN schemes were provided in twice differentiable regimes under suitable
assumptions on the Hessian.  A ({\bf VS-SQN}) scheme with
L-BFGS~\cite{bollapragada2018progressive} was presented in strongly convex
regimes under suitable bounds on the Hessian. }  

\subsection{Novelty and contributions} 
{In this paper, we consider four variants of our proposed variable sample-size stochastic quasi-Newton method, \us{distinguished by whether the function $F(x,\omega)$ is strongly convex/convex and smooth/nonsmooth. The vanilla scheme is given by 
\begin{align}
x_{k+1}:=x_k-\gamma_kH_k{\frac{\sum_{j=1}^{N_k} u_k(x_k,\omega_{j,k})}{N_k}},
\end{align}
where $H_k$ denotes an approximation of the inverse of the Hessian, \af{$\omega_{j,k}$ denotes the $j^{th}$ realization of $\omega$ at the $k^{th}$ iteration}, 
$N_k$ denotes the sample-size at iteration $k$, and $u_k(x_k,\omega_{j,k})$ is
given by one of the following: (i)  ({\bf VS-SQN}) where $F(.,\omega)$ is
strongly convex and smooth, $u_k (x_k, \omega_{j,k}) \triangleq \nabla_x
F(x_k,\omega_{j,k})$; (ii) Smoothed ({\bf VS-SQN}) or ({\bf sVS-SQN}) where
$F(.,\omega)$ is strongly convex and nonsmooth and $F_{\eta_k}(x,\omega)$ is a smooth approximation of $F(x,\omega)$, $u_k (x_k, \omega_{j,k}) \triangleq
\nabla_x F_{\eta_k}(x_k,\omega_{j,k})$; (iii) Regularized ({\bf VS-SQN}) or ({\bf
rVS-SQN}) where $F(.,\omega)$ is convex and smooth  {and $F_{\mu_k}(.,\omega)$ is a regularization of $F(.,\omega)$}, $u_k (x_k, \omega_{j,k})
\triangleq \nabla_x F_{\mu_k}(x_k,\omega_{j,k})$; (iv)  regularized and smoothed
({\bf VS-SQN}) or ({\bf rsVS-SQN}) where $F(.,\omega)$ is convex and possibly
nonsmooth and $F_{\eta_k,\mu_k}(.,\omega)$ denotes a regularized smoothed approximation, $u_k (x_k, \omega_{j,k}) \triangleq \nabla_x
F_{\eta_k,\mu_k}(x_k,\omega_{j,k})$. We recap these definitions in the
relevant sections.  We briefly discuss our contibutions and accentuate the novelty of our work.\\

\noindent (I) {\em A regularized smoothed L-BFGS  update.} A regularized
smoothed L-BFGS update ({\bf rsL-BFGS}) is developed in Section~\ref{sec:rslbfgs},
extending the realm of L-BFGS scheme to merely convex and possibly nonsmooth
regimes by integrating both regularization and smoothing. As a consequence, SQN techniques can now contend with merely convex  {and nonsmooth} problems with convex constraints.\\

\noindent (II) {\em Strongly convex problems.} (II.i) ({\bf VS-SQN}). In Section~\ref{sec:3}, we
present a variable sample-size SQN scheme and prove that the convergence rate
is $\mathcal{O}(\rho^k)$ (where $\rho < 1$) while the iteration and oracle
complexity are proven to be $\mathcal{O}({\kappa^{m+1}} \ln(1/\epsilon))$ and
$\mathcal{O}(1/\epsilon)$, respectively. Notably, our findings are under a
weaker assumption of  {either} state-dependent noise (thereby extending the result
from~\cite{bollapragada2018progressive}) and do not necessitate assumptions of
twice continuous
differentiability~\cite{moritz2016linearly,gower2016stochastic} or Lipschitz
continuity of Hessian~\cite{zhou2017stochastic}.  (II.ii).  ({\bf sVS-SQN}). By integrating a smoothing parameter, we extend ({\bf VS-SQN}) to
contend with nonsmooth but smoothable objectives. Via Moreau smoothing,
we show that ({\bf sVS-SQN}) retains the optimal rate and complexity
statements of ({\bf VS-SQN}).} \jal{Additionally, in (II.i) and (II.ii), we derive rates that do not necessitate knowing either strong convexity or Lipschitzian parameters and rely on employing diminishing steplength sequences.} \\

\noindent (III) {\em Convex problems.} (III.i) ({\bf rVS-SQN}). A {\em
regularized ({\bf VS-SQN}) }  scheme is presented in Section~\ref{sec:4} based on  {the}
({\bf rL-BFGS}) update and admits a rate of
{$\mathcal{O}(1/k^{1-2\varepsilon})$} with an oracle complexity of $\mathcal
O\left({ {\epsilon}^{-{3+\varepsilon\over 1-\varepsilon}}}\right)$, improving
prior rate statements for SQN schemes for smooth convex problems and obviating
prior inductive arguments. In addition, we show that ({\bf rVS-SQN})
produces sequences that converge to the solution in an a.s. sense. Under a
suitable growth property, these statements can be extended to the
state-dependent noise regime.  (III.ii)  ({\bf rsVS-SQN}). {\em A regularized
smoothed $(${\bf VS-SQN}$)$} is presented that leverages the ({\bf rsL-BFGS})
update and allows for developing rate $\mathcal O(k^{-{1\over 3}})$ amongst the
first known rates for SQN schemes for nonsmooth convex programs. Again imposing
a growth assumption allows for weakening the requirements to
state-dependent noise.\\

\noindent (IV) {\em Numerics.} Finally, in Section~\ref{sec:5}, we apply the
({\bf VS-SQN}) schemes on strongly convex/convex and smooth/nonsmooth stochastic
optimization problems. In comparison with  variable sample-size accelerated
proximal gradient schemes, we observe that ({\bf VS-SQN}) schemes compete well and 
outperform gradient schemes for ill-conditioned problems when the number of QN
updates increases. {In addition, SQN schemes do far better in computing sparse solutions, in contrast with standard subgradient and variance-reduced accelerated gradient techniques.} {Finally}, via smoothing, ({\bf VS-SQN}) schemes can be seen to  resolve both nonsmooth and constrained problems.\\

{\bf Notation.} $\mathbb{E}[\bullet]$ denotes the expectation with respect to
the probability measure $\mathbb{P}$ and we refer to \aj{${\nabla_x}
{F}(x,\xi(\omega))$} by ${\nabla_x} {F}(x,\omega)$.  We denote the optimal objective
value (or solution) of  \eqref{main problem} by $f^*$ (or $x^*$) and the set of the optimal
solutions by $X^*$, {which is assumed to be nonempty}. {For a vector $x\in \mathbb R^n$ and a {nonempty} set $X \subseteq\mathbb R^n$, the Euclidean distance of $x$ from $X$ is denoted by ${\rm dist}(x,X)$.} \jal{Throughout the paper, {unless specified otherwise}, $k$ denotes the iteration counter {while} $K$ {represents} the total number of {steps employed in} the proposed methods.}

\section{{Background and Assumptions}}
In Section~\ref{sec:smooth}, we provide some background on smoothing
techniques  {and} then proceed to
define the {\em regularized and smoothed L-BFGS method} or {\bf(rsL-BFGS)}
update rule {employed for generating} the sequence of Hessian approximations
$H_k$ in Section~\ref{sec:rslbfgs}. We conclude this section with a summary of the main assumptions in Section~\ref{sec:assump}.

\subsection{Smoothing of nonsmooth convex functions} \label{sec:smooth}
We begin by defining of {$L$-smoothness} and $(\alpha,\beta)$-{\em smoothability}~\cite{beck17fom}.  
\begin{definition}
A function $f:\mathbb R^n\to \mathbb R$ is said to be $L$-smooth if it {is} differentiable and {there exists an $L > 0$ such that} $\|\nabla f(x)-\nabla f(y) \|\leq L\|x-y\|$ for all $x,y\in \mathbb R^n$. 
\end{definition} 
\begin{definition}{\bf [($\alpha,\beta$)-smoothable~\cite{beck17fom}]} {A
convex function $f: \mathbb{R}^n \to \mathbb{R}$ is 
$(\alpha,\beta)$-smoothable if there exists a convex C$^1$ function
$f_{\eta}: \mathbb{R}^n \to \mathbb{R}$ satisfying the following: (i)
$f_{\eta}(x) \leq f(x) \leq f_{\eta}(x)+\eta \beta$ for all $x$; and (ii)
$f_{\eta}(x)$ is $\alpha/\eta$-smooth.}  \end{definition} 
{Some instances of \usr{$(\alpha,\beta)$-smoothings}~\cite{beck17fom} include the following}: 
\noindent (i) If  $f(x) \triangleq \|x\|_2$ and $f_\eta(x) { \triangleq } \sqrt{\|x\|_2^2 + \eta^2} -
\eta$, then $f$ is $(1,1)-$smoothable function; \noindent (ii) If $f(x) \triangleq
\max\{x_1,x_2, \hdots, x_n\}$ and $f_{\eta}(x) { \triangleq } \eta
\ln(\sum_{i=1}^n e^{x_i/\eta})-\eta \ln(n)$, then $f$ is $(1,\ln(n))$-smoothable;  (iii) If $f$ is a proper, closed, and convex function and 
\begin{align} \label{moreau}
f_\eta(x) \triangleq \min_{u} \ \left\{f(u)+{1\over 2\eta}\|u-x\|^2 \right\},
\end{align} 
(referred to as Moreau proximal smoothing)~\cite{moreau1965proximite}, {then}
$f$ is $(1,B^2)$-smoothable  where 
$B$ denotes a uniform bound on $\|s\|$ where $s \in \partial f(x)$.  

It may be recalled that Newton's method is the de-facto standard for computing a zero
of a nonlinear equation~\cite{nocedal99numerical} while variants such as semismooth Newton methods
have been employed for addressing nonsmooth equations~\cite{facchinei1996inexact, facchinelt1997semismooth}. More
generally, in constrained regimes, such techniques take the form of
interior point schemes which can be viewed as the application of
Newton's method on the KKT system. Quasi-Newton variants of such
techniques can then we applied when second derivatives are either
unavailable or challenging to compute. However, in {constrained} stochastic
regimes, there has been far less available  via {a direct application of} {quasi-Newton} schemes. We consider a smoothing approach that leverages the unconstrained
reformulation of a constrained convex program
where $X$ is a closed and convex set and {${\mathbb I}_{X}(x)$ is an indicator
function}: \begin{align}
\tag{P} \min_x f(x) + {\mathbb I}_X(x).
\end{align}
Then the smoothed problem can be represented as follows:
\begin{align} \tag{P$_{\eta}$} 
\min_x f(x) + {\mathbb I}_{X,\eta}(x),
\end{align}
where ${\mathbb I}_{X,\eta}(\cdot)$ denotes the Moreau-smoothed variant of ${\mathbb I}_X(\cdot)$~\cite{moreau1965proximite} defined as follows. 
\begin{align}
{\mathbb I}_{X,\eta}(x) \triangleq \min_{u \in \mathbb{R}^n} \left\{ {\mathbb I}_X(u) + {1\over 2\eta} \|x-u\|^2\right\} = {1\over 2\eta} d_X^2(x), \usr{d_X(x) \triangleq (x-\mbox{prox}_{\mathbb{I}_X}(x)) = (x-\Pi_X(x))}, 
\end{align}
\usr{$\Pi_{X}(x)\triangleq \mbox{argmin}_{y\in X}\{\|x-y\|^2\}$}, and  the second equality follows from ~\cite[Ex.~6.53]{beck17fom}. Note that \afr{${\mathbb I}_{X,\eta}$} is continuously differentiable with gradient \afr{at $x$} given by $\tfrac{1}{2\eta} \nabla_x (d_X^2(x)) = {1\over \eta} (x - \mbox{prox}_{{\mathbb I}_X}(x)) = {1\over \eta} (x - \Pi_X(x))$. Our interest lies in
reducing the smoothing parameter $\eta$ after every iteration, a class of
techniques {(called {\em iterative smoothing schemes}) that have been applied for solving
} stochastic optimization~\cite{yousefian17smoothing,jalilzadeh2018optimal} and stochastic
variational inequality problems~\cite{yousefian17smoothing}. {Motivated by our recent
work~\cite{jalilzadeh2018optimal}  in which a smoothed variable sample-size
accelerated proximal gradient scheme is proposed for nonsmooth stochastic
convex optimization,} we consider a framework where at iteration $k$, an $\eta_k$-smoothed
function $f_{\eta_k}$ is utilized where the Lipschitz
constant of ${\nabla} f_{\eta_k}(x)$ is $1/\eta_k$. 

\subsection{Regularized and Smoothed L-BFGS Update }\label{sec:rslbfgs}
{When \vvs{the} function $f$ is strongly convex \vvs{but possibly nonsmooth}, we \vvs{adapt} the standard L-BFGS \vvs{scheme} (\vvs{by replacing the true gradient by a sample average}) where the approximation of the inverse Hessian $H_k$ is \vvs{defined} as follows using pairs $(s_i,y_i)$ and $\eta_i$ \vvs{denotes} a smoothing parameter:
\begin{align}\label{lbfgs}
s_i &:= x_{i}-x_{i-1},\\ \notag
\tag{\bf Strongly Convex (SC)} \ {y_i} & := { \sum_{j=1}^{{N_{i-1}}}{\nabla_x} {F}_{{\eta_{i}}}({x_{i}},{\omega}_{{j,i-1}})\over {N_{i-1}}} -{ \sum_{j=1}^{{N_{i-1}}}{\nabla_x} {F}_{{\eta_{{i}}}}({x_{i-1}},{\omega}_{{j,i-1}})\over {N_{i-1}}},\\ \notag
 H_{k,j}&:=\left(\mathbf{I}-\frac{y_is_i^T}{{y_i^Ts_i}}\right)^TH_{k,j-1}\left(\mathbf{I}-\frac{y_is_i^T}{y_i^Ts_i}\right)+\frac{s_is_i^T}{y_i^Ts_i},\quad i :=k-2(m-j), \ 1 \leq j\leq m, \ \forall i
\end{align}
where $H_{k,0}=\frac{s_k^Ty_k}{y_k^Ty_k}\mathbf{I}$.} At iteration $i$, we generate $\nabla_x F_{\eta_i}(x_i,\omega_{j,i-1})$ and $\nabla_x F_{\eta_i}(x_{i-1},\omega_{j,i-1})$, implying there are twice as many sampled gradients generated. 
\vvs{Next,} we discuss how the sequence of approximations $H_k$ is generated
{when} $f$ is merely convex {and} not necessarily smooth. We overlay the
regularized \vvs{L-BFGS}~\cite{mokhtari2014res,yousefian2017stochastic} scheme with a smoothing, refering to the proposed scheme as
the ({\bf rsL-BFGS}) update. As in {({\bf rL-BFGS})}~\cite{yousefian2017stochastic}, {we update the regularization and smoothing parameters $\{\eta_k ,\mu_k\}$ and matrix $H_k$ at alternate {iterations} to keep the secant condition satisfied.} We update the regularization parameter
$\mu_k$ and smoothing parameter $\eta_k$ as follows.
\begin{align}\label{eqn:mu-k}
  \begin{cases}
   \mu_{k}:=\mu_{k-1}, \quad \us{\eta_k := \eta_{k-1}},      &   \text{if } k \text{ is odd}\\
    \mu_{k}<\mu_{k-1}, \quad \us{\eta_k < \eta_{k-1}},  &    {\text{otherwise}.}
  \end{cases}
\end{align}
 We construct the update in terms of $s_i$ and $y_i$  {for convex problems},
\begin{align}\label{equ:siyi-LBFGS}  s_i &:= x_{i}-x_{i-1},\\\notag
\tag{\bf  Convex (C)}\ {y_i} & := { \sum_{j=1}^{{N_{i-1}}}{\nabla_x} {F}_{{\eta_{{i}}^{\delta}}}({x_{i}},{\omega}_{{j,i-1}})\over {N_{i-1}}} -{ \sum_{j=1}^{{N_{i-1}}}{\nabla_x} {F}_{{\eta_{{i}}^\delta}}({x_{i-1}},{\omega}_{{j,i-1}})\over {N_{i-1}}}+{\mu_i^{\bar \delta}}s_i,\end{align} 
where $i$ is odd and $0 < \delta,\bar \delta \leq 1$ are scalars controlling the level of smoothing and regularization in updating matrix $H_k$, respectively. The update policy for $H_k$ is given as follows: 
\begin{align}\label{eqn:H-k}H_{k}:=
  \begin{cases}
    H_{k,m},  &    \text{if } k \text{ is odd} \\
    H_{k-1},  &   \text{otherwise}
  \end{cases}
\end{align}
where $m<n$ (in large scale settings, $m<<n$) is a fixed integer that determines the number of pairs $(x_i,y_i)$ to be used to estimate $H_k$. The matrix $H_{k,m}$, for any $k\geq 2m-1$, is updated using the following recursive formula:
\begin{align}\label{eqn:H-k-m}
H_{k,j}&:=\left(\mathbf{I}-\frac{y_is_i^T}{{y_i^Ts_i}}\right)^TH_{k,j-1}\left(\mathbf{I}-\frac{y_is_i^T}{y_i^Ts_i}\right)+\frac{s_is_i^T}{y_i^Ts_i},\quad i :=k-2(m-j), \quad 1 \leq j\leq m, \quad \forall i,
\end{align}
{where $H_{k,0}=\frac{s_k^Ty_k}{y_k^Ty_k}\mathbf{I}$. It is important to note that our regularized method inherits the computational efficiency from ({\bf L-BFGS}). 
Note that {Assumption \ref{assum:convex2}} holds for our choice of
smoothing. 
%
%

\subsection{Main assumptions}\label{sec:assump}
\vvs{A subset of our results require smoothness of \usr{$F(\cdot,\omega)$} as formalized by the next assumption.}
\begin{assumption}\label{assum:convex-smooth}  
$($a$)$   The function $\usr{F(\cdot,\omega)}$ is {convex and continuously differentiable} over
$\mathbb R^n$ for any $\omega \in \Omega$.  
$($b$)$ The function $f$ is C$^1$ \usr{and $L$-smooth}
	over $\mathbb R^n$. 
\end{assumption}
We introduce the following assumptions of $F(\cdot,\omega)$, parts of which are imposed in a subset of results.
\begin{assumption}\label{assum:as_convex_smooth}
\noindent (a) For every $\omega$, $F(\cdot,\omega)$ is $\tau$-strongly convex.  \noindent (b)  For every $\omega$, $F(\cdot,\omega)$ is $L$-smooth.  (c) $f(x) \triangleq g(x)+h(x)$, where $g(x) \triangleq \mathbb{E}[F(x,\omega)]$, $F(\cdot,\omega)$ is $L$-smooth and $\tau$-strongly convex for every $\omega$, and $h$ is a closed, convex, and proper function. 
\end{assumption}
\noindent In Sections 3.2 (II) and 4.2, we assume the following on the smoothed functions \usr{${F}_{\eta}(\cdot,\omega)$}.
\begin{assumption}\label{assum:convex2}  
For any $\omega \in \Omega$, $\afr{{F}(\cdot,\omega)}$ is $(1, \beta)$ {smoothable}, i.e. \usr{for any $\eta > 0$, there exists $\afr{F_{\eta}(\cdot,\omega)}$ that is $C^1$, convex, ${1\over \eta}$-smooth, and satisfies $F_{\eta}(z,\omega) \leq F(z,\omega)\leq F_{\eta}(z,\omega) +\eta \beta$ for any $z \in \mathbb{R}^n$ and any $\omega \in \Omega$.}
	\end{assumption}
\noindent \afr{Let $\mathcal{F}_k \triangleq \sigma\{x_0, \{\omega_{j,0}\}_{j=1}^{N_0}, \hdots,
 \{\omega_{j,k}\}_{j=1}^{N_k}\}$. Now assume  the following on the conditional second moment on the
sampled gradient (in either the smooth or the smoothed regime) produced by the
stochastic first-order oracle.}
\vvs{\begin{assumption}[{\bf Moment requirements for state-dependent noise}]\label{state noise} 
\begin{enumerate}
\item[]
\item[] (Smooth)  Suppose $\bar{w}_{k,N_k} \triangleq \nabla_x f(x_k) -\tfrac{\sum_{j=1}^{N_k}\nabla_x
{F}(x_k,\omega_{j,k})}{N_k}$.

\item[(S-M)] There exist
$\nu_1, \nu_2>0$ such that $\mathbb E[\|{\bar{w}}_{k,N_k}\|^2\mid \mathcal F_k]\leq
{\tfrac{\nu_1^2\|x_k\|^2+\nu_2^2}{N_k}}$ a.s. for $k \geq 0$.  

\item[(S-B)] For $k \geq 0$, $\mathbb E[{\bar{w}}_{k,N_k}\mid \mathcal F_k] = 0$, a.s.  

\item[] (Nonsmooth)  Suppose $\bar{w}_{k,N_k} \triangleq {\nabla} f_{\eta_k}(x_k) - {\tfrac{\sum_{j=1}^{N_k}\nabla_x {F}_{\eta_k}(x_k,\omega_{j,k})}{N_k}}$, $\eta_k > 0$. 
\item[(NS-M)] There exist $\nu_1,\nu_2>0$ such that $\mathbb E[\|{\bar{w}}_{k,N_k}\|^2\mid \mathcal F_k]\leq {\tfrac{\nu_1^2\|x_k\|^2+\nu_2^2}{N_k}}$ a.s. for $k \geq 0$.  

\item[(NS-B)] For $k \geq 0$, $\mathbb E[{\bar{w}}_{k,N_k}\mid \mathcal F_k] = 0$, a.s. 

\usr{\item[] (Structured smooth) Suppose ${\bar{u}_k}=\nabla_{x}  g(x_{k})-{\tfrac{\aj{\sum_{j=1}^{N_k}}\nabla_x F(x_k,\omega_{j,k})}{N_k}}$.

\item[(SS-M)] There exist $\nu_1,\nu_2$ such that  $\mathbb{E}[\|{\bar u}_{k,N_k}\|^2\mid \mathcal F_k] \leq \usr{\tfrac{\nu_1^2\|x_k\|^2+\nu_2^2}{N_k}}$ a.s. for $k \geq 0$. 

\item[(SS-B)] For $k \geq 0$, $\mathbb{E}[\bar{u}_{k,N_k} \mid \mathcal F_k] = 0$, a.s. 
}
\end{enumerate}
\end{assumption}}
Finally, we impose Assumption~\ref{assump:Hk} on the sequence of
Hessian approximations $\{H_k\}$. These properties follow when
either the regularized update ({\bf rL-BFGS}), \usr{the smoothed update ({\bf sL-BFGS})}, or the regularized smoothed
update ({\bf rsL-BFGS}) \vvs{is} employed (see Lemmas \ref{H_k sc}, \ref{H_k ns sc}, \ref{rLBFGS-matrix}, and \ref{rsLBFGS-matrix}). 
\begin{property}[{\bf Properties of $H_k$}]\label{assump:Hk}
(i) \ $H_k$ is $\mathcal{F}_{k}$-measurable; (ii) \ $H_k$ is symmetric and positive definite and there exist  $\aj{\ulambda_k},{\olambda_k}>0$ such that 
$\aj{\ulambda_k}\mathbf{I} \preceq H_k \preceq {\olambda_k} \mathbf{I}$ {a.s.} for all $k\geq 0.$
\end{property}

\section{Smooth and nonsmooth strongly convex problems}\label{sec:3}
In this section, we {derive the} rate and oracle complexity of the \eqref{rVS-SQN} scheme for  smooth {and nonsmooth} strongly convex problems by
considering the \eqref{VS-SQN} and \eqref{sVS-SQN} schemes.

\subsection{Smooth strongly convex optimization} We begin by considering
~\eqref{main problem} when $f$ is {$\tau-$}strongly convex and $L-$smooth. {Suppose
$\kappa$ is defined as $\kappa \triangleq L/{\tau}$.}  {Throughout, we consider the
({\bf VS-SQN})} scheme, {defined next,  where $H_k$ is generated by the ({\bf
L-BFGS}) scheme. }
\begin{align}\tag{\bf VS-SQN}\label{VS-SQN}
x_{k+1}:=x_k-\gamma_kH_k\frac{\sum_{j=1}^{N_k} \nabla_x F(x_k,{\omega}_{j,k})}{N_k}.
\end{align}
Next, we derive bounds on the eigenvalues of $H_k$ under strong convexity (see {Appendix} for proof).
\begin{lemma}[{\bf Properties of $H_k$ {produced by} 
(L-BFGS)}]\label{H_k sc}
\usr{Suppose Assumptions~\ref{assum:convex-smooth} and \ref{assum:as_convex_smooth} (a,b) hold.} Consider the
\eqref{VS-SQN} method.  Let $s_i$, $y_i$ and $H_k$ be given by 
\eqref{lbfgs}, \usr{where $F_\eta=F$}. 
 Then $H_k$ satisfies \usr{Property} \ref{assump:Hk}{(S)}, with \usr{$\ulambda_k = \ulambda={1\over L(m+n)}$ and $\olambda_k = \olambda={\frac{\left((m+n)L\right)^{n+m-1}}{(n-1)!\tau^{n+m}}}$ for all $k$}.
\end{lemma}}

\begin{proposition}[{\bf Convergence in mean}]\label{thm:mean:smooth:strong}
Consider the iterates generated by the \eqref{VS-SQN} scheme. \usr{Suppose Assumptions~\ref{assum:convex-smooth}, \ref{assum:as_convex_smooth} (a,b),} and   \vvs{\ref{state noise} (S-M), (S-B)} hold.
In addition, suppose $\{N_k\}$ is an increasing sequence. 
Then  the following inequality holds
for all $k\geq 1$, where 
$N_0 \geq {2\nu_1^2\olambda\over \tau^2\ulambda}$ and
$\gamma_k \triangleq {1\over L\olambda}$ for all $k$.
\begin{align*}
 \mathbb E\left[f(x_{k+1})-f(x^*)\right]&
\leq\left(1-{\tau  \ulambda\over L\olambda}\red{+{2\nu_1^2\over L\tau N_0}}\right)\mathbb E\left[f(x_{k})-f(x^*)\right]+\red{ 2 \nu_1^2\|x^*\|^2+\nu_2^2\over 2LN_k}.
\end{align*}
\end{proposition}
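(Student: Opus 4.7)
The plan is to start from the $L$-smoothness of $f$ and apply the descent inequality to $x_{k+1}=x_k-\gamma_k H_k g_k$, where $g_k=\tfrac{1}{N_k}\sum_{j=1}^{N_k}\nabla_x F(x_k,\omega_{j,k})=\nabla f(x_k)-\bar w_{k,N_k}$. This gives
\[
f(x_{k+1})\le f(x_k)-\gamma_k\nabla f(x_k)^T H_k g_k+\tfrac{L}{2}\gamma_k^2\|H_k g_k\|^2.
\]
Taking conditional expectation with respect to $\mathcal F_k$ and using (S-B) to kill the cross term in the expansion $\|H_k g_k\|^2=\|H_k\nabla f(x_k)\|^2-2\nabla f(x_k)^T H_k^2\bar w_{k,N_k}+\|H_k\bar w_{k,N_k}\|^2$ leaves only $\|H_k\nabla f(x_k)\|^2$ plus the conditional second moment of $H_k\bar w_{k,N_k}$.

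The key algebraic trick I would use, and the one step I would double-check first, is that $H_k\succeq 0$ with $H_k\preceq\olambda\mathbf I$ implies $H_k^2\preceq\olambda H_k$, so $\|H_k\nabla f(x_k)\|^2\le\olambda\,\nabla f(x_k)^T H_k\nabla f(x_k)$. Plugging in $\gamma_k=1/(L\olambda)$ the first-order and quadratic gradient terms collapse into
\[
-\gamma_k\nabla f(x_k)^T H_k\nabla f(x_k)+\tfrac{L}{2}\gamma_k^2\olambda\,\nabla f(x_k)^T H_k\nabla f(x_k)=-\tfrac{1}{2L\olambda}\nabla f(x_k)^T H_k\nabla f(x_k),
\]
which is bounded above by $-\tfrac{\ulambda}{2L\olambda}\|\nabla f(x_k)\|^2$ via Assumption~\ref{assump:Hk}(S), and then by $-\tfrac{\tau\ulambda}{L\olambda}(f(x_k)-f^*)$ using the $\tau$-strongly-convex gradient inequality $\|\nabla f(x_k)\|^2\ge 2\tau(f(x_k)-f^*)$.

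For the noise term, $\tfrac{L}{2}\gamma_k^2\mathbb E[\|H_k\bar w_{k,N_k}\|^2\mid\mathcal F_k]$ is bounded by $\tfrac{\olambda^2}{2L\olambda^2}\cdot\tfrac{\nu_1^2\|x_k\|^2+\nu_2^2}{N_k}=\tfrac{\nu_1^2\|x_k\|^2+\nu_2^2}{2LN_k}$ using (S-M). I would then split $\|x_k\|^2\le 2\|x_k-x^*\|^2+2\|x^*\|^2$ and convert the first summand into $(4/\tau)(f(x_k)-f^*)$ via strong convexity. Finally, since $\{N_k\}$ is increasing, bounding $1/N_k\le 1/N_0$ on the coefficient of $(f(x_k)-f^*)$ produces the extra $\tfrac{2\nu_1^2}{L\tau N_0}$ term, while the constant residual is retained with $N_k$ (not $N_0$). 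Taking total expectation yields the stated one-step recursion; the assumption $N_0>2\nu_1^2\olambda/(\tau^2\ulambda)$ is what later guarantees the multiplier is strictly less than $1$, but it is not needed to derive the inequality itself.

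The only step I anticipate any real friction with is the eigenvalue manipulation in the second paragraph: the choice of stepsize $\gamma_k=1/(L\olambda)$ only yields the clean $\tfrac{\tau\ulambda}{L\olambda}$ contraction when the quadratic term is controlled through $H_k^2\preceq\olambda H_k$ rather than through the crude bound $\|H_k\nabla f(x_k)\|^2\le\olambda^2\|\nabla f(x_k)\|^2$; the crude bound gives a weaker (and for small $\ulambda/\olambda$ even invalid) contraction factor. Everything else is bookkeeping: one application of the descent lemma, one use of unbiasedness, one use of Assumption~\ref{assump:Hk}(S), and one use of strong convexity twice (once on $\|\nabla f(x_k)\|^2$, once on $\|x_k-x^*\|^2$).
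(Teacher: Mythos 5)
Your proposal is correct and follows essentially the same route as the paper: descent lemma, unbiasedness to kill the cross terms, the bound $H_k^2\preceq\olambda H_k$ (which the paper writes via the factorization $\nabla f(x_k)^TH_k^{1/2}(-I+\tfrac{L}{2}\gamma_k H_k)H_k^{1/2}\nabla f(x_k)$) together with $\gamma_k=1/(L\olambda)$, then strong convexity applied twice and $1/N_k\le 1/N_0$ on the contraction coefficient only. The subtlety you flag about needing $\|H_k\nabla f(x_k)\|^2\le\olambda\,\nabla f(x_k)^TH_k\nabla f(x_k)$ rather than the crude $\olambda^2\|\nabla f(x_k)\|^2$ bound is precisely the step the paper's $H_k^{1/2}$ manipulation handles.
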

\begin{proof} See Appendix. \end{proof}

\uvs{We now provide a result pivotal in deriving a rate and complexity statements under diminishing steplengths, an avenue that obviates knowing strong convexity and Lipschitzian parameters.}
\begin{lemma}{\em [\cite{polyak1987introduction}, Lemma 5]}\label{poly-rate} Suppose $\{u_k\}$ is a nonnegative sequence, where 
\begin{align}
	u_{k+1} \leq \left(1-\frac{c}{k^s}\right) u_k + \frac{d}{k^t}, \quad k \geq 0 
\end{align} 
where $0 < s < 1$, $s < t$, and $c, d > 0$. Then for $k \geq K$, 
$$ u_k \leq \frac{d}{ck^{t-s}} + o\left(\frac{1}{k^{t-s}}\right). $$ 
\end{lemma}

\begin{theorem}[{\bf Optimal rate and oracle  complexity}]\label{th1}
Consider the iterates generated by the \eqref{VS-SQN} scheme. \usr{Suppose Assumptions~\ref{assum:convex-smooth}, \ref{assum:as_convex_smooth} (a,b) and}   \usr{\ref{state noise} (S-M), (S-B)} hold. 
In addition, suppose $\gamma_k={1\over
L\olambda}$ for all $k$.

(i) If $a\triangleq \left(1-{\tau  \ulambda\over L\olambda}+{2\nu_1^2\over L\tau N_0}\right)$, $N_k \triangleq \lceil N_0\rho^{-k}\rceil$ where $\rho<1$ and $N_0   \geq  {2\nu_1^2\olambda\over \tau^2\ulambda}$. Then for {every $k \geq 1$} and some scalar $C$, the following holds:
$\mathbb E\left[f(x_{K+1})-f(x^*)\right]\leq C(\max\{a,\rho\})^{{K}}.$

\blue{(ii) Suppose $x_{K+1}$ is an $\epsilon$-solution such that $\mathbb{E}[f(x_{{K+1}})-f^*]\leq \epsilon$. 
Then the iteration and oracle complexity of \eqref{VS-SQN}  
are $\mathcal{O}({\kappa^{m+1}} \ln (1/\epsilon))$ 
and 
$\mathcal{O}({\kappa^{m+1} \over\epsilon})$, respectively implying that $\sum_{k=1}^K N_k \leq {\mathcal O\left({ \kappa^{m+n+1}\over \epsilon}\right)}.$  }

\jal{(iii) Suppose $\gamma_k = k^{-s}$ and $N_k = \lceil k^{{p-s}}\rceil$ for every $k$ where $0 < s < 1$ and $s < p$. \jal{In addition, suppose $c \triangleq \tfrac{\ulambda\tau}{2}$ and $d \triangleq { \olambda^2L(2\nu_1^2\|x^*\|^2+\nu_2^2)\over 2}$}. Then for $K$ sufficiently large, we have that 
$$\mathbb E\left[f(x_{k+1})-f(x^*)\right]  \leq \left(\frac{d}{ck^{p}}\right)  + o\left(\frac{1}{k^{p}}\right), \quad k \geq K.
$$}
\end{theorem}
\begin{proof} See Appendix. \end{proof}

We prove a.s. convergence of iterates by using the super-martingale convergence lemma from~\cite{polyak1987introduction}. 
\begin{lemma}[{\bf super-martingale convergence theorem}]\label{almost sure}
Let $\{v_k\}$ be a sequence of nonnegative random variables, where $\mathbb
E{[v_0]}<\infty$ and let $\{{\chi_k}\}$ and $\{\beta_k\}$ be deterministic scalar
sequences such that $0\leq {\chi_k} \leq 1$ and $\beta_k\geq 0$ for all $k\geq
0$, $\sum_{k=0}^\infty{\chi_k}=\infty$, $\sum_{k=0}^\infty \beta_k<\infty $, and
$\lim_{k\rightarrow \infty}{\beta_k\over {\chi_k}}=0$, and $\mathbb
E{[v_{k+1}\mid \mathcal F_k]\leq (1-{\chi_k})v_k+\beta_k}$ a.s. for all $k\geq
0$. Then, $v_k\rightarrow 0$ almost surely as $k\rightarrow \infty$.
\end{lemma}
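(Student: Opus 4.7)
The plan is to establish $v_k\to 0$ almost surely by combining a supermartingale argument giving a.s.\ convergence of $v_k$ to a limit $v_\infty$ with a deterministic argument showing $\mathbb{E}[v_k]\to 0$, and then invoking Fatou's lemma to force $v_\infty=0$.

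First, I would convert $\{v_k\}$ into a genuine nonnegative supermartingale by absorbing the additive perturbation. Define
\[
\tilde v_k \;:=\; v_k+\sum_{i=k}^{\infty}\beta_i,
\]
which is well-defined and integrable since $\sum\beta_i<\infty$ and, inductively from the recursion, $\mathbb{E}[v_k]<\infty$ for every $k$. Using the hypothesis and $1-\chi_k\in[0,1]$,
\[
\mathbb{E}[\tilde v_{k+1}\mid\mathcal F_k]\;\leq\;(1-\chi_k)v_k+\beta_k+\sum_{i=k+1}^{\infty}\beta_i\;=\;\tilde v_k-\chi_k v_k\;\leq\;\tilde v_k.
\]
Thus $\{\tilde v_k\}$ is a nonnegative supermartingale, so by Doob's martingale convergence theorem it converges a.s.\ to a finite limit $\tilde v_\infty$. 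Since $\sum_{i\geq k}\beta_i\to 0$ as $k\to\infty$, this implies $v_k\to v_\infty:=\tilde v_\infty$ a.s.

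Second, I would show $\mathbb{E}[v_k]\to 0$ using the deterministic recursion $u_{k+1}\leq(1-\chi_k)u_k+\beta_k$, where $u_k:=\mathbb{E}[v_k]$. Unrolling from any index $K$ up to $k$ gives
\[
u_{k+1}\;\leq\;u_K\prod_{j=K}^{k}(1-\chi_j)\;+\;\sum_{i=K}^{k}\beta_i\prod_{j=i+1}^{k}(1-\chi_j).
\]
For any $\varepsilon>0$, pick $K$ so that $\beta_i\leq\varepsilon\chi_i$ for all $i\geq K$, which is feasible since $\beta_k/\chi_k\to 0$. Using the telescoping identity $\chi_i\prod_{j=i+1}^{k}(1-\chi_j)=\prod_{j=i+1}^{k}(1-\chi_j)-\prod_{j=i}^{k}(1-\chi_j)$, the second term is bounded by $\varepsilon\bigl[1-\prod_{j=K}^{k}(1-\chi_j)\bigr]\leq\varepsilon$. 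The first term vanishes because $\sum\chi_j=\infty$ combined with $\ln(1-\chi_j)\leq-\chi_j$ forces $\prod_{j=K}^{k}(1-\chi_j)\to 0$. Hence $\limsup_k u_k\leq\varepsilon$, and letting $\varepsilon\downarrow 0$ yields $\mathbb{E}[v_k]\to 0$.

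Finally, I would combine the two steps via Fatou's lemma applied to the nonnegative sequence $\{v_k\}$ converging a.s.\ to $v_\infty$:
\[
0\;\leq\;\mathbb{E}[v_\infty]\;\leq\;\liminf_{k\to\infty}\mathbb{E}[v_k]\;=\;0,
\]
which forces $v_\infty=0$ a.s., completing the proof. The main obstacle is the bookkeeping in the deterministic sublemma, in particular identifying the telescoping that turns $\beta_k/\chi_k\to 0$ into a uniform $\varepsilon$ slack, and verifying $\prod_{j\geq K}(1-\chi_j)\to 0$; once these are in hand, the supermartingale step and Fatou give the conclusion essentially for free.
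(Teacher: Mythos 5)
Your proof is correct and complete. The paper itself does not prove this lemma --- it simply cites it from Polyak's \emph{Introduction to Optimization} --- so there is no in-paper argument to compare against; your three-step route (absorb the perturbation into $\tilde v_k = v_k + \sum_{i\ge k}\beta_i$ to get a nonnegative supermartingale and hence a.s.\ convergence of $v_k$; unroll the deterministic recursion for $\mathbb{E}[v_k]$ using $\beta_k/\chi_k\to 0$, the telescoping identity, and $\sum_k\chi_k=\infty$ to get $\mathbb{E}[v_k]\to 0$; conclude via Fatou) is essentially the standard proof of this result. All the individual steps check out: the supermartingale inequality $\mathbb{E}[\tilde v_{k+1}\mid\mathcal F_k]\le \tilde v_k-\chi_k v_k\le\tilde v_k$ is valid because $1-\chi_k\ge 0$, integrability follows by induction from $\mathbb{E}[v_0]<\infty$, the telescoping bound $\sum_{i=K}^k\chi_i\prod_{j=i+1}^k(1-\chi_j)\le 1$ is right, and $\prod_{j\ge K}(1-\chi_j)\to 0$ follows from $\sum\chi_j=\infty$. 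A slightly different (equally standard) route would invoke the Robbins--Siegmund theorem directly to get $\sum_k\chi_k v_k<\infty$ a.s.\ alongside a.s.\ convergence of $v_k$, which with $\sum_k\chi_k=\infty$ forces the limit to be zero; your expectation-plus-Fatou argument trades that for an elementary deterministic sublemma and is, if anything, more self-contained.
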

\begin{theorem}[{\bf a.s. convergence under strong convexity}]
\usr{Consider the iterates generated by the \eqref{VS-SQN} scheme. \usr{Suppose Assumptions~\ref{assum:convex-smooth}, \ref{assum:as_convex_smooth} (a,b)}, and   \usr{\ref{state noise} (S-M), (S-B)} hold.}
 In addition, suppose $\gamma_k={1\over L\olambda}$ for all $k \geq 0$. Let $\{N_k\}_{k\geq 0}$ be an increasing sequence such that $\sum_{k=0}^\infty {1\over N_k}<\infty$ and $N_0>
{2\nu_1^2\olambda\over \tau^2\ulambda}$. Then $\lim_{k\rightarrow
\infty}f(x_k)=f(x^*)$ almost surely. 
\end{theorem}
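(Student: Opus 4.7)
The plan is to invoke the super-martingale convergence lemma (Lemma~\ref{almost sure}) with $v_k := f(x_k)-f(x^*)$, reusing the one-step conditional recursion already derived in the proof of Proposition~\ref{thm:mean:smooth:strong}. Specifically, inequality~\eqref{strong:smooth} established that, for every $k\geq 0$,
$$
\mathbb E\!\left[f(x_{k+1})-f(x^*)\mid \mathcal F_k\right] \leq \left(1-\tau\gamma_k\ulambda+{2\nu_1^2\over L\tau N_k}\right)\left(f(x_k)-f(x^*)\right)+{2\nu_1^2\|x^*\|^2+\nu_2^2\over 2LN_k}.
$$
Substituting $\gamma_k = 1/(L\olambda)$ and defining
$$
\chi_k \ := \ {\tau\ulambda\over L\olambda}-{2\nu_1^2\over L\tau N_k}, \qquad \beta_k \ := \ {2\nu_1^2\|x^*\|^2+\nu_2^2\over 2LN_k},
$$
this becomes exactly the recursion $\mathbb E[v_{k+1}\mid \mathcal F_k]\leq (1-\chi_k)v_k+\beta_k$ demanded by the lemma.

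The remaining task is to verify the five hypotheses. First, $v_k\geq 0$ is immediate from optimality of $x^*$, and $\mathbb E[v_0]<\infty$ by smoothness of $f$. Second, since $\{N_k\}$ is increasing with $N_0>2\nu_1^2\olambda/(\tau^2\ulambda)$, we have ${2\nu_1^2/(L\tau N_k)}\leq{2\nu_1^2/(L\tau N_0)}<{\tau\ulambda/(L\olambda)}$, which yields $\chi_k>0$; the bound $\chi_k\leq 1$ follows from $\tau\leq L$ together with $\ulambda\leq\olambda$. Third, the sequence $\chi_k$ is in fact uniformly bounded below by the strictly positive constant ${\tau\ulambda/(L\olambda)}-{2\nu_1^2/(L\tau N_0)}$, so $\sum_{k=0}^\infty \chi_k=\infty$. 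Fourth, summability of $\beta_k$ follows directly from the hypothesis $\sum_{k=0}^\infty 1/N_k<\infty$. Finally, $\beta_k\to 0$ since $1/N_k\to 0$, while $\chi_k$ remains bounded below by a positive constant, so $\beta_k/\chi_k\to 0$.

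All hypotheses of Lemma~\ref{almost sure} are met, and we conclude $v_k = f(x_k)-f(x^*)\to 0$ almost surely, as desired. The only mildly delicate point of the argument is the uniform positive lower bound on $\chi_k$, which is precisely what the condition $N_0>2\nu_1^2\olambda/(\tau^2\ulambda)$ is engineered to ensure; otherwise the contraction factor could degenerate and the lemma would fail to apply. No new estimates beyond those already used in Proposition~\ref{thm:mean:smooth:strong} are required.
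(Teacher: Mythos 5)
Your proof is correct and is essentially identical to the paper's: both recall the conditional recursion \eqref{strong:smooth} from Proposition~\ref{thm:mean:smooth:strong} and apply Lemma~\ref{almost sure} with the same choices $v_k=f(x_k)-f(x^*)$, $\chi_k=\tau\gamma_k\ulambda-\tfrac{2\nu_1^2}{L\tau N_k}$, and $\beta_k=\tfrac{2\nu_1^2\|x^*\|^2+\nu_2^2}{2LN_k}$. Your explicit verification of the lemma's hypotheses (in particular $\chi_k\leq 1$ and the uniform positive lower bound on $\chi_k$ coming from $N_0>\tfrac{2\nu_1^2\olambda}{\tau^2\ulambda}$) is a bit more detailed than what the paper writes out, but the argument is the same.
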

\begin{proof} \usr{From Assumption~\ref{assum:as_convex_smooth} (a,b), $f$ is $\tau$-strongly convex and $L$-smooth.} Recall that in \eqref{strong:smooth}, we derived {the following for $k \geq 0$.}
\begin{align*}
\mathbb E\left[f(x_{k+1})-f(x^*)\mid \mathcal F_k\right]\nonumber&\leq\left(1-\tau\gamma_k\ulambda+{2\nu_1^2\over L\tau N_k}\right) (f(x_{k})-f(x^*))+{ 2\nu_1^2\|x^*\|^2+\nu_2^2\over 2LN_k}.
\end{align*}
If $v_k \triangleq f(x_k)-f(x^*)$, $\chi_k \triangleq
\tau\gamma_k\ulambda-{2\nu_1^2\over L\tau N_k}$, $\beta_k \triangleq {
2\nu_1^2\|x^*\|^2+\nu_2^2\over 2LN_k}$, $\gamma_k={1\over L{\olambda}}$, and
$\{N_k\}_{k\geq 0}$ be an increasing sequence such that $\sum_{k=0}^\infty
{1\over N_k}<\infty$  where $N_0> {2\nu_1^2\olambda\over \tau^2\ulambda}$,
(e.g. $N_k\geq {\lceil N_0k^{1+\epsilon}\rceil}$) the requirements of
Lemma~\ref{almost sure} are {seen to be} satisfied. {Hence},
$f(x_k)-f(x^*)\rightarrow 0$ {a.s.} as $k\rightarrow \infty$ and by strong
convexity of $f$, it follows that $\|x_k-x^*\|^2\to 0$ a.s.
\end{proof}

Having presented the variable sample-size SQN method, we now consider the special case where $N_k=1$. Similar to Proposition~\ref{thm:mean:smooth:strong}, the following inequality holds for $N_k=1$:
\begin{align}\label{bound sqn}
&\nonumber \ \mathbb E\left[f(x_{k+1})-f(x^*)\right]\leq f(x_k)-f(x^*)-\gamma_k \left(1-{L\over 2 }\gamma_k\olambda\right)\|H_k^{1/2}\nabla f(x_k)\|^2+{\gamma_k^2\olambda^2L(\nu_1^2\|x_k\|^2+\nu_2^2)\over 2}\\&\nonumber\leq \left(1-2\gamma_k{L^2\over \tau}\olambda(1-{L\over 2}\gamma_k\olambda)\right)\left(f(x_k)-f(x^*)\right)+{\gamma_k^2\olambda^2L(\nu_1^2\|x_k-x^*+x^*\|^2+\nu_2^2)\over 2}\\&
\leq \left(1-2\gamma_k\olambda{L^2\over \tau}+\gamma_k^2\olambda^2{L^3\over\tau}+{2\nu_1^2\gamma_k^2\olambda^2L\over \tau}\right)\left(f(x_k)-f(x^*)\right)+{\gamma_k^2\olambda^2L(2\nu_1^2\|x^*\|^2+\nu_2^2)\over 2},
\end{align}
where the second inequality is obtained by using Lipschitz continuity of $\nabla f(x)$ and the strong convexity of $f(x)$. Next, to obtain the convergence rate of SQN, we use the following lemma~\cite{xie2016si}. 
\begin{lemma}\label{induction}
Suppose $e_{k+1}\leq (1-2a\gamma_k+\gamma_k^2b)e_k+\gamma_k^2c$ for all $k\geq 1$. Let $\gamma_k=\gamma/k$, $\gamma>1/(2a)$, $K\triangleq\lceil {\gamma^2b\over 2a\gamma-1} \rceil+1$ and $Q(\gamma,K)\triangleq \max \left\{{\gamma^2c\over 2a\gamma-\gamma^2b/K-1},Ke_K\right\}$. Then $\forall k\geq K$, $e_k\leq {Q(\gamma,K)\over k}$. 
\end{lemma}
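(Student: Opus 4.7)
The plan is to prove $e_k \le Q(\gamma,K)/k$ by induction on $k \ge K$. The base case $k = K$ is immediate, because the definition of $Q(\gamma,K)$ as a max explicitly includes $K e_K$, so $Q(\gamma,K) \ge K e_K$, i.e.\ $e_K \le Q(\gamma,K)/K$.

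For the inductive step, I would assume $e_k \le Q/k$ (writing $Q := Q(\gamma,K)$) and substitute this together with $\gamma_k = \gamma/k$ into the hypothesized recursion to obtain
\begin{align*}
e_{k+1} \;\le\; \Bigl(1 - \tfrac{2a\gamma}{k} + \tfrac{\gamma^2 b}{k^2}\Bigr)\tfrac{Q}{k} + \tfrac{\gamma^2 c}{k^2}.
\end{align*}
Requiring that the right-hand side be $\le Q/(k+1)$, rearranging using $Q/k - Q/(k+1) = Q/(k(k+1))$, multiplying through by $k^2$, and writing $k/(k+1) = 1 - 1/(k+1)$, the desired bound $e_{k+1} \le Q/(k+1)$ reduces to the one-line algebraic inequality
\begin{align*}
\gamma^2 c \;\le\; Q\Bigl(2a\gamma - 1 + \tfrac{1}{k+1} - \tfrac{\gamma^2 b}{k}\Bigr).
\end{align*}

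To close the induction, I would use monotonicity in $k$: for $k \ge K$ one has $\gamma^2 b/k \le \gamma^2 b/K$ and $1/(k+1) \ge 0$, so the parenthesized factor satisfies
\begin{align*}
2a\gamma - 1 + \tfrac{1}{k+1} - \tfrac{\gamma^2 b}{k} \;\ge\; 2a\gamma - 1 - \tfrac{\gamma^2 b}{K} \;>\; 0,
\end{align*}
where strict positivity is exactly the content of the construction $K = \lceil \gamma^2 b/(2a\gamma - 1)\rceil + 1$ together with $\gamma > 1/(2a)$. Combining this with the other branch of the $\max$ defining $Q$, namely $Q \ge \gamma^2 c/(2a\gamma - \gamma^2 b/K - 1)$, gives $\gamma^2 c \le Q(2a\gamma - 1 - \gamma^2 b/K) \le Q(2a\gamma - 1 + 1/(k+1) - \gamma^2 b/k)$, completing the inductive step.

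I expect the main obstacle to be the algebraic bookkeeping around the residual $\gamma_k^2 b = \gamma^2 b/k^2$ contribution in the recursion: the naive matching ansatz $e_k \sim C/k$ would yield $C = \gamma^2 c/(2a\gamma - 1)$, but the quadratic-in-$\gamma_k$ correction forces a weakened denominator $2a\gamma - 1 - \gamma^2 b/K$. The role of the threshold $K$ is precisely to make this perturbation small enough that the denominator remains positive for all $k \ge K$, while the $K e_K$ branch of the $\max$ defining $Q$ absorbs whatever initial transient is required so that the base case of the induction is valid.
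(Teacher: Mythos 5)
The paper does not actually prove this lemma: it is imported verbatim from the cited reference \cite{xie2016si}, so there is no in-paper argument to compare yours against. On its own terms, your induction is the standard proof of this Chung/Polyak-type recursion, and the algebra is right: the base case is handled by the $Ke_K$ branch of the $\max$, the reduction of $e_{k+1}\le Q/(k+1)$ to $\gamma^2 c\le Q\bigl(2a\gamma-1+\tfrac{1}{k+1}-\tfrac{\gamma^2 b}{k}\bigr)$ is exactly correct, and the definition of $K$ is precisely what makes $2a\gamma-1-\gamma^2 b/K>0$ so that the other branch of the $\max$ closes the step.

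There is, however, one step that can genuinely fail: substituting the inductive hypothesis $e_k\le Q/k$ into $(1-2a\gamma_k+\gamma_k^2 b)e_k$ preserves the inequality only when the coefficient $1-2a\gamma/k+\gamma^2 b/k^2$ is nonnegative, and the stated hypotheses do not guarantee this for all $k\ge K$. Concretely, take $a=1$, $b=0.01$, $c=1$, $\gamma=10$: then $K=2$, the coefficient at $k=2$ equals $-8.75$, and choosing $e_1=e_2=0$ (consistent with the recursion, which is only an upper bound) allows $e_3=\gamma_2^2c=25$, while $Q/3=\tfrac{100}{3\cdot 18.5}\approx 1.8$ --- so the lemma as stated is actually false without an added assumption. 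This is a defect inherited from the cited statement rather than an error peculiar to your write-up, but a careful proof must either assume $1-2a\gamma_k+\gamma_k^2 b\ge 0$ for $k\ge K$ (under which your argument is complete as written) or enlarge $K$ so that this holds; you should state that hypothesis explicitly rather than pass over it.
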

Now from inequality \eqref{bound sqn} and Lemma \ref{induction},  the following proposition follows.
\begin{proposition}[{\bf Rate of convergence of SQN with $N_k = 1$}]
\usr{Consider the iterates generated by the \eqref{VS-SQN} scheme. \usr{Suppose Assumptions~\ref{assum:convex-smooth}--\ref{assum:as_convex_smooth},} \usr{and}   \vvs{\ref{state noise} (S-M),(S-B)} hold.} 
Let $a={L^2\olambda\over \tau}$, $b={\olambda^2L^3+2\nu_1^2\olambda^2L\over \tau}$ and $c={\olambda^2L(2\nu_1^2\|x^*\|^2+\nu_2^2)\over 2}$. 
Then, $\gamma_k={\gamma\over k}$, $\gamma>{1\over L \olambda}$ and $N_k=1$ the following holds:
$\mathbb E\left[f(x_{k+1})-f(x^*)\right]\leq {Q(\gamma,K)\over k}$,
where $Q(\gamma,K)\triangleq \max \left\{{\gamma^2c\over 2a\gamma-\gamma^2b/K-1},K(f(x_K)-f(x^*))\right\}$ and $K\triangleq\lceil {\gamma^2b\over 2a\gamma-1} \rceil+1$. 
\end{proposition}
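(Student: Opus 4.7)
The proposition is essentially a direct consequence of two ingredients that are already in hand: the per-iteration descent inequality \eqref{bound sqn} derived above and the deterministic recursion bound of Lemma \ref{induction}. My plan is therefore to cast the recursion \eqref{bound sqn} into the template of Lemma \ref{induction} and invoke it, after a short check that the stepsize hypothesis $\gamma > 1/(L\olambda)$ of the proposition implies the hypothesis $\gamma > 1/(2a)$ of the Lemma.

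First, I would take unconditional expectations in \eqref{bound sqn} and define $e_k \triangleq \mathbb{E}[f(x_k)-f(x^*)]$. Reading off the coefficients of the three terms on the right-hand side and substituting the shorthand $a = L^2\olambda/\tau$, $b = (\olambda^2 L^3 + 2\nu_1^2 \olambda^2 L)/\tau$, and $c = \olambda^2 L(2\nu_1^2\|x^*\|^2 + \nu_2^2)/2$ stated in the proposition, the inequality \eqref{bound sqn} becomes exactly
\[
e_{k+1} \;\leq\; (1 - 2a\gamma_k + \gamma_k^2 b)\,e_k + \gamma_k^2 c,
\]
which is the hypothesis of Lemma \ref{induction}.

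Second, I would verify that the stepsize choice $\gamma_k = \gamma/k$ with $\gamma > 1/(L\olambda)$ satisfies $\gamma > 1/(2a)$, as required by Lemma \ref{induction}. Since $1/(2a) = \tau/(2L^2\olambda)$ and strong convexity plus $L$-smoothness of $f$ forces $\tau \leq L$, we have $\tau/(2L^2\olambda) \leq 1/(2L\olambda) < 1/(L\olambda) < \gamma$, so the stepsize hypothesis of the lemma is verified. With $K \triangleq \lceil \gamma^2 b/(2a\gamma-1)\rceil + 1$ and $Q(\gamma,K)$ as defined in the statement, Lemma \ref{induction} then yields $e_k \leq Q(\gamma,K)/k$ for all $k \geq K$, which is the claim.

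There is essentially no obstacle: the only non-mechanical step is the one-line translation of the coefficient in \eqref{bound sqn} into $a,b,c$, and the short inequality $\tau/(2L^2\olambda) < 1/(L\olambda)$ used to reconcile the two stepsize conditions. Everything else is bookkeeping inherited from the two previously established results.
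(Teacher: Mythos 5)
Your proposal is correct and follows exactly the route the paper takes: the paper's entire proof is the one-line remark that the proposition follows from inequality \eqref{bound sqn} and Lemma \ref{induction}, and you have simply filled in the coefficient identification $a,b,c$ and the stepsize check $\gamma>1/(L\olambda)\Rightarrow\gamma>1/(2a)$ (via $\tau\leq L$) that the paper leaves implicit.
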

\blue{\begin{remark}
It is worth emphasizing that the
proof techniques, while aligned with avenues adopted in~\cite{byrd12,FriedlanderSchmidt2012,bollapragada2018progressive}, {extend results}
in~\cite{bollapragada2018progressive} to the regime of state-dependent
noise~\cite{FriedlanderSchmidt2012}. We also observe that  in the analysis of deterministic/stochastic first-order methods,
any non-asymptotic rate statements rely on utilizing problem parameters (e.g. the strong convexity modulus, Lipschitz constants, etc.). \uvs{Similarly, in the context of 
QN} methods, obtaining non-asymptotic bounds also requires $\ulambda$ and $\olambda$ (cf.~\cite[Theorem 3.1]{bollapragada2018progressive},
\cite[Theorem 3.4]{berahas2016multi}, and \cite[Lemma~2.2]{wang2017stochastic})
{since the impact of $H_k$ needs to be addressed.} One {avenue for weakening the dependence on such parameters lies} 
in using line search schemes. However when the problem is
expectation-valued, the steplength arising from a line search leads to a
{dependence between the} steplength (which is now random) and the direction.
Consequently, standard analysis fails and one has to appeal to more refined analysis
(cf.~\cite{iusem2017variance,cartis18global,paquette20stochastic,shashaani18astro}). {This remains the focus of future work.}
\end{remark}}
\jal{\begin{remark}
Note that Assumption \ref{assum:as_convex_smooth}, $F(\cdot,\omega)$ is $L$-smooth and $\tau$-strongly convex for every $\omega$ and is commonly employed in stochastic quasi-Newton schemes; cf.~\cite{berahas2016multi,byrd2016stochastic,mokhtari2015global}. \uvs{This is necessitated by the need to provide bounds on the eigenvalues of $H_k$.} 
{While deriving such Lipschitz constants is challenging, there are instances when this is possible. 

\noindent (i) Consider the following problem. 
$$ \min_{x \in X} f(x) \triangleq  \mathbb{E}\left[\tfrac{1}{2} x^TQ(\omega) x + c^Tx\right]. $$ If one has access to the form of $Q(\omega)$, then by leveraging Jensen's inequality and under suitable integrability requirements, one may be able to prove that $f$ is $L$-smooth. More generally, if $\nabla_x f(x) = \mathbb{E}[\nabla_x f(x,\omega)]$ and $\nabla_x f(\cdot,\omega)$ is $L(\omega)$-Lipschitz where $L(\omega)$ has finite mean given by $L$, then one may conclude that $f$ is $L$-smooth. We may either derive $L$ (if we have access to the structure of $L(\omega)$) or postulate the existence of $L$ if $L(\omega)$ has finite expectation.}  

\noindent (ii) Suppose we consider the setting when $F(\cdot,\xi)$ is the $\ell_2$-squared loss function; i.e. 
$F(x,\xi_i) \triangleq \frac{1}{2}(a_i^Tx - b_i)^2$ where $a_i \in
\mathbb{R}^{n}$ and $b_i \in \mathbb{R}$ denote the $i^{th}$ pair of the input
and output data, respectively, and $\xi_i \triangleq (a_i; b_i) \in
\mathbb{R}^{n+1}$. We obtain that $\nabla F(x,\xi_i) = (a_i^Tx - b_i)a_i =
(a_ia_i^T)x - b_ia_i.$ This implies that $\nabla F(\cdot,\xi_i)$ is a Lipschitz
continuous mapping with the parameter $L_{\xi_i}:=\|a_ia_i^T\|_2$. If 
$\xi$ is assumed to have finite support based on an empirical distribution, we have that
$$\nabla f(x) = \mathbb{E}\left[\nabla F(x,\xi)\right] =
\frac{1}{N}\sum_{i=1}^N \nabla F(x,\xi_i).$$
From the preceding relation and that every sample path function \uvs{ is $L(\xi)$-smooth}, for any $x,y \in \mathbb{R}^n$:
\begin{align*}\|\nabla f(x) -\nabla f(y)\|_2 &= \left\|\mathbb{E}\left[\nabla F(x,\xi_i)-\nabla F(y,\xi_i)\right]\right\|_2 \leq \frac{\sum_{i=1}^N\|\nabla F(x,\xi_i)-\nabla F(y,\xi_i)\|_2}{N} \\&\leq \frac{\sum_{i=1}^N \|a_ia_i^T\|_2\|x-y\|_2}{N}.\end{align*}
This implies that $f(x) \triangleq \mathbb{E}\left[F(x,\xi)\right]$ has Lipschitz gradients with the parameter $L\triangleq \frac{1}{N}\sum_{i=1}^N \|a_ia_i^T\|_2$. We, however, note that the computation of $L$ may become costly in cases where either $N$ or $n$ are massive. This can be addressed to some extent by deriving an upper bound on $L$ as follows: 
$$L\triangleq \frac{\sum_{i=1}^N \|a_ia_i^T\|_2}{N} \leq \frac{\sum_{i=1}^N \|a_i\|_{\infty}\|a_i\|_2}{N}  \leq \frac{\max_{i}\|a_i\|_{\infty}}{N}\sum_{i=1}^N \|a_i\|_2 \leq \fythird{\frac{\|A\|_F\max_{i}\|a_i\|_{\infty}}{\sqrt{N}}},$$
where $A \in \mathbb{R}^{N \times n}$ is defined as $A = [a_1^T;\ldots;a_N^T]$ and $\|A\|_F$ denotes the Frobenius norm of $A$.\\

\end{remark}}
\vspace{-0.2in}
\subsection{Nonsmooth strongly convex optimization} 
Consider \eqref{main problem} where $f$ is a strongly convex but
nonsmooth function. In this section, we focus on the case where $f(x) \triangleq h(x) + g(x)$,  $h$ is a deterministic, closed, convex, and proper function, \afr{$g$}  is $L-$smooth and strongly convex, $F(\afr{\cdot},\omega)$ is a convex function for every $\omega$}, \afr{where} $g(x) \triangleq \mathbb{E}[F(x,{\omega})]$. We begin by noting that the Moreau envelope of $f$, denoted by \afr{$f_{\eta}$} and defined as \eqref{moreau}, retains both the minimizers of $f$ as well as its strong convexity as captured
by the following result based on \cite[Lemma~2.19]{planiden2016strongly}.
\begin{lemma}\label{feta}
Consider a convex, closed, and proper function $f$ and its Moreau envelope $f_{\eta}$. Then the following hold:
(i) $x^*$ is a minimizer of $f$ over $\mathbb{R}^n$ if and only if $x^*$ is a minimizer of $f_{\eta}(x)$; (ii) $f$ is $\sigma$-strongly convex on $\mathbb{R}^n$ if and only if $f_{\eta}$ is $\tfrac{\sigma}{\eta\sigma+1}$-strongly convex on $\mathbb{R}^n$.
\end{lemma}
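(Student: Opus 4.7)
For part (i), my plan is to exploit two elementary facts about the Moreau envelope \eqref{moreau}: first, $f_{\eta}(x) \ge \inf_{u} f(u)$ for every $x$, since the infimum defining $f_{\eta}(x)$ is bounded below by $\inf_{u} f(u)$; and second, $f_{\eta}(x) \le f(x)$ (take $u = x$). For the forward direction, if $x^{*}$ minimizes $f$, then $f_{\eta}(x) \ge f(x^{*}) \ge f_{\eta}(x^{*})$ for every $x$, so $x^{*}$ minimizes $f_{\eta}$. For the reverse direction, at any minimizer $x^{*}$ of $f_{\eta}$, $C^{1}$-ness of $f_{\eta}$ together with the gradient formula $\nabla f_{\eta}(x^{*}) = \tfrac{1}{\eta}(x^{*} - \mathrm{prox}_{\eta f}(x^{*}))$ forces $x^{*} = \mathrm{prox}_{\eta f}(x^{*})$; the first-order optimality condition for the proximal subproblem then yields $0 \in \partial f(x^{*})$, so $x^{*}$ minimizes $f$.

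For part (ii), the cleanest route is through Fenchel conjugation. Since $f_{\eta} = f \,\square\, q_{\eta}$ with $q_{\eta}(z) = \tfrac{1}{2\eta}\|z\|^{2}$, I would take conjugates to obtain $(f_{\eta})^{*} = f^{*} + q_{\eta}^{*} = f^{*} + \tfrac{\eta}{2}\|\cdot\|^{2}$, using that $f$ is closed, convex, and proper and that infimal convolution dualizes to addition. I would then invoke the standard duality between strong convexity and smoothness: $f$ is $\sigma$-strongly convex iff $f^{*}$ is $(1/\sigma)$-smooth, and $f_{\eta}$ is $\sigma'$-strongly convex iff $(f_{\eta})^{*}$ is $(1/\sigma')$-smooth, where $\sigma' = \sigma/(\eta\sigma + 1)$ and hence $1/\sigma' = 1/\sigma + \eta$. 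This reduces the claim to the equivalence: $f^{*}$ is $(1/\sigma)$-smooth if and only if $f^{*} + \tfrac{\eta}{2}\|\cdot\|^{2}$ is $(1/\sigma + \eta)$-smooth.

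The forward implication is immediate, since adding $\tfrac{\eta}{2}\|\cdot\|^{2}$ inflates the gradient Lipschitz constant by exactly $\eta$. The reverse implication is where I expect the only real subtlety, and I would handle it via the descent inequality. Writing the $L$-smoothness inequality for $g := f^{*} + \tfrac{\eta}{2}\|\cdot\|^{2}$ with $L = 1/\sigma + \eta$, substituting $\nabla g(x) = \nabla f^{*}(x) + \eta x$, and applying the identity $\tfrac{\eta}{2}\|x\|^{2} - \tfrac{\eta}{2}\|y\|^{2} + \eta\langle x, y - x\rangle = -\tfrac{\eta}{2}\|y - x\|^{2}$, the quadratic pieces cancel neatly to leave the descent inequality for $f^{*}$ with constant $L - \eta = 1/\sigma$. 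Combined with convexity of $f^{*}$ (automatic since $f$ is convex, closed, and proper), this gives $(1/\sigma)$-smoothness of $f^{*}$ and hence $\sigma$-strong convexity of $f$, closing the equivalence. I would record the final statement by directly citing \cite[Lemma~2.19]{planiden2016strongly}, with the conjugate-based derivation above serving as justification tailored to our setting.
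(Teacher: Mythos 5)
Your argument is correct. The paper itself supplies no proof of this lemma --- it simply cites \cite[Lemma~2.19]{planiden2016strongly} --- so there is no in-paper argument to compare against; what you have written is a legitimate self-contained justification. Part (i) is handled exactly as one would expect: the sandwich $\inf_u f(u) \le f_\eta(x) \le f(x)$ gives the forward direction, and the stationarity of the $C^1$ envelope at a minimizer, $0 = \nabla f_\eta(x^*) = \tfrac{1}{\eta}(x^* - \mbox{prox}_{\eta f}(x^*))$, combined with the prox optimality condition gives the converse; all of this is available because $f$ is closed, proper, and convex, so the prox is single-valued and $f_\eta$ is finite and differentiable. Part (ii) via conjugation is also sound: $(f \,\square\, q_\eta)^* = f^* + \tfrac{\eta}{2}\|\cdot\|^2$ holds under your hypotheses, the strong-convexity/smoothness duality applies to both $f$ and the (finite, hence closed) function $f_\eta$, and the arithmetic $1/\sigma' = 1/\sigma + \eta$ matches the stated modulus. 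The one step worth making explicit in the reverse implication is that you first get differentiability of $f^*$ for free (as $g - \tfrac{\eta}{2}\|\cdot\|^2$ with $g$ differentiable) before invoking the equivalence, for convex functions, of the descent inequality with constant $C$ and $C$-Lipschitz continuity of the gradient; with that noted, the cancellation of the quadratic terms does exactly what you claim. A marginally shorter route for the converse of (i), avoiding differentiability altogether, is to observe that $\inf f_\eta = \inf f$ and that $f(\mbox{prox}_{\eta f}(x^*)) + \tfrac{1}{2\eta}\|\mbox{prox}_{\eta f}(x^*) - x^*\|^2 = \inf f$ forces both summands to be extremal, but this is a stylistic point rather than a gap.
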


Consequently, it suffices to minimize the (smooth) Moreau envelope with a
{\em fixed} smoothing parameter $\eta$, as shown in the next result. For
notational simplicity, we choose $m=1$ but the rate results hold for $m>1$ and  define $f_{N_k}(x) \triangleq h(x) + \tfrac{1}{N_k}\sum_{j=1}^{N_k}
F(x,\aj{\omega_{j,k}})$.
{Throughout this subsection, we consider the smoothed variant of
\eqref{VS-SQN}, referred to the \eqref{sVS-SQN} scheme, defined next, where
$H_k$ is generated by the ({\bf sL-BFGS})} update rule, $\nabla_x
f_{\eta_k}(x_k)$ denotes the gradient of the Moreau-smoothed function,
{given by $\tfrac{1}{\eta_k}(x_k- \mbox{prox}_{\eta_k,f}(x_k))$, while
$\nabla_{x} f_{\eta_k,N_k}(x_k) $, the gradient of the Moreau-smoothed and
sample-average function $f_{N_k}(x)$, is defined as $\tfrac{1}{\eta_k}(x_k-
\mbox{prox}_{\eta_k,f_{N_k}}(x_k))$ {and} ${\bar w_k} \triangleq \nabla_x
{f_{\eta_k,N_k}(x_k)-\nabla_x
{f_{\eta_k}(x_k)}}$. Consequently the update rule for $x_k$ becomes the following}.  
\begin{align}\tag{\bf sVS-SQN}\label{sVS-SQN}
x_{k+1}:=x_k-\gamma_kH_k{\left(\nabla_{x} f_{\eta_k}(x_k)+{\bar w_k}\right)},
\end{align}
\jal{and the update rule of L-BFGS that we use in this section is as follows:
\begin{align*}
s_i &:= x_{i}-x_{i-1},
\quad {y_i} := \nabla_x f_{\eta_i,N_{i-1}}(x_i)- \nabla_x f_{\eta_i,N_{i-1}}(x_{i-1}),\\ \notag
 H_{k,j}&:=\left(\mathbf{I}-\frac{y_is_i^T}{{y_i^Ts_i}}\right)^TH_{k,j-1}\left(\mathbf{I}-\frac{y_is_i^T}{y_i^Ts_i}\right)+\frac{s_is_i^T}{y_i^Ts_i},\quad i :=k-2(m-j), \ 1 \leq j\leq m, \ \forall i.
\end{align*}}
At each iteration of \eqref{sVS-SQN}, {the error in the gradient is captured by  ${\bar{w}}_k$.}
We show that ${\bar{w}}_k$ satisfies Assumption \ref{state noise} (NS) by utilizing the following {assumption on the gradient of function}.

\begin{assumption}\label{Lip_moreau}
Suppose there exists {$\nu_1,\nu_2>0$} such that for all $i\geq 1$, $\mathbb{E}[\|{\bar u}_{{k}}\|^2\mid \mathcal F_k] \leq {\tfrac{\nu_1^2\|x_k\|^2+\nu_2^2}{N_k}}$ holds almost surely, where {${\bar{u}_k}=\nabla_{x}  g(x_{k})-{\tfrac{{\sum_{j=1}^{N_k}}\nabla_x F(x_k,\omega_{j,k})}{N_k}}$}. 

\end{assumption}
\begin{lemma}\label{bound sub}
\usr{Suppose Assumptions~\ref{assum:convex-smooth}, \ref{assum:as_convex_smooth} (a,c), and ~\ref{state noise} hold.}  Let $f_{\eta}$ {denote the Moreau smoothed approximation of $f$} {and $\eta<2/L$}. Then, \usr{$\mathbb E[\|\bar w_k\|^2\mid \mathcal F_k]\leq \tfrac{4(\nu_1^2\|x_k\|^2+\nu_2^2)}{{\usr{\tau^2 \eta^2}N_k}}$ for all $k\geq 0$.} 
\end{lemma}
\begin{proof}
\afr{We begin by noting that $f_{N_k}(x)$ is convex.} 
Consider the two problems:
\begin{align}
	\mbox{prox}_{\eta, f}(x_k) & \triangleq \mbox{arg} \min_{u} \left[ f(u) + {1\over 2\eta} \|x_k-u\|^2\right], \label{prox1} \\  	
\mbox{prox}_{\eta, f_{N_k}}(x_k) & \triangleq \mbox{arg} \min_{u} \left[ f_{N_k}(u) + {1\over 2\eta} \|x_k-u\|^2\right]. \label{prox2}
\end{align}
{Suppose  $x^*_{{k}}$ and $x^*_{N_k}$ denote
the optimal unique} solutions of \eqref{prox1} and \eqref{prox2}, respectively. From the definition of Moreau smoothing, {it follows that}
\begin{align*}
 \bar{w}_k & = \nabla_x f_{\usr{\eta},N_k}(x_k) - \nabla_x f_{\usr{\eta}}(x_k) = {1\over \usr{\eta}} (x_k - \mbox{prox}_{\usr{\eta}, f_{N_k}}(x_k)) - 
{1\over \usr{\eta}} (x_k - \mbox{prox}_\aj{\usr{\eta}, f}(x_k)) \\
	&  = \mbox{prox}_{\usr{\eta},f_{N_k}}(x_k) - \mbox{prox}_{\usr{\eta}, f} (x_k) = {1\over \eta} (x_{N_k}^* - x_k^*),\end{align*}  
{which implies} $\mathbb E[\|\bar w_k\|^2\mid \mathcal F_k]={1\over \eta^2}\mathbb E[\|x^*_{{k}}-x^*_{N_k}\|^2\mid \mathcal F_k]$. \afr{The following inequalities are a consequence of invoking strong convexity of $f$, convexity of $f_{N_k}$} and the  optimality conditions of \eqref{prox1} and \eqref{prox2}: 
\begin{align*}
	f(x^*_{N_k}) + {1\over 2\eta} \|x^*_{N_k} - x_k\|^2 &\geq f(x^*_{{k}}) + {1\over 2\eta} \|x^*_{{k}} - x_k\|^2 + {1\over 2} \left(\tau + {1\over \eta}\right) \|x^*_{{k}}-x^*_{N_k}\|^2, \\  		 
	f_{N_k}(x^*_{{k}}) + {1\over 2\eta} \|x^*_{{k}} - x_k\|^2 &\geq f_{N_k}(x^*_{N_k})+ {1\over 2\eta} \|x^*_{N_k} - x_k\|^2\afr{+ {1\over 2\eta} \|x^*_{N_k}-x^*_{{k}}\|^2.}
\end{align*}  		 
Adding the above inequalities, we have that 
\begin{align*}
	f(x^*_{N_k}) -f_{N_k}(x^*_{N_k}) + f_{N_k}(x_{{k}}^*)-f(x_{{k}}^*)  &\geq \left(\afr{\tau\over 2}+{1\over \eta}\right)\|x^*_{N_k}-x_{{k}}^*\| ^2. 
\end{align*}
 From the definition of $f_{N_k}(x_k)$ and \afr{$\beta \triangleq {\tau\over 2}+\tfrac{1}{\eta}$}, 
and \afr{by the convexity of $F(\cdot,\omega)$ in $x$ for a.e. $\omega$ and  and $L-$smoothness of function $g$}, {we may prove} the following.
 \begin{align*}
\beta \|x^*_{{k}}&-x^*_{N_k}\|^2  \leq {f(x^*_{N_k}) -f_{N_k}(x^*_{N_k}) + f_{N_k}(x_{{k}}^*)-f(x_{{k}}^*)}
			 \\ 
& = \frac{\sum_{j=1}^{N_k} (g(x^*_{N_k})-F(x^*_{N_k},\aj{\omega_{j,k}}))}{N_k} + \frac{\sum_{j=1}^{N_k} (F(x^*_{{k}},\aj{\omega_{j,k}})-g(x^*_{{k}}))}{N_k}  \\
	& \leq \frac{\sum_{j=1}^{N_k} \left(g(x^*_k) +\nabla_{x}  g(x^*_{k})^T(x^*_{N_k}-x^*_k)+ \tfrac{L}{2}\|x^*-x^*_{N_k}\|^2 - F(x^*_k,\aj{\omega_{j,k}}) - \nabla_x F(x^*_{k},\aj{\omega_{j,k}})^T(x^*_{N_k}-x^*_k)\right)}{N_k}\\
& + \frac{\sum_{j=1}^{N_k} (F(x^*_{{k}},\aj{\omega_{j,k}})-g(x^*_{{k}}))}{N_k}  
	 = \frac{\sum_{j=1}^{N_k} (\nabla_{x}  g(x^*_{k})-\nabla_x F(x^*_k,\aj{\omega_{j,k}}))^T(x^*_{N_k}-x^*_k)}{N_k}+ \tfrac{L}{2}\|x^*-x^*_{N_k}\|^2\\
	& = {\bar{u}_k^T}(x^*_{N_k}-x^*_k)+ \tfrac{L}{2}\|x^*_{{k}}-x^*_{N_k}\|^2.
\end{align*} 
Consequently, by taking conditional expectations {and using Assumption \ref{Lip_moreau}}, we have the following.
\begin{align*}
{\mathbb{E}[\beta \|x^*_{{k}}-x^*_{N_k}\|^2 \mid \mathcal{F}_k]} & =  
	 \mathbb E[{\bar{u}_k^T}(x^*_{N_k}-x^*_k)\mid \mathcal F_K]+ \tfrac{L}{2}\mathbb E[\|x^*_{{k}}-x^*_{N_k}\|^2\mid \mathcal F_k]\\
	& \leq \tfrac{1}{\tau} \mathbb{E}[\|{\bar{u}_k}\|^2 \mid \mathcal{F}_k] + (\tfrac{\tau}{4}+\tfrac{L}{2})\mathbb{E}[\|x^*_k-x^*_{N_k}\|^2 \mid \mathcal{F}_k]\\
\implies {\mathbb{E}[ \|x^*_{{k}}-x^*_{N_k}\|^2 \mid \mathcal{F}_k] }&  \leq \tfrac{4}{{\tau^2}} \mathbb{E}[\|{\bar u_k}\|^2 \mid \mathcal{F}_k]  \leq \tfrac{4}{{\tau^2}} \tfrac{\usr{\nu_1^2\|x_k\|^2 + \nu_2^2}}{N_k}, \mbox{ if $\eta < 2/L$.} 
\end{align*}
We may then conclude that \afr{$\mathbb E[\|\bar w_{k,N_k}\|^2\mid \mathcal F_k]\leq \tfrac{4\usr{(\nu_1^2\|x_k\|^2+\nu_2^2)}}{\eta^2\tau^2N_k}$}.
\end{proof}

{Next, we derive bounds on the eigenvalues of $H_k$ under strong convexity (similar to  Lemma~\ref{H_k sc})}.
\begin{lemma}[{\bf Properties of  {$H_k$ produced by} 
 (sL-BFGS)}]\label{H_k ns sc}
\usr{Suppose Assumptions~\ref{assum:convex-smooth} and \ref{assum:as_convex_smooth} (a,c) hold.} 
Let $s_i$, $y_i$ and $H_k$ be given by \eqref{lbfgs}.
 Then $H_k$ satisfies \usr{Property} \ref{assump:Hk} 
with ${\ulambda_k}={\eta_k\over (m+n)}$ and {${\olambda_k}=\left({n+m\over \eta_k}\right)^{m+n-1}\left({1\over (n-1)!\tau^{n+m}}\right)$}.
\end{lemma}
We now show that under Moreau smoothing, {a} linear rate of convergence is retained. 

\begin{theorem}\label{moreau_strong}
Consider the iterates generated by the \eqref{sVS-SQN} scheme \usr{where $\eta_k = \eta$ for all $k$. Suppose Assumptions~\ref{assum:convex-smooth}, \ref{assum:as_convex_smooth}(a,c), ~\ref{state noise}(SS-M, SS-B) {and \ref{Lip_moreau} hold}.} 
 In addition, suppose $m = 1$, {$\eta\leq \min\{2/L,(\usr{8}(n+1)^2/\tau^2)^{1/3}\}$}, $d \triangleq 1-{\tau^2\eta^3\over {\usr{8}}(n+1)^2(1+\eta\tau)}$, $N_k\triangleq \lceil N_0{q^{-k}}\rceil$ for all $k\geq 1$, $N_0 \geq \frac{5(1+\eta \tau)^2}{\tau^4} \usr{(\nu_1^2)\over \eta \gamma \ulambda}$ $\gamma\triangleq{\tau\eta^2\over
{4}(1+n) }$, $c_1 \triangleq \max\{q,d\}$, and $c_2 \triangleq \min\{q,d\}$. (i) \usr{(a) Suppose $c_1 > c_2$.}  Then $\mathbb E[\|x_{{k}+1}-x^*\|^2]
 \leq D c_1^{{k}+1}$ for all $k$ where
where \usr{$$  D  \triangleq \left({2\mathbb E[f_\eta(x_0)-f_\eta(x^*)](1+\eta\tau)\over \tau}\right) \usr{ \ + \ } \left(\usr{10(1+\eta \tau)(2\nu_1^2\|x^*\|^2+\nu_2^2)\over 4\tau^3\eta N_0(\usr{c_1}-\usr{c_2})}\right). $$}
\usr{(b) Suppose $c_1 = c_2$. Then 
$\mathbb E[\|x_{K+1}-x^*\|^2]\leq
D \tilde{d}^{K+1}$  where 
$\tilde{d} \in (d,1)$, $\tilde{D} > \tfrac{1}{\ln(\tilde{d}/d)^e}$, and 
\usr{$$  D  \triangleq \left({2\mathbb E[f_\eta(x_0)-f_\eta(x^*)](1+\eta\tau)\over \tau}\right) \usr{ \ + \ } \left(\usr{10(1+\eta \tau)(2\nu_1^2\|x^*\|^2+\nu_2^2)\tilde{D}\over 4\tau^3\eta N_0d }\right). $$
}}
(ii) \blue{Suppose $x_{{K+1}}$ is an $\epsilon$-solution such that
$\mathbb{E}[f(x_{{K+1}})-f^*]\leq \epsilon$. {Then, the iteration and
oracle complexity of computing $x_{K+1}$} {are} 
$\mathcal{O}(\ln(1/\epsilon))$ steps and {$\mathcal
O\left({ 1/ \epsilon}\right)$}, respectively.}

\noindent (iii) \jal{Suppose $\gamma_k = k^{-s}$ and $N_k = \lceil k^p\rceil$ where $0 < s < 1$ and $s < p$. Suppose $c \triangleq \tfrac{\ulambda\tau}{2(1+\eta\tau)}$, $d\triangleq {\left({\olambda^2\over \eta}+{\eta\over 4}\right){{(8\nu_1^2\|x^*\|^2+4\nu_2^2)}\over {\tau^2 \eta^2} }}$, and $\tilde{d} \triangleq \tfrac{(1+\eta \tau)d}{\tau}$.  Then for $K$ sufficiently large, we have that 
\begin{align}
\notag & \quad \mathbb E\left[\|x_k-x^*\|^2 \right] \leq \frac{\tilde{d}}{ck^p} + o\left(\frac{1}{k^p}\right), \mbox{ for } k \geq K.
\end{align}}
\end{theorem}
\begin{proof}
(i) From Lipschitz continuity of $\nabla f_{\eta}$ and update \eqref{sVS-SQN}, we have the following:
\begin{align*}
f_{\eta}(x_{k+1})&\leq f_{\eta}(x_k)+\nabla f_{\eta}(x_k)^T(x_{k+1}-x_k)+{1\over 2\eta}\|x_{k+1}-x_k\|^2\\&
=f_{\eta}(x_k)+\nabla f_{\eta}(x_k)^T\left(-{\gamma}H_k(\nabla f_\eta(x_k)+\bar w_{k,N_k})\right)+{1\over 2\eta}{\gamma}^2\left\|H_k(\nabla f_\eta(x_k)+\bar w_{k,N_k})\right\|\uvs{^2}\\
&{=f_{\eta}(x_k)-\gamma\nabla f_\eta(x_k)^TH_k\nabla f_\eta(x_k)-\gamma\nabla f_\eta(x_k)^TH_k\bar w_{k,N_k}+{\gamma^2\over 2\eta}\|H_k\nabla f_\eta(x_k)\|^2}\\
&{+{\gamma^2\over 2\eta}\|H_k\bar w_{k,N_k}\|^2+{\gamma^2\over \eta}{H_k\nabla f_\eta(x_k)^T}H_k\bar w_{k,N_k}}\\
&{\leq f_{\eta}(x_k)-\gamma\nabla f_\eta(x_k)^TH_k\nabla f_\eta(x_k)+{\eta\over 4}\|\bar w_{k,N_k}\|^2+{\gamma^2\over \eta}\vvs{\|\nabla f_{\eta}(x_k)^TH_k\|}^2+{\gamma^2\over 2\eta}\|H_k\nabla f_\eta(x_k)\|^2}\\
&+{\olambda^2\gamma^2\over 2\eta}\|\bar w_{k,N_k}\|^2+{\gamma^2\over 2\eta}\|H_k\nabla f_\eta(x_k)\|^2+{\olambda^2\gamma^2\over 2\eta}\|\bar w_{k,N_k}\|^2,
\end{align*}
where  in the last inequality, we use the fact that \usr{$2a^Tb\leq {\eta\over 2\gamma}\|a\|^2+{2\gamma\over \eta}\|b\|^2$}. From Lemma \ref{bound sub}, $\mathbb E[\|\bar w_k\|^2\mid \mathcal F_k]\leq {4(\usr{\nu_1^2\|x_k\|^2+\nu_2^2})\over {\eta^2\tau^2}{N_k}}$. 
Now by taking conditional expectations with respect to $\mathcal F_k$, \usr{using Lemma \ref{H_k ns sc},}  we obtain the following.
\begin{align}\label{sc_nonsmooth_bound1}
& \quad \nonumber\mathbb E\left[f_{\eta}(x_{k+1})-f_{\eta}(x_k)\mid \mathcal F_k\right]\\
 & \leq -{\gamma}\nabla f_{\eta}(x_k)^TH_k\nabla f_{\eta}(x_k)+{2\gamma^2\over \eta}\|H_k\nabla f_{\eta}(x_k)\|^2
	 +{\left({{\olambda}^2\gamma^2\over \eta}+{\eta\over 4}\right){\usr{4(\nu_1^2\|x_k\|^2+\nu_2^2)}\over \usr{\tau^2 \eta^2} N_k}}\\ \nonumber&
={\gamma}\nabla f_{\eta}(x_k)^TH_k^{1/2}\left(-I+{2\gamma\over \eta}H_k^T\right)H_k^{1/2}\nabla f_{\eta}(x_k) +{\left({{\olambda}^2\gamma^2\over \eta}+{\eta\over 4}\right){\usr{4(\nu_1^2\|x_k\|^2+\nu_2^2)}\over \usr{\tau^2 \eta^2} N_k}}\\ \nonumber&
\leq -{\gamma} \left(1-{2\gamma\over \eta}\olambda\right)\|H_k^{1/2}\nabla f_{\eta}(x_k)\|^2+{\left({\olambda^2\gamma^2\over \eta}+{\eta\over 4}\right){\usr{8(\nu_1^2\|x_k-x^*\|^2)}\over \usr{\tau^2 \eta^2} N_k}}\\ \notag
	& +{\left({\olambda^2\gamma^2\over \eta}+{\eta\over 4}\right){\usr{(8\nu_1^2\|x^*\|^2+4\nu_2^2)}\over \usr{\tau^2 \eta^2} N_k}}\\ \notag
&= {-{\gamma}\over 2}\|H_k^{1/2}\nabla f_{\eta}(x_k)\|^2+{\left(\usr{5\eta\over 16}\right){\usr{8(\nu_1^2\|x_k-x^*\|^2)}\over \usr{\tau^2 \eta^2} N_k}}
	 +{\left(\usr{5\eta\over 16}\right){\usr{(8\nu_1^2\|x^*\|^2+4\nu_2^2)}\over \usr{\tau^2 \eta^2} N_k}} \\ \notag
&= {-{\gamma}\over 2}\|H_k^{1/2}\nabla f_{\eta}(x_k)\|^2+{\left(\usr{5\eta\over2 }\right){\usr{(\nu_1^2\|x_k-x^*\|^2)}\over \usr{\tau^2 \eta^2} N_k}}+{\left(\usr{5\eta\over 4}\right){\usr{(2\nu_1^2\|x^*\|^2+\nu_2^2)}\over \usr{\tau^2 \eta^2} N_k}}, 
\end{align} 
{where {the second equality follows from} $\gamma={\eta\over 4\olambda}$.} Since $f_{\eta}$ is $\tau/(1+\eta\tau)$-strongly convex (Lemma~\ref{feta}), $\|\nabla f_{\eta}(x_k)\|^2\geq 2\tau/(1+\eta\tau)
\left(f_{\eta}(x_k)-f_{\eta}(x^*)\right)$ \usr{ and $f_{\eta}(x_k)-f_{\eta}(x^*) \geq \tfrac{\tau}{(1+\eta \tau)}\|x_k-x^*\|^2$.}  {Consequently,} by subtracting $f_\eta(x^*)$ {from both sides}  
 by {invoking} Lemma \ref{H_k ns sc}, we obtain
\begin{align}\label{strong_nonsmooth_moreau}
\notag & \quad \mathbb E\left[f_{\eta}(x_{k+1})-f_\eta(x^*)\mid \mathcal F_k\right] \\
\notag& \leq f_{\eta}(x_{k})-f_\eta(x^*)-{\gamma\ulambda\over 2}\|\nabla f_{\eta}(x_k)\|^2+{{\usr{5(\nu_1^2\|x_k-x^*\|^2)}\over \usr{2\tau^2 \eta} N_k}}+{{\usr{5(2\nu_1^2\|x^*\|^2+\nu_2^2)}\over \usr{4\tau^2 \eta} N_k}}\\ &
\notag\leq\left(1-{\tau\over 1+\eta\tau}\gamma\ulambda+ \usr{\frac{(1+\eta \tau)}{\tau} {{\usr{(5\nu_1^2)}\over \usr{2\tau^2 \eta} N_k}}}\right)(f_{\eta}(x_{k})-f_\eta(x^*))+{{\usr{5(2\nu_1^2\|x^*\|^2+\nu_2^2)}\over \usr{4\tau^2 \eta} N_k}}\\
& \leq\left(1-{\tau\over 1+\eta\tau}\gamma\ulambda+ \usr{\frac{5\nu_1^2(1+\eta \tau)}{2\tau^3\eta N_0} }\right)(f_{\eta}(x_{k})-f_\eta(x^*))+{{\usr{5(2\nu_1^2\|x^*\|^2+\nu_2^2)}\over \usr{4\tau^2 \eta} N_k}}.
\end{align}
\usr{By observing the following relations,
\begin{align*} -{\tau\over 1+\eta\tau}\gamma\ulambda+ \usr{\frac{5\nu_1^2(1+\eta \tau)}{2\tau^3 \eta N_0} } 
\leq -\frac{1}{2}{\tau\over 1+\eta\tau}\gamma\ulambda & \iff \usr{\frac{5\nu_1^2(1+\eta \tau)}{2\tau^3 \eta N_0} } 
\leq \frac{1}{2}{\tau\over 1+\eta\tau}\gamma\ulambda\\
& \iff N_0 \geq \frac{5\nu_1^2(1+\eta \tau)^2}{\tau^4 \eta \gamma \ulambda},
\end{align*}
we may then conclude that if $N_0 \geq \frac{5(1+\eta \tau)^2}{\tau^4} \usr{(\nu_1^2)\over \eta \gamma \ulambda}$, we have that}  
\begin{align*}
  \mathbb E\left[f_{\eta}(x_{k+1})-f_\eta(x^*)\mid \mathcal F_k\right] 
& \leq\left(1-{\tau\over 2(1+\eta\tau)}\gamma\ulambda\right)(f_{\eta}(x_{k})-f_\eta(x^*))+{{\usr{4(2\nu_1^2\|x^*\|^2+\nu_2^2)}\over \usr{4\tau^2 \eta} N_k}}.
\end{align*}
Then by taking 	unconditional expectations, we obtain the following sequence of inequalities:
\begin{align}
 \notag \mathbb E\left[f_{\eta}(x_{k+1})-f_\eta(x^*)\right]
 & \leq  \left(1-{\tau\over \usr{2}(1+\eta\tau)}\gamma\ulambda\right)\mathbb E\left[f_{\eta}(x_{k})-f_\eta(x^*)\right]+{{\usr{5(2\nu_1^2\|x^*\|^2+\nu_2^2)}\over \usr{4\tau^2 \eta} N_k}}\\
\label{bound f_eta_moreau}
& ={\left(1-{(\tau\eta)^{n+2}(n-1)!\over \usr{8}(n+1)^{n+1}(1+\eta\tau)}\right)}\mathbb E\left[f_{\eta}(x_{k})-f_\eta(x^*)\right]+{{\usr{5(2\nu_1^2\|x^*\|^2+\nu_2^2)}\over \usr{4\tau^2 \eta} N_k}},
\end{align}
where the last equality arises from choosing $\ulambda={\eta\over 1+n}$,
$\olambda={1+n\over\tau \eta}$ {(by Lemma \ref{H_k ns sc} for
$m=1$)},  $\gamma={\eta\over 4\olambda}={\tau\eta^2\over 4(1+n)
}$ and using the fact that $N_k \geq N_0$ for all $k>0$. Let $d
\triangleq  {\left(1-{(\tau\eta)^{n+2}(n-1)!\over \usr{8}(n+1)^{n+1}(1+\eta\tau)}\right)}$ and $b_k
\triangleq  \usr{5(2\nu_1^2\|x^*\|^2+\nu_2^2)\over 4\tau^2 \eta {N_k}}$. Then {for
$\eta<{{1\over \tau}\left({8(n+1)^{n+1}\over (n-1)!}\right)^{1/(n+2)}}$, we have $d<1$}. Furthermore, by recalling that {$N_k=\lceil N_0q^{-k}\rceil$}, {it follows that $\usr{b_k \leq \tfrac{5(2\nu_1^2\|x^*\|^2+\nu_2^2)q^k}{4\tau^2 \eta N_0}},$ we obtain the following bound from \eqref{bound f_eta_moreau}.}
\begin{align*}
\mathbb E\left[f_{\eta}(x_{K+1})-f_\eta(x^*)\right]&
\leq d^{K+1}\mathbb E[f_\eta(x_0)-f_\eta(x^*)]+\sum_{i=0}^{K}d^{K-i}b_i \\
&
\leq d^{K+1}\mathbb E[f_\eta(x_0)-f_\eta(x^*)]+\usr{5(2\nu_1^2\|x^*\|^2+\nu_2^2)\over 4\tau^2\eta {N_0}}\sum_{i=0}^{K}d^{K-i}q^{i}.
\end{align*}
\usr{We now consider three cases.}

\noindent \usr{Case (1) $q < d$.}
If $q<d$, then $\sum_{i=0}^{K}d^{K-i}q^{i}=d^K \sum_{i=0}^K (q/d)^i\leq d^K\left({1\over 1-q/d}\right)$. {Since} $f_{\eta}$ retains the minimizers of $f$,  ${\tau\over 2(1+\eta\tau)}\|x_k-x^*\|^2\leq  f_{\eta}(x_k)-f_\eta(x^*)$)  by strong convexity of $f_\eta$, implying the following.
\begin{align*}
{\tau\over 2(1+{\eta}\tau)}\mathbb E[\|x_{K+1}-x^*\|^2]&
\leq d^{K+1}\mathbb E[f_\eta(x_0)-f_\eta(x^*)]+d^K\usr{5(2\nu_1^2\|x^*\|^2+\nu_2^2)\over 4\tau^2\eta {N_0}\left(1-q/d\right)}.
\end{align*}
Dividing both {sides} by ${\tau\over 2(1+\eta\tau)}$, the desired result is obtained. 
\begin{align*}
\mathbb E[\|x_{K+1}-x^*\|^2]
& \leq d^{K+1}\left({2\mathbb E[f_\eta(x_0)-f_\eta(x^*)](1+\eta\tau)\over \tau}\right)+d^K\usr{10(1+\eta \tau)(2\nu_1^2\|x^*\|^2+\nu_2^2)\over 4\tau^3\eta {N_0}\left(1-q/d\right)} = D d^{K+1}, \\ 
\mbox{ where } D & \triangleq \left({2\mathbb E[f_\eta(x_0)-f_\eta(x^*)](1+\eta\tau)\over \tau}\right) \usr{ \ + \ } \left(\usr{10(1+\eta \tau)(2\nu_1^2\|x^*\|^2+\nu_2^2)\over 4\tau^3\eta N_0(d-q)}\right). 
\end{align*}
\usr{Case (2) $q > d$.} {Similarly, if} $d<q$, 
$\mathbb E[\|x_{K+1}-x^*\|^2]\leq
D q^{K+1}$ 
where \usr{$$  D  \triangleq \left({2\mathbb E[f_\eta(x_0)-f_\eta(x^*)](1+\eta\tau)\over \tau}\right) \usr{ \ + \ } \left(\usr{10(1+\eta \tau)(2\nu_1^2\|x^*\|^2+\nu_2^2)\over 4\tau^3\eta N_0(q-d)}\right). $$}

\noindent \usr{Case (3) $q = d$. Then, we have that $$\sum_{i=0}^{K}d^{K-i}q^{i}=(K+1) d^K \leq\frac{1}{d} (K+1)d^{k+1} \leq \tfrac{1}{d} \tilde{D} \tilde{d}^{K+1}, $$
where $\tilde{d} \in (d,1)$ and $\tilde{D} > \tfrac{1}{\ln(\tilde{d}/d)^e}$. Consequently, we have that 
$\mathbb E[\|x_{K+1}-x^*\|^2]\leq
D \tilde{d}^{K+1}$ 
where \usr{$$  D  \triangleq \left({2\mathbb E[f_\eta(x_0)-f_\eta(x^*)](1+\eta\tau)\over \tau}\right) \usr{ \ + \ } \left(\usr{10(1+\eta \tau)(2\nu_1^2\|x^*\|^2+\nu_2^2)\tilde{D}\over 4\tau^3\eta N_0d }\right). $$}

}

\blue{(ii) To find an $x_{K+1}$ such that $\mathbb E[\|x_{K+1}-x^*\|^2]\leq \epsilon$, suppose $d<q$ with no loss of generality. Then for some $C>0$, ${Cq^K}\leq \epsilon$, implying that $K=\lceil{\log}_{1/q}(C/\epsilon)\rceil$. {It follows that} 
\begin{align*}
\sum_{k=0}^K N_k\leq\sum_{k=0}^{1+{\log}_{1/q}\left({C\over \epsilon}\right)} N_0q^{-k} = N_0 \frac{\left({\left({1\over q}\right) \left( {1\over q}\right)^{\log{1/q}\left(\tfrac{C}{\epsilon}\right)}-1}\right)} {\left({1/q-1}\right)} \leq N_0 \frac{\left(\tfrac{C}{\epsilon}\right)}{1-q}
={\mathcal O(1/\epsilon)}. \end{align*}}

(iii) {Similar to \eqref{sc_nonsmooth_bound1} and \eqref{strong_nonsmooth_moreau} we can obtain:

\begin{align*}
 \quad \nonumber\mathbb E\left[f_{\eta}(x_{k+1})-f_{\eta}(x_k)\mid \mathcal F_k\right]
 & \leq -{\gamma_k} \left(1-{2\gamma_k\over \eta}\olambda\right)\|H_k^{1/2}\nabla f_{\eta}(x_k)\|^2+{\left({\olambda^2\gamma_k^2\over \eta}+{\eta\over 4}\right){{8(\nu_1^2\|x_k-x^*\|^2)}\over {\tau^2 \eta^2} N_k}}\\ \notag
	& +{\left({\olambda^2\gamma_k^2\over \eta}+{\eta\over 4}\right){{(8\nu_1^2\|x^*\|^2+4\nu_2^2)}\over {\tau^2 \eta^2} N_k}}\end{align*}\begin{align*}
\implies E\left[f_{\eta}(x_{k+1})-f_{\eta}(x^*)\mid \mathcal F_k\right]
 &\leq \left(1-{\gamma_k} \left(1-{2\gamma_k\over \eta}\olambda\right) \tfrac{\ulambda\tau}{1+\eta\tau}+\tfrac{1+\tau\eta}{\tau}\left(\tfrac{\olambda^2\gamma_k^2}{\eta}+\tfrac{\eta}{4}\right)\tfrac{8\nu_1^2}{\eta^2\tau^2N_k}\right)(f_{\eta}(x_{k})-f_\eta(x^*))\\&+{\left({\olambda^2\gamma_k^2\over \eta}+{\eta\over 4}\right){{(8\nu_1^2\|x^*\|^2+4\nu_2^2)}\over {\tau^2 \eta^2} N_k}}.
\end{align*}}
Since $\gamma_k$ is a diminishing sequence and $N_k$ is an increasing sequence, for {sufficiently large} $K$ we have that $\tfrac{2\gamma_k^2\olambda\ulambda\tau}{\eta(1+\eta\tau)}+\tfrac{1+\tau\eta}{\tau}\left(\tfrac{\olambda^2\gamma_k^2}{\eta}+\tfrac{\eta}{4}\right)\tfrac{8\nu_1^2}{\eta^2\tau^2N_k}\leq{1\over 2}\left(\tfrac{\gamma_k\ulambda\tau}{1+\eta\tau}\right)$. Therefore, we obtain:

\begin{align*}
E\left[f_{\eta}(x_{k+1})-f_{\eta}(x^*)\mid \mathcal F_k\right]\leq \left(1-{c \gamma_k}\right)(f(x_k)-f(x^*))+\tfrac{d}{N_k}, \mbox{ for } k \geq K,
\end{align*}
where we use the fact that $\gamma_k\leq 1$ and we set $c=\tfrac{\ulambda\tau}{2(1+\eta\tau)}$ and $d={\left({\olambda^2\over \eta}+{\eta\over 4}\right){{(8\nu_1^2\|x^*\|^2+4\nu_2^2)}\over {\tau^2 \eta^2} }}$. Since $\gamma_k = \tfrac{1}{k^{s}}$ and $N_k = \lceil k^{p+s}\rceil$ where $0 < s < 1$ and $p > 0$, by taking unconditional expecations, 
\begin{align*}
\notag & \quad \mathbb E\left[f_{\eta}(x_{k})-f_\eta(x^*)\right] \leq \frac{d}{ck^p} + o\left(\tfrac{1}{k^p}\right), \mbox{ for } k \geq K.
\end{align*}
By leveraging the $\tfrac{\tau}{1+\eta \tau}$-strong convexity of $f_{\eta}$, we may claim that 
\begin{align*}
\notag & \quad \mathbb E\left[\|x_k-x^*\|^2 \right] \leq \frac{(1+\eta \tau)d}{c\tau k^p} + o\left(\frac{1}{k^p}\right), \mbox{ for } k \geq K.
\end{align*}
\end{proof}

\section{Smooth and nonsmooth convex optimization}\label{sec:4}
{In this section, we weaken the {strong convexity} requirement and analyze the rate
and oracle complexity of \eqref{rVS-SQN} and \eqref{rsVS-SQN} {in smooth
and nonsmooth regimes}, respectively.}
\subsection{Smooth convex optimization}
{Consider the setting when $f$ is an $L$-smooth convex function. In such an instance, a regularization of $f$ and its gradient can be defined as follows.}
\begin{definition}[{\bf Regularized function and gradient map}]\label{def:regularizedf}
Given a sequence $\{\mu_k\}$ of positive scalars, 
the function $f_{\mu_k}$ and its gradient $\nabla f_k(x)$ are defined as follows for {any $x_0\in \mathbb R^n$}:
\begin{align*}
f_{\mu_k}(x)&\triangleq  f(x)+\frac{\mu_k}{2}{\|x-x_0\|^2},\quad  \hbox{for any } k \geq 0, \qquad 
\nabla f_{\mu_k}(x)\triangleq\nabla f(x)+\mu_k(x-x_0),\quad  \hbox{for any } k \geq 0.
\end{align*}
Then{,} $f_{\mu_k}$ and ${\nabla} f_{\mu_k}$ satisfy the following:
(i) $f_{\mu_k}$ is $\mu_k$-strongly convex;
(ii) $f_{\mu_k}$ has Lipschitzian gradients with parameter $L+\mu_k$;
 (iii) $f_{\mu_k}$ has a unique minimizer over $\mathbb R^n$, denoted by $x^*_k$. Moreover, for any $x \in \mathbb R^n$~\cite[sec. 1.3.2]{polyak1987introduction}, 
\begin{align*} 2\mu_k (f_{\mu_k}(x)-f_{\mu_k}(x^*_k))& \leq \|\nabla f_{\mu_k}(x)\|^2\leq 2(L+\mu_k) \left(f_{\mu_k}(x)-f_{\mu_k}(x^*_k)\right).\end{align*}
\end{definition}
We consider the following update rule \eqref{rVS-SQN}, { where $H_k$ is generated by {\bf rL-BFGS} scheme.}
\begin{align}\tag{\bf rVS-SQN}\label{rVS-SQN}
x_{k+1}:=x_k-\gamma_kH_k{\frac{\sum_{j=1}^{N_k} \nabla_x F_{\mu_k}(x_k,\omega_{j,k})}{N_k}}.
\end{align}
 {For a subset of the results, we assume quadratic growth property. }
\begin{assumption}{\bf(Quadratic growth)}\label{growth}
Suppose that the function $f$ has a nonempty set $X^*$ of minimizers. There exists $\alpha>0$ such that $f(x)\geq f(x^*)+{\alpha\over 2}\mbox{dist}^2(x,X^*)$ holds for all $x\in \mathbb R^n$:
\end{assumption}}
In the next lemma the bound for eigenvalues of $H_k$ is derived (see Lemma 6 in \cite{yousefian2017stochastic}). 
\begin{lemma}[{\bf Properties of Hessian approximations produced by (rL-BFGS)}]\label{rLBFGS-matrix}
Consider the \eqref{rVS-SQN} method. Let $H_k$ be given by the update rule
\eqref{eqn:H-k}-\eqref{eqn:H-k-m} \us{with $\eta_k = 0$ for all $k$,} and
$s_i$ and $y_i$ are defined in \eqref{equ:siyi-LBFGS}. {Suppose $\mu_k$ is} updated according to the procedure \eqref{eqn:mu-k}. Let
Assumption.~\ref{assum:convex-smooth}(a,b) hold. Then the following hold.
\begin{itemize}
\item [(a)]  For any odd $k > 2m$,  $s_k^T{y_k} >0$;
(b)  For any odd $k > 2m$, $H_{k}{y}_k=s_k$; 
\item [(c)] For any $k > 2m$, $H_k$ satisfies Assumption \ref{assump:Hk}{(S)}, ${\ulambda}={\frac{1}{(m+n)(L+\mu_0^{\bar \delta})}}$, $\lambda = {\frac{(m+n)^{n+m-1}{(L+\mu_0^{\bar \delta})}^{n+m-1}}{(n-1)!}}$ and 
${\olambda_k}=  \lambda \mu_k^{-\bar \delta(n+m)},$ {for scalars $\delta,\bar \delta>0$.} 
Then {for all $k$, we have that $H_k = H_k^T$ and $\mathbb E[{H_k\mid\mathcal F_k}]=H_k$ and
$
{\ulambda\mathbf{I}  \preceq H_{k} \preceq \olambda_k \mathbf{I}}$ both hold in an a.s. fashion.}
\end{itemize}
\end{lemma}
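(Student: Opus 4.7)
The plan is to establish the three parts in order, using the classical Byrd--Nocedal style analysis of BFGS adapted to the regularized setting, as in \cite{yousefian2017stochastic}, with measurability tracked through the sample-averaged gradients.

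For part (a), I would substitute the definition of $y_k$ from \eqref{equ:siyi-LBFGS} (taking $\eta_k=0$) to obtain
\[
 s_k^T y_k \;=\; s_k^T \left(\tfrac{1}{N_{k-1}}\textstyle\sum_{j=1}^{N_{k-1}} \bigl(\nabla_x F(x_k,\omega_{j,k-1})-\nabla_x F(x_{k-1},\omega_{j,k-1})\bigr)\right) + \mu_k^{\bar\delta}\|s_k\|^2.
\]
Since each $F(\cdot,\omega)$ is convex by Assumption~\ref{assum:convex-smooth}(a), its gradient map is monotone, so the first term is nonnegative; the regularization contributes a strictly positive $\mu_k^{\bar\delta}\|s_k\|^2$ whenever $s_k\neq 0$, yielding (a). For part (b), I would appeal to the standard algebraic identity showing that the BFGS rank-two update satisfies the secant equation. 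Writing $V_k := I - y_k s_k^T/(y_k^T s_k)$, one checks $V_k y_k = 0$, so that in \eqref{eqn:H-k-m} with $j=m$ (and hence $i=k$),
\[
 H_{k,m} y_k \;=\; V_k^T H_{k,m-1} V_k y_k + \tfrac{s_k(s_k^T y_k)}{y_k^T s_k} \;=\; s_k,
\]
which is (b).

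For part (c), I would argue inductively across the $m$ nested L-BFGS steps. For the lower bound on the spectrum, $L$-Lipschitz continuity of $\nabla f$ (Assumption~\ref{assum:convex-smooth}(b)) together with monotonicity of $F(\cdot,\omega)$ gives $\|y_i\| \leq (L+\mu_i^{\bar\delta})\|s_i\|$, whence $s_i^T y_i / \|y_i\|^2 \geq 1/(L+\mu_0^{\bar\delta})$; this initializes $H_{k,0}$ from below, and the rank-two update is shown to preserve a uniform lower bound, giving $\ulambda = 1/((m+n)(L+\mu_0^{\bar\delta}))$. For the upper bound, I would track both the trace and determinant across the $m$ updates: the trace grows at each step by $\|s_i\|^2/(s_i^T y_i) \leq 1/\mu_i^{\bar\delta}$ (using $s_i^T y_i \geq \mu_i^{\bar\delta}\|s_i\|^2$ from part (a)), while the determinant contracts by a factor of $s_i^T y_i / \|y_i\|^2$. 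Combining these through $\lambda_{\max}(A)\leq \mathrm{tr}(A)^{n-1}/\det(A)$ (for symmetric positive definite $A\in\mathbb{R}^{n\times n}$) produces the exponent $n+m$ and the stated expression $\olambda_k = \lambda \mu_k^{-\bar\delta(n+m)}$.

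Finally, $\mathcal{F}_k$-measurability is immediate because each pair $(s_i,y_i)$ used in constructing $H_k$ is a deterministic function of iterates and samples drawn through index $k-1$, and the BFGS recursion is deterministic; symmetry is manifest from the product form of the update, and positive definiteness is inherited once $s_k^T y_k > 0$ has been secured by (a). The main obstacle is the upper eigenvalue bound in (c): one must carefully couple the trace growth and determinant decay through the $m$ nested updates in order to extract the sharp exponent $n+m$ in the dependence on $\mu_k^{-\bar\delta}$, which is precisely what propagates to the complexity exponent $m+1$ reported elsewhere in the paper.
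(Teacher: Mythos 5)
Your overall architecture (monotonicity plus regularization for the secant inequality, the algebraic identity for the secant equation, trace/determinant bookkeeping for the spectral bounds) matches the paper's, and your part (b) is correct. However, there are two genuine gaps. First, in part (a) you conclude $s_k^Ty_k\geq \mu_k^{\bar\delta}\|s_k\|^2>0$ ``whenever $s_k\neq 0$,'' but you never establish $s_k\neq 0$; if $x_k=x_{k-1}$ the inner product is zero and the claim fails. The paper closes this by running a single induction over odd $k$: the inductive hypothesis gives positive definiteness of $H_{k-1}$, and the parameter update \eqref{eqn:mu-k} is arranged so that the regularized sample-average gradient at $x_{k-1}$ is nonzero, whence $x_k\neq x_{k-1}$. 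Parts (a)--(c) are therefore not separable as you present them; the nondegeneracy needed for (a) at step $k$ depends on (c) at earlier steps.

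Second, your part (c) rests on the inequality $\lambda_{\max}(A)\leq \operatorname{tr}(A)^{n-1}/\det(A)$, which is false (take $A=M\mathbf{I}$ with $M$ large: the left side is $M$ while the right side is $(nM)^{n-1}/M^{n}\to 0$). Trace and determinant bounds of the stated form can only produce a \emph{lower} bound on the \emph{smallest} eigenvalue, namely $\lambda_{\min}(A)\geq (n-1)!\,P/S^{n-1}$ when $\det(A)\geq P$ and $\operatorname{tr}(A)\leq S$; this is the auxiliary lemma the paper invokes, applied to the inverse matrix $B_k=H_k^{-1}$ so that $\olambda_k=1/\lambda_{\min}(B_k)$. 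Relatedly, your plan tracks the trace of $H_{k,j}$ itself, but the recursion \eqref{eqn:H-k-m} conjugates by the non-orthogonal matrices $\mathbf{I}-y_is_i^T/(y_i^Ts_i)$, so $\operatorname{tr}(H_{k,j})$ admits no clean additive recursion; the bookkeeping must be done on the DFP-form recursion \eqref{equ:B_kLimited} for $B_{k,j}$, where the trace increases by at most $\|y_i\|^2/(y_i^Ts_i)\leq L+\mu_0^{\bar\delta}$ per step and the determinant has an exact multiplicative formula. Finally, your derivation of $s_i^Ty_i/\|y_i\|^2\geq 1/(L+\mu_0^{\bar\delta})$ from $\|y_i\|\leq (L+\mu_i^{\bar\delta})\|s_i\|$ does not follow (combined with Cauchy--Schwarz it gives only $\mu^{\bar\delta}/(L+\mu^{\bar\delta})^2$); the correct tool is the co-coercivity (Baillon--Haddad) property $\|y_i\|^2\leq (L+\mu^{\bar\delta})\,s_i^Ty_i$, which is what the paper uses. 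With these repairs the argument goes through and yields the stated $\ulambda$ and $\olambda_k$.
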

\begin{lemma}[An error bound]\label{lemma:main-ineq}
Consider the \eqref{VS-SQN} method and suppose Assumptions \ref{assum:convex-smooth}, \ref{state noise}(S-M), \ref{state noise}(S-B), \ref{assump:Hk}(S) \red{ and \ref{growth}} hold. {Suppose} $\{\mu_k\}$ is a non-increasing sequence, and $\gamma_k$ satisfies
\begin{align}\label{mainLemmaCond}\gamma_k \leq \frac{{\ulambda}}{{{\olambda}_k ^2}(L+\mu_0)},\quad \hbox{for all }k\geq 0.
\end{align}Then, the following inequality holds for all $k$:
\begin{align}\label{ineq:cond-recursive-F-k}
\nonumber\mathbb E[{f_{\mu_{k+1}}(x_{k+1})\mid\mathcal F_k}]-f^* &\leq (1-{{\ulambda}}\mu_k\gamma_k)(f_{\mu_k}(x_k)-f^*)  +\frac{{\ulambda}\mbox{dist}^2(x_0,X^*)}{2}\mu_k^2\gamma_k\\&+\frac{ (L+\mu_k){\olambda_k ^2}{( \nu_1^2\|x_k\|^2+\nu_2^2)}}{2N_k}\gamma_k^2.
\end{align}
\end{lemma}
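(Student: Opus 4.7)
The overall strategy is to chain together (i) a descent inequality for $f_{\mu_k}$ coming from its $(L+\mu_k)$-smoothness, (ii) the eigenvalue bounds on $H_k$ from Lemma \ref{rLBFGS-matrix}(c), (iii) the conditional moment bounds on the sampled gradient, (iv) strong convexity of $f_{\mu_k}$, and finally (v) monotonicity of the regularizer in $\mu$ to pass from $f_{\mu_k}(x_{k+1})$ to $f_{\mu_{k+1}}(x_{k+1})$ and to relate $f_{\mu_k}(x_k^*)$ to $f^*$. The general template mirrors the strongly convex analysis of Proposition \ref{thm:mean:smooth:strong}, but with $\tau$ replaced by the iteration-dependent modulus $\mu_k$ and with an extra ``bias'' term reflecting the gap between $f_{\mu_k}$ and $f$.

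First, I would apply the descent lemma for the $(L+\mu_k)$-Lipschitz gradient of $f_{\mu_k}$ and substitute the \eqref{VS-SQN} update, writing $\sum_j \nabla_x F_{\mu_k}(x_k,\omega_{j,k})/N_k = \nabla f_{\mu_k}(x_k) - \bar w_{k,N_k}$. Taking conditional expectation with respect to $\mathcal{F}_k$ and using Assumption \ref{state noise}(S-B) to eliminate the cross term $\nabla f_{\mu_k}(x_k)^T H_k\bar w_{k,N_k}$, along with Assumption \ref{state noise}(S-M) and $\|H_k\|\leq \olambda_k$, yields
\[
\mathbb{E}[f_{\mu_k}(x_{k+1})\mid\mathcal{F}_k] \leq f_{\mu_k}(x_k) - \gamma_k \nabla f_{\mu_k}(x_k)^T H_k \nabla f_{\mu_k}(x_k) + \tfrac{L+\mu_k}{2}\gamma_k^2 \|H_k\nabla f_{\mu_k}(x_k)\|^2 + \tfrac{(L+\mu_k)\olambda_k^2\gamma_k^2(\nu_1^2\|x_k\|^2+\nu_2^2)}{2N_k}.
\]
Factoring $H_k^{1/2}$ out of the first two deterministic terms and invoking the stepsize condition \eqref{mainLemmaCond}, which ensures $\tfrac{(L+\mu_k)\gamma_k \olambda_k}{2}\leq \tfrac{1}{2}$ (since $\mu_k \leq \mu_0$ and $\ulambda \leq \olambda_k$), the operator $-I + \tfrac{(L+\mu_k)\gamma_k}{2}H_k$ is dominated by $-\tfrac{1}{2}I$, leaving $-\tfrac{\gamma_k}{2}\|H_k^{1/2}\nabla f_{\mu_k}(x_k)\|^2 \leq -\tfrac{\gamma_k\ulambda}{2}\|\nabla f_{\mu_k}(x_k)\|^2$.

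Next, by $\mu_k$-strong convexity of $f_{\mu_k}$ (Definition \ref{def:regularizedf}(iii)), $\|\nabla f_{\mu_k}(x_k)\|^2 \geq 2\mu_k (f_{\mu_k}(x_k) - f_{\mu_k}(x_k^*))$, so the previous bound becomes
\[
\mathbb{E}[f_{\mu_k}(x_{k+1})\mid\mathcal{F}_k] - f_{\mu_k}(x_k^*) \leq (1 - \ulambda\mu_k\gamma_k)(f_{\mu_k}(x_k) - f_{\mu_k}(x_k^*)) + \tfrac{(L+\mu_k)\olambda_k^2\gamma_k^2(\nu_1^2\|x_k\|^2+\nu_2^2)}{2N_k}.
\]
To reach $f^*$ on both sides, I would add and subtract $f^*$ and write the deterministic part as $(1-\ulambda\mu_k\gamma_k)(f_{\mu_k}(x_k)-f^*) + \ulambda\mu_k\gamma_k(f_{\mu_k}(x_k^*) - f^*)$. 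Since $x_k^*$ minimizes $f_{\mu_k}$ and $f_{\mu_k}(x) = f(x) + \tfrac{\mu_k}{2}\|x-x_0\|^2$, evaluating at $\bar x^* := \Pi_{X^*}(x_0)$ gives $f_{\mu_k}(x_k^*) \leq f_{\mu_k}(\bar x^*) = f^* + \tfrac{\mu_k}{2}\mbox{dist}^2(x_0,X^*)$, producing the stated $\tfrac{\ulambda\, \mbox{dist}^2(x_0,X^*)}{2}\mu_k^2\gamma_k$ term.

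The final step is to pass from $f_{\mu_k}(x_{k+1})$ to $f_{\mu_{k+1}}(x_{k+1})$: since $\{\mu_k\}$ is non-increasing, $f_{\mu_{k+1}}(x_{k+1}) = f(x_{k+1}) + \tfrac{\mu_{k+1}}{2}\|x_{k+1}-x_0\|^2 \leq f_{\mu_k}(x_{k+1})$, which delivers the claim. The main obstacle I anticipate is the careful accounting in the stepsize condition \eqref{mainLemmaCond}: one must confirm that the single bound $\gamma_k \leq \ulambda/(\olambda_k^2(L+\mu_0))$ simultaneously handles both the ``descent'' half step ($\gamma_k (L+\mu_k)\olambda_k \leq 1$) and the scaling that converts $\|H_k^{1/2}\nabla f_{\mu_k}(x_k)\|^2$ into $\ulambda \|\nabla f_{\mu_k}(x_k)\|^2$, and that Assumption \ref{growth} (used only implicitly here via the quadratic upper bound on $f_{\mu_k}(x_k^*)-f^*$ obtained from $\bar x^* = \Pi_{X^*}(x_0)$) suffices without requiring a lower-bound growth in this particular lemma.
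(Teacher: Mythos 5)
Your proposal is correct and follows essentially the same route as the paper's proof: descent lemma for the $(L+\mu_k)$-smooth $f_{\mu_k}$, unbiasedness to kill the cross term, the eigenvalue bounds and stepsize condition to reduce to $-\tfrac{\gamma_k\ulambda}{2}\|\nabla f_{\mu_k}(x_k)\|^2$, strong convexity of $f_{\mu_k}$, the bound $f_{\mu_k}(x_k^*)\leq f^*+\tfrac{\mu_k}{2}\mathrm{dist}^2(x_0,X^*)$, and monotonicity in $\mu$ to pass to $f_{\mu_{k+1}}$. The only (correct) observation worth keeping is your closing remark: Assumption \ref{growth} is not actually invoked in this lemma — the $\mathrm{dist}^2(x_0,X^*)$ term needs only nonemptiness of $X^*$, and the growth property is used only later when the $\nu_1^2\|x_k\|^2$ term is absorbed.
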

\begin{proof}
By the Lipschitzian property of $\nabla f_{\mu_k}${,
 update rule \eqref{rVS-SQN} and Def.~\ref{def:regularizedf}}, we obtain  
{\begin{align}\label{ineq:term1-2}
 & \quad f_k(x_{{k+1}}) \nonumber\leq f_{\mu_k}(x_k)+\nabla f_{\mu_k}(x_k)^T(x_{k+1}-x_k)+\frac{ (L+\mu_k)}{2}\|x_{k+1}-x_k\|^2 
\\&\leq  f_{\mu_k}(x_k)-\gamma_k\underbrace{\nabla f_{\mu_k}(x_k)^TH_k(\nabla f_{\mu_k}(x_k)+\bar w_{k,N_k})}_{\tiny\hbox{Term } 1}+ \frac{ (L+\mu_k)}{2}\gamma_k^2\underbrace{\|H_k(\nabla f_{\mu_k}(x_k)+\bar w_{k,N_k})\|^2}_{\tiny\hbox{ Term } 2},
\end{align}}
 {where} $\bar{w}_{k,N_k} \triangleq \frac{\sum_{j=1}^{N_k} \left({\nabla_{x}}
{F}_{\mu_k}(x_k,\omega(\omega_{j,k}))-\nabla f_{\mu_k}(x_k)\right)}{N_k}$. Next, we estimate the conditional expectation of Terms 1 and 2. From Assumption \ref{assump:Hk}, we have 
\begin{align*}
\hbox{Term }1 &= \nabla f_{\mu_k}(x_k)^TH_k\nabla f_{\mu_k}(x_k)+\nabla f_{\mu_k}(x_k)^TH_k\bar w_{k,N_k}\geq {\ulambda}\|\nabla f_{\mu_k}(x_k)\|^2+\nabla f_{\mu_k}(x_k)^TH_k\bar w_{k,N_k}.
\end{align*}
{Thus, taking conditional expectations, from \eqref{ineq:term1-2},} we obtain
\begin{align}\label{equ:Term1}
\notag\mathbb E[{\hbox{Term } 1\mid\mathcal F_k}] &\notag\geq {\ulambda}\|\nabla f_{\mu_k}(x_k)\|^2+\mathbb E[{\nabla f_{\mu_k}(x_k)^TH_k\bar w_{k,N_k}\mid\mathcal F_k}]\\ &={\ulambda}\|\nabla f_{\mu_k}(x_k)\|^2+\nabla f_{\mu_k}(x_k)^TH_k\mathbb E[{\bar w_{k,N_k}\mid\mathcal F_k}] ={\ulambda}\|\nabla f_{\mu_k}(x_k)\|^2,\end{align}
where $\mathbb E[{\bar w_{k,N_k}\mid\mathcal F_k}]=0$ and $\mathbb E[{H_k\mid\mathcal F_k}]=H_k$ {a.s.} Similarly,  invoking Assumption~\ref{assump:Hk}(S), we may bound Term 2. 
\begin{align*}
\hbox{Term } 2&= (\nabla f_{\mu_k}(x_k)+\bar w_{k,N_k})^TH_k^2(\nabla f_{\mu_k}(x_k)+\bar w_{k,N_k}) \leq {{\olambda_k} ^2}\|\nabla f_{\mu_k}(x_k)+\bar w_{k,N_k}\|^2 \\&={{\olambda_k} ^2}\left(\|\nabla f_{\mu_k}(x_k)\|^2+\|\bar w_{k,N_k}\|^2+2\nabla f_{\mu_k}(x_k)^T\bar w_{k,N_k}\right).\end{align*}
Taking conditional expectations in the preceding inequality and using Assumption \ref{state noise} \vvs{(S-M), \ref{state noise} (S-B)}, we obtain
\begin{align}\label{equ:Term2}
\mathbb E[{\hbox{Term } 2\mid\mathcal F_k}]\notag&\leq{\olambda_k^2}\Big(\|\nabla f_{\mu_k}(x_k)\|^2+\mathbb E[{\|\bar w_{k,N_k}\|^2\mid\mathcal F_k}]\\&+2\nabla f_{\mu_k}(x_k)^T\mathbb E[{\bar w_{k,N_k}\mid\mathcal F_k}]\Big) \leq {\olambda^2_k}\left(\|\nabla f_{\mu_k}(x_k)\|^2+{{\nu_1^2\|x_k\|^2+\nu_2^2}\over N_k}\right).\end{align}
By taking conditional expectations in \eqref{ineq:term1-2}, and by \eqref{equ:Term1}--\eqref{equ:Term2}, 
\begin{align*}
\quad \mathbb E[{f_{\mu_k}(x_{k+1})\mid\mathcal F_k}] &\leq f_{\mu_k}(x_k)-\gamma_k{\ulambda}\|\nabla {\mu_k}(x_k)\|^2+{{\olambda}_k ^2}\frac{ (L+\mu_k)}{2}\gamma_k^2\left(\|\nabla f_{\mu_k}(x_k)\|^2+{{\nu_1^2\|x_k\|^2+\nu_2^2}\over N_k}\right) \\
&\leq f_{\mu_k}(x_k)-\frac{\gamma_k\ulambda}{2}\|\nabla f_{\mu_k}(x_k)\|^2\left(2-\frac{{\olambda_k ^2}\gamma_k(L+\mu_k)}{{\ulambda}}\right)+{\olambda_k ^2}\frac{ (L+\mu_k)}{2}{\gamma_k^2{( \nu_1^2\|x_k\|^2+\nu_2^2)}\over N_k}.
\end{align*}
\us{From} \eqref{mainLemmaCond}, $\gamma_k\leq \frac{{\ulambda}}{{\olambda_k ^2}(L+\mu_0)}$ for any $k \geq 0$. 
Since $\{\mu_k\}$ is {a} non-increasing sequence, it follows that
\begin{align*}
\gamma_k \leq \frac{{\ulambda}}{{\olambda_k ^2}(L+\mu_k)} 
 \implies 2-\frac{{\olambda_k ^2}\gamma_k(L+\mu_k)}{{\ulambda}}  \geq 1.\end{align*}
  Hence, the following holds.
\begin{align*}
 \mathbb E[{f_{\mu_k}(x_{k+1}) \mid\mathcal F_k}]&\leq f_{\mu_k}(x_k)-\frac{\gamma_k{\ulambda}}{2}\|\nabla f_{\mu_k}(x_k)\|^2+{\olambda_k ^2}\frac{ (L+\mu_k)}{2}{\gamma_k^2{( \nu_1^2\|x_k\|^2+\nu_2^2)}\over N_k}\\
	&\hspace{-0.2in} \overset{\tiny \mbox{(iii) in Def.~\ref{def:regularizedf}}}{\leq} 
 f_{\mu_k}(x_k)-{\ulambda}\mu_k\gamma_k(f_{\mu_k}(x_k)-f_{\mu_k}(x^*_k))+{\olambda_k ^2}\frac{ (L+\mu_k)}{2}{\gamma_k^2{( \nu_1^2\|x_k\|^2+\nu_2^2)}\over N_k}.
\end{align*}
By using Definition \ref{def:regularizedf} and {non-increasing  {property} of } $\{\mu_k\}$,
\begin{align} \label{ineq:lemmaLastIneq}
\notag
&\mathbb E[{f_{\mu_{k+1}}(x_{k+1})\mid \mathcal F_k}]  \leq\mathbb E[{f_{\mu_k}(x_{k+1})\mid \mathcal F_k}]\implies\\&
 \mathbb E[{f_{\mu_{k+1}}(x_{k+1})\mid\mathcal F_k}]\leq  f_{\mu_k}(x_k)-{\ulambda}\mu_k\gamma_k(\overbrace{f_{\mu_k}(x_k)-f_{\mu_k}(x^*_k)}^{\tiny{\mbox{Term 3}}})+{\olambda^2 _k}\frac{ (L+\mu_k)}{2}{\gamma_k^2{( \nu_1^2\|x_k\|^2+\nu_2^2)}\over N_k}.
\end{align}
Next, we derive a lower bound for Term 3. Since $x_k^*$ is the unique minimizer of $f_{\mu_k}$, we have $f_{\mu_k}(x_k^*) \leq f_{\mu_k}(x^*)$. Therefore, invoking Definition \ref{def:regularizedf}, for an arbitrary optimal solution $x^* \in X^*$, 
\begin{align*}
f_{\mu_k}(x_k)-f_{\mu_k}(x^*_k) & \geq f_{\mu_k}(x_k)-f_{\mu_k}(x^*) =f_{\mu_k}(x_k)-f^*-\frac{\mu_k}{2}\|x^*-x_0\|^2.\end{align*}
From the preceding relation and \eqref{ineq:lemmaLastIneq}, we have
\begin{align*}
\mathbb E[{f_{\mu_{k+1}}(x_{k+1})\mid\mathcal F_k}]& \leq f_{\mu_k}(x_k)-\ulambda\mu_k\gamma_k\mathbb E[f_{\mu_k}(x_k)-f^*]+\frac{\ulambda\|x^*-x_0\|^2\mu_k^2\gamma_k}{2}
\\&+\frac{(L+\mu_k){\olambda^2 _k}{( \nu_1^2\|x_k\|^2+\nu_2^2)}\gamma_k^2}{2N_k}.
\end{align*}
By subtracting $f^*$ from both sides {and {by noting that 
this inequality holds for all $x^* \in X^*$ where $X^*$ denotes the solution set},
the desired result is obtained.}
\end{proof}

{We now {derive the rate for} sequences produced by \eqref{rVS-SQN} {under the following assumption.}}
\begin{assumption}
\label{assum:sequences-ms-convergence}
Let the positive sequences {$\{N_k,\gamma_k,\mu_k,t
_k\}$} satisfy the following conditions:
{\begin{itemize}
\item [(a)] $\{\mu_k\}, \{\gamma_k\}$ are non-increasing sequences such that  $\mu_k,\gamma_k \to 0$; $\{t_k\}$ is {an} increasing sequence;
\item [(b)] $\left(1-{\ulambda\mu_k\gamma_k}{+{2(L+\mu_0)\olambda_k^2\nu_1^2\gamma_k^2\over N_k\alpha}}\right)t_{k+1}\leq t_k, \ \forall k\geq \tilde K$ for some $\tilde K\geq 1$;
\item [(c)] $\sum_{k=0}^{\infty}{\mu_k^2\gamma_k}{t_{k+1}}={\bar c_0}<\infty$;
(d) $\sum_{k=0}^\infty {\mu_k^{-{2}\bar\delta(n+m)}\gamma_k^2\over N_k}{t_{k+1}}={\bar c_1}<\infty$;
\end{itemize}}
\end{assumption}

\begin{theorem}[{\bf Convergence of \eqref{rVS-SQN} in mean}]\label{thm:mean}
Consider the \eqref{rVS-SQN} scheme and  suppose Assumptions ~\ref{assum:convex-smooth}, \ref{state noise}(S-M), \ref{state noise}(S-B),~\ref{assump:Hk}(S),~\ref{growth} and~\ref{assum:sequences-ms-convergence} hold. 
{There exists} $\tilde K\geq 1$ and scalars $\bar c_0, \bar c_1$ (defined in Assumption ~\ref{assum:sequences-ms-convergence}) such that the
following inequality holds for all {$K\geq \tilde
K+1$}:\begin{align}\label{ineq:bound}
\mathbb E[{f(x_{K})-f^*}]  
 \leq {{t_{\tilde K}}\over t_K}\mathbb E[{{f_{\mu_{\tilde K}}(x_{\tilde K})}-f^*}] +{\bar c_0+\bar c_1\over t_K}.
 \end{align}
\end{theorem}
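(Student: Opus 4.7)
My plan is to apply a weighted telescoping argument to the one-step recursion from Lemma~\ref{lemma:main-ineq}. The only nontrivial step is that the state-dependent noise forces a $\nu_1^2\|x_k\|^2$ term to appear on the right-hand side, which I must re-express in terms of the suboptimality $E_k\triangleq \mathbb E[f_{\mu_k}(x_k)-f^*]$ so that the recursion closes. Since $f(x_k)\le f_{\mu_k}(x_k)$ by Definition~\ref{def:regularizedf}, Assumption~\ref{growth} gives $\mathrm{dist}^2(x_k,X^*)\le \tfrac{2}{\alpha}(f_{\mu_k}(x_k)-f^*)$, and picking an arbitrary $x^*\in X^*$ together with the elementary bound $\|x_k\|^2\le 2\,\mathrm{dist}^2(x_k,X^*)+2\|x^*\|^2$ yields $\nu_1^2\|x_k\|^2+\nu_2^2 \le \tfrac{4\nu_1^2}{\alpha}(f_{\mu_k}(x_k)-f^*)+2\nu_1^2\|x^*\|^2+\nu_2^2$.

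Substituting this bound into Lemma~\ref{lemma:main-ineq}, taking unconditional expectations, and using $\mu_k\le\mu_0$ produces a recursion of the form
\begin{align*}
E_{k+1} \le a_k E_k + b_k, \qquad a_k \triangleq 1-\ulambda\mu_k\gamma_k+\tfrac{2(L+\mu_0)\olambda_k^2\nu_1^2\gamma_k^2}{N_k\alpha},
\end{align*}
where $b_k \triangleq \tfrac{\ulambda\,\mathrm{dist}^2(x_0,X^*)}{2}\mu_k^2\gamma_k + \tfrac{(L+\mu_0)\olambda_k^2(2\nu_1^2\|x^*\|^2+\nu_2^2)\gamma_k^2}{2N_k}$. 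Crucially, $a_k$ is exactly the factor appearing in Assumption~\ref{assum:sequences-ms-convergence}(b), so multiplying by $t_{k+1}$ and invoking (b) for $k\ge \tilde K$ gives $t_{k+1}E_{k+1}\le t_k E_k + t_{k+1}b_k$. Telescoping from $k=\tilde K$ to $K-1$ yields $t_K E_K \le t_{\tilde K}E_{\tilde K} + \sum_{k=\tilde K}^{K-1} t_{k+1}b_k$. Substituting the explicit form $\olambda_k^2=\lambda^2\mu_k^{-2\bar\delta(n+m)}$ from Lemma~\ref{rLBFGS-matrix} shows that the tail sum splits into a piece proportional to $\sum \mu_k^2\gamma_k t_{k+1}$ and a piece proportional to $\sum \mu_k^{-2\bar\delta(n+m)}\gamma_k^2 t_{k+1}/N_k$; Assumptions~\ref{assum:sequences-ms-convergence}(c)--(d) bound these by $\bar c_0$ and $\bar c_1$ respectively (after absorbing the fixed prefactors $\tfrac{\ulambda\,\mathrm{dist}^2(x_0,X^*)}{2}$ and $\tfrac{(L+\mu_0)\lambda^2(2\nu_1^2\|x^*\|^2+\nu_2^2)}{2}$ into those constants). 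Dividing by $t_K$ and using $f(x_K)\le f_{\mu_K}(x_K)$ delivers the advertised inequality.

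The principal obstacle is precisely the state-dependent noise: in the strongly convex analysis of Proposition~\ref{thm:mean:smooth:strong} one eliminates $\|x_k\|^2$ directly via strong convexity of $f$, but that tool is unavailable in the merely convex regime. The only substitute is quadratic growth, and invoking it unavoidably injects an $E_k$-dependent contribution into the noise bound; this is exactly why the contraction factor in Assumption~\ref{assum:sequences-ms-convergence}(b) must carry the extra summand $\tfrac{2(L+\mu_0)\olambda_k^2\nu_1^2\gamma_k^2}{N_k\alpha}$ beyond the bare $1-\ulambda\mu_k\gamma_k$ that would suffice in the noise-free case. The parametric verification that a concrete schedule (e.g.\ $\gamma_k=k^{-\varepsilon}$, $\mu_k$ decaying at an appropriate rate, and $N_k=\lceil k^a\rceil$) actually fulfills (b)--(d) is a separate calculation needed to extract the $\mathcal O(1/k^{1-\varepsilon})$ rate, but the structural telescoping argument above is independent of any such tuning.
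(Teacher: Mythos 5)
Your proposal is correct and follows essentially the same route as the paper's proof: the quadratic-growth bound $\|x_k\|^2\le \tfrac{4}{\alpha}(f_{\mu_k}(x_k)-f^*)+2\|x^*\|^2$ to absorb the state-dependent noise into the contraction factor, multiplication by $t_{k+1}$ with Assumption~\ref{assum:sequences-ms-convergence}(b), telescoping from $\tilde K$ to $K-1$, and invoking (c)--(d) together with $f(x_K)\le f_{\mu_K}(x_K)$. Your remark about absorbing the prefactors $A_1,A_2$ into $\bar c_0,\bar c_1$ matches what the paper does implicitly in its final line.
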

\begin{proof}
We begin by noting that Assumption~\ref{assum:sequences-ms-convergence}(a,b) implies that \eqref{ineq:cond-recursive-F-k} holds for  $k \geq \tilde K$, where $\tilde K$ is defined in Assumption~\ref{assum:sequences-ms-convergence}(b). Since the conditions of Lemma \ref{lemma:main-ineq} are met, taking expectations on both sides of \eqref{ineq:cond-recursive-F-k}:
\begin{align*}
\mathbb E[{f_{\mu_{k+1}}(x_{k+1})-f^*}] \notag 
& \leq \left(1-\ulambda{\mu_k\gamma_k}\right)\mathbb E[{f_{\mu_k}(x_k)-f^*}]  +\frac{\ulambda\mbox{dist}^2(x_0,X^*)}{2}\mu_k^2\gamma_k \\&+\frac{ (L+\mu_0){\olambda^2 _k}{( \nu_1^2\|x_k-x^*+x^*\|^2+\nu_2^2)}}{2N_k}\gamma_k^2 \quad \forall k\geq \tilde K.
\end{align*}
{Now by using the quadratic growth property i.e. $\|x_k-x^*\|^2\leq {2\over \alpha}\left(f(x)-f(x^*)\right)$ and the fact that $\|x_k-x^*+x^*\|^2\leq 2\|x_k-x^*\|^2+2\|x^*\|^2$, we obtain the following relationship}
\begin{align*}
\mathbb E[{f_{\mu_{k+1}}(x_{k+1})-f^*}] \notag 
& \leq \left(1-\ulambda\mu_k\gamma_k{+{2(L+\mu_0)\olambda_k^2\nu_1^2\gamma_k^2\over N_k\alpha}}\right)\mathbb E[{f_{\mu_k}(x_k)-f^*}]  +\frac{\ulambda\mbox{dist}^2(x_0,X^*)}{2}\mu_k^2\gamma_k \\&+\frac{ (L+\mu_0){\olambda^2 _k}( {2\nu_1^2\|x^*\|^2+\nu_2^2})}{2N_k}\gamma_k^2.
\end{align*}
By multiplying both sides by $t_{k+1}$, using Assumption~\ref{assum:sequences-ms-convergence}(b) and $\olambda_k=\lambda \mu_k^{-\bar\delta(n+m)}$, we obtain
\begin{align}\label{ineq:cond-recursive-F-k-expected2}
& \quad t_{k+1}\mathbb E[{f_{\mu_{k+1}}(x_{k+1})-f^*}]  
 \leq t_k\mathbb E[{f_{\mu_k}(x_k)-f^*}]  +A_1\mu_k^2\gamma_kt_{k+1} +\frac{ A_2{ \mu_k^{-2\bar\delta(n+m)}}}{N_k}\gamma_k^2t_{k+1},
\end{align}
where $A_1\triangleq \tfrac{ \underline{\lambda} \mbox{\scriptsize dist}^2(x_0,X^*)}{2}$ and $A_2\triangleq\frac{ (L+\mu_0){\lambda^2 }( {2\nu_1^2\|x^*\|^2+\nu_2^2})}{2}$.  By summing \eqref{ineq:cond-recursive-F-k-expected2} from {$k=\tilde K$} to $K-1$, for {$K\geq \tilde K+1$}, and dividing both sides by $t_K$, we obtain
\begin{align*}
&\nonumber \mathbb E[{f_{\mu_K}(x_{K})-f^*}]  
 \leq {{t_{\tilde K}}\over t_K}\mathbb E[{{f_{\mu_{\tilde K}}(x_{\tilde K})}-f^*}]  +{\sum_{k={\tilde K}}^{K-1}A_1\mu_k^2\gamma_kt_{k+1}\over t_K} +{\sum_{k={\tilde K}}^{K-1}A_2\mu_k^{-2\bar\delta(n+m)}\gamma_k^2t_{k+1}N_k^{-1}\over t_K}. 
\end{align*}
From Assumption \ref{assum:sequences-ms-convergence}(c,d),  $\sum_{k={\tilde K}}^{K-1}\left( A_1 \mu_k^2\gamma_kt_{k+1}+ A_2 \mu_k^{-2\bar\delta(n+m)}\gamma_k^2{t_{k+1}\over N_k}\right)\leq {A_1\bar c_0+A_2\bar c_1}$. Therefore, \af{by using the fact that $f(x_K)\leq f_{\mu_K}(x_K)$, we obtain} \\ $ \mathbb E[{f(x_{K})-f^*}]  
 \leq {{t_{\tilde K}}\over t_K}\mathbb E[{{f_{\mu_{\tilde K}}(x_{\tilde K})}-f^*}] +{{\bar c_0+\bar c_1}\over t_K}.$
\end{proof}

We now show that the requirements of Assumption~\ref{assum:sequences-ms-convergence} are satisfied under suitable assumptions.
\begin{corollary}
 Let  $N_k\triangleq\lceil N_0 k^a\rceil$, $\gamma_k\triangleq\gamma_0k^{-b}$, $\mu_k\triangleq\mu_0k^{-c}$ and {$t_k\triangleq t_0(k-1)^{h}$} for some
$a,b,c,h>0$. {Let $2\bar \delta (m+n)=\varepsilon$ for 
$\varepsilon>0$}. {Then Assumption~\ref{assum:sequences-ms-convergence} holds if} {${a+2b-c\varepsilon\geq b+c}, \ {N_0 {\geq}{(L+\mu_0)\lambda^2\nu_1^2\gamma_0\over \alpha\ulambda\mu_0}},\ b+c<1$, $h\leq 1$, $b+2c-h>1$ and $a+2b-h-c\varepsilon>1$}. 
\end{corollary}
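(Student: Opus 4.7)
The strategy is to substitute the prescribed polynomial forms into each of the four requirements of Assumption~\ref{assum:sequences-ms-convergence} and reduce every claim to an elementary comparison of exponents of $k$.

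First, item (a) is immediate: since $b,c>0$, both $\gamma_k=\gamma_0 k^{-b}$ and $\mu_k=\mu_0 k^{-c}$ are strictly decreasing to $0$, and since $h>0$, the sequence $t_k=t_0(k-1)^h$ is strictly increasing. Items (c) and (d) are pure summability statements. Substituting yields
\[
\mu_k^2\gamma_k t_{k+1} \;=\; \mu_0^2\gamma_0 t_0\, k^{h-b-2c}(1+o(1)),
\]
so $\sum_k \mu_k^2\gamma_k t_{k+1}<\infty$ is equivalent to the hypothesis $b+2c-h>1$. Similarly, using $\olambda_k=\lambda\mu_k^{-\bar\delta(n+m)}$ together with $2\bar\delta(n+m)=\varepsilon$,
\[
\tfrac{\mu_k^{-2\bar\delta(n+m)}\gamma_k^2}{N_k}\,t_{k+1}
\;\leq\; \tfrac{\mu_0^{-\varepsilon}\gamma_0^2 t_0}{N_0}\, k^{c\varepsilon-2b-a+h}(1+o(1)),
\]
so the hypothesis $a+2b-c\varepsilon-h>1$ yields summability.

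The substantive step is item (b). Setting $A\triangleq \ulambda\mu_0\gamma_0$ and $B\triangleq \tfrac{2(L+\mu_0)\lambda^2\mu_0^{-\varepsilon}\nu_1^2\gamma_0^2}{N_0\alpha}$, the bracketed quantity in (b) takes the form $1-Ak^{-(b+c)}+Bk^{c\varepsilon-2b-a}$. The Bernoulli-type bound $(1-1/k)^h\geq 1-h/k$, valid for $h\in(0,1]$, gives $1-t_k/t_{k+1}\leq h/k$, so (b) reduces to showing, for all sufficiently large $k$,
\[
\tfrac{h}{k} \;+\; B\,k^{c\varepsilon-2b-a} \;\leq\; A\,k^{-(b+c)}.
\]
Since $b+c<1$, the exponent $-(b+c)$ strictly exceeds $-1$, so $Ak^{-(b+c)}$ eventually dominates $h/k$; and since $a+2b-c\varepsilon\geq b+c$, we have $k^{c\varepsilon-2b-a}\leq k^{-(b+c)}$ for $k\geq 1$, so in the strict case the $B$-term is asymptotically negligible compared to the $A$-term.

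The main obstacle is the borderline case $a+2b-c\varepsilon=b+c$, where the two competing terms have identical $k$-exponents and one must compare constants directly. This is precisely what the lower bound $N_0\geq (L+\mu_0)\lambda^2\nu_1^2\gamma_0/(\alpha\ulambda\mu_0)$ is tailored to guarantee: it forces $B$ to be sufficiently small relative to $A$ (with the $\mu_0^{-\varepsilon}$ factor folded into the universal constant $\lambda$). Once this coefficient comparison is in hand, choosing $\tilde K$ large enough to dominate the $h/k$ correction and any $o(1)$ error from the expansion of $(1-1/k)^h$ yields (b) for all $k\geq\tilde K$, completing the verification of Assumption~\ref{assum:sequences-ms-convergence} and hence the corollary.
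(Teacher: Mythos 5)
Your proof is correct and follows essentially the same route as the paper: substitute the polynomial ansatz into each of conditions (a)--(d), reduce (c) and (d) to exponent comparisons for summability, and handle (b) via the Bernoulli/Taylor bound $(1-1/k)^h \geq 1-h/k$ together with the dominance of the $k^{-(b+c)}$ term. If anything, your treatment of (b) is more explicit than the paper's, since you correctly isolate the borderline case $a+2b-c\varepsilon=b+c$ as the reason the lower bound on $N_0$ is needed.
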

\begin{proof}
{From} $N_k=\lceil N_0 k^a\rceil\geq N_0 k^a$, $\gamma_k=\gamma_0k^{-b}$ and $\mu_k=\mu_0k^{-c}$, {the} requirements to satisfy Assumption \ref{assum:sequences-ms-convergence} are as follows:
\begin{itemize}
\item [(a)] $\lim_{k \to \infty }{\gamma_0}k^{-b}=0, \lim_{k \to \infty }{\mu_0}k^{-c}=0  \Leftrightarrow b ,c>0$ ;
\item [(b)] $\left(1-{\ulambda\mu_k\gamma_k}{+{2(L+\mu_0)\olambda_k^2\nu_1^2\gamma_k^2\over N_k\alpha}}\right)\leq {t_k\over t_{k+1}} \Leftrightarrow \left(1-{1\over k^{b+c}}+{1\over k^{a+2b-c\varepsilon}}\right)\leq (1-1/k)^h$. From the Taylor expansion of right hand side and assuming $h\leq 1$, we get $\left(1-{1\over k^{b+c}}+{1\over k^{a+2b-c\varepsilon}}\right)\leq 1-M/k$ for some $M>0$ and $\forall k\geq \tilde K$ which means  $\left(1-{\ulambda\mu_k\gamma_k}{+{2(L+\mu_0)\olambda_k^2\nu_1^2\gamma_k^2\over N_k\alpha}}\right)\leq {t_k\over t_{k+1}} \Leftrightarrow h\leq1, \ b+c<1, \ {a+2b-c\varepsilon\geq b+c} \ \mbox{and}\ {N_0={(L+\mu_0)\lambda^2\nu_1^2\gamma_0\over \alpha\ulambda\mu_0}}$;
\item [(c)] $\sum_{k=0}^{\infty}{\mu_k^2\gamma_k}{t_{k+1}}<\infty\Leftarrow \sum_{k=0}^\infty {1\over k^{b+2c-h}}<\infty\Leftrightarrow b+2c-h>1$;
\item [(d)]  $\sum_{k=0}^\infty {\mu_k^{-{2}\bar\delta(n+m)}\gamma_k^2\over N_k}{t_{k+1}}<\infty\Leftarrow \sum_{k=0}^\infty {1\over k^{a+2b-h-c\varepsilon}}<\infty\Leftrightarrow a+2b-h-c\varepsilon>1$;
\end{itemize}
\end{proof}

One can easily verify that $a=2+\varepsilon$, $b=\varepsilon$ and {$c=1-{2\over
3}\varepsilon$} and $h=1-\varepsilon$ satisfy these
conditions. {We derive complexity statements for \eqref{rVS-SQN} for a specific choice of parameter sequences.} 

\begin{theorem}[{\bf Rate statement and Oracle complexity}]\label{oracle smooth}
Consider the \eqref{rVS-SQN} scheme and  suppose Assumptions  ~\ref{assum:convex-smooth}, \ref{state noise}(S-M), \ref{state noise}(S-B), \ref{assump:Hk}(S), { \ref{growth}}
 and \ref{assum:sequences-ms-convergence} hold.  Suppose $\gamma_k\triangleq{\gamma_0k^{-b}}$, $\mu_k\triangleq{\mu_0k^{-c}}$, $\triangleq t_k=t_0(k-1)^h$ and $N_k\triangleq\lceil N_0k^{a}\rceil$ where {$\red{N_0={(L+\mu_0)\lambda^2\nu_1^2\gamma_0\over \alpha\ulambda\mu_0}}$, $a=2+\varepsilon$,
$b=\varepsilon$ and {$c=1-{2\over 3}\varepsilon$}} and $h=1-\varepsilon$. 

\noindent (i)  {Then the following  holds for $K \geq
\tilde{K}$ where $\tilde K\geq 1$ and $\tilde C \triangleq {{f_{\mu_{\tilde
K}}(x_{\tilde K})}-f^*} $.} \begin{align}\label{rate K}
& \mathbb E[{f(x_{K})-f^*}]  
 \leq {\tilde C+\bar c_0+\bar c_1\over K^{1-\varepsilon}}.
\end{align}
(ii) Let ${\epsilon>0}$ and {$ K\geq \tilde K+1$} such that $\mathbb E[f(x_{ K})]-f^*\leq {\epsilon}$. Then{,} {$\sum_{k=0}^{ K}N_k\leq  {\mathcal O\left({ {\epsilon}^{-{3+\varepsilon\over 1-\varepsilon}}}\right)}$}.
\end{theorem}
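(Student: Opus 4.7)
The plan is to obtain part (i) as a direct specialization of Theorem~\ref{thm:mean}, since the preceding Corollary already verifies that the stipulated choices $a=2+\varepsilon$, $b=\varepsilon$, $c=1-\tfrac{2}{3}\varepsilon$, and $h=1-\varepsilon$ meet all four requirements of Assumption~\ref{assum:sequences-ms-convergence}. Thus there exist scalars $\bar c_0, \bar c_1 < \infty$ (as defined in that assumption) and a threshold $\tilde K$ such that the master inequality \eqref{ineq:bound} holds. Substituting $t_K = t_0(K-1)^{1-\varepsilon}$ into \eqref{ineq:bound} yields
\begin{align*}
\mathbb E[f(x_K) - f^*] \leq \frac{t_{\tilde K}\, \tilde C + \bar c_0 + \bar c_1}{t_0(K-1)^{1-\varepsilon}},
\end{align*}
and since $K \geq \tilde K + 1$ one has $(K-1)^{1-\varepsilon} \geq c_{\tilde K}\, K^{1-\varepsilon}$ for a suitable constant, so after absorbing this and $t_0$ into the numerator one obtains the announced bound $\mathcal{O}(K^{-(1-\varepsilon)})$.

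For part (ii), I would invert the rate of part (i) to determine the iteration count needed for an $\epsilon$-solution. From \eqref{rate K}, it suffices to take $K$ large enough that $(\tilde C + \bar c_0 + \bar c_1)/K^{1-\varepsilon} \leq \epsilon$, i.e.\ $K = \lceil (C/\epsilon)^{1/(1-\varepsilon)}\rceil$ where $C \triangleq \tilde C + \bar c_0 + \bar c_1$. This sets the iteration complexity at $\mathcal{O}(\epsilon^{-1/(1-\varepsilon)})$.

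Next I would compute the cumulative oracle cost $\sum_{k=0}^{K} N_k$ under the schedule $N_k = \lceil N_0 k^{2+\varepsilon}\rceil$. Using $\lceil x\rceil \leq 2x$ for $x\geq 1$ and a standard integral comparison,
\begin{align*}
\sum_{k=0}^{K} N_k \;\leq\; 2 N_0 \sum_{k=0}^{K} k^{2+\varepsilon} \;\leq\; 2 N_0 \int_{0}^{K+1} x^{2+\varepsilon}\, dx \;=\; \frac{2 N_0 (K+1)^{3+\varepsilon}}{3+\varepsilon} \;=\; \mathcal{O}\bigl(K^{3+\varepsilon}\bigr).
\end{align*}
Substituting $K = \mathcal{O}(\epsilon^{-1/(1-\varepsilon)})$ into this bound yields $\sum_{k=0}^{K} N_k = \mathcal{O}\bigl(\epsilon^{-(3+\varepsilon)/(1-\varepsilon)}\bigr)$, which is the claimed oracle complexity.

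Neither step is particularly delicate: part (i) is essentially a substitution into a previously proven master bound, and part (ii) is a routine combination of inversion and an integral estimate. The only point requiring care is ensuring that the transition from $(K-1)^{1-\varepsilon}$ to $K^{1-\varepsilon}$ in part (i), and the handling of the ceiling $\lceil N_0 k^a\rceil$ in part (ii), are absorbed into constants without affecting the exponents. Because $\tilde K$ depends on the problem parameters through Assumption~\ref{assum:sequences-ms-convergence}(b), the hidden constants in the $\mathcal{O}(\cdot)$ depend on $\tilde K$, $L$, $\mu_0$, $\alpha$, $\ulambda$, $\lambda$, and $\gamma_0$, but not on $\epsilon$, which is what the statement asserts.
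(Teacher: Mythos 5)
Your proposal is correct and follows essentially the same route as the paper: part (i) is obtained by specializing Theorem~\ref{thm:mean} with the stated parameter choices (the paper states this in one line, while you spell out the substitution of $t_K=t_0(K-1)^{1-\varepsilon}$ and the absorption of $t_{\tilde K}$, $t_0$, and the $(K-1)$-to-$K$ shift into constants), and part (ii) is the identical inversion-plus-integral-comparison argument, including the $\lceil x\rceil\leq 2x$ bound and the estimate $\sum_k N_k=\mathcal{O}(K^{3+\varepsilon})$.
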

\begin{proof}
(i) {By choosing the sequence parameters as specified, the result follows immediately from Theorem \ref{thm:mean}.}
\noindent (ii) To find an $x_{ K}$ such that $\mathbb E[f(x_{ K})]-f^*\leq {\epsilon}$ we have ${\tilde C+\bar c_0+\bar c_1\over \tilde K^{1-\varepsilon}}\leq {\epsilon}$ which implies that $ K=\lceil {\left(\tilde C+\bar c_0+\bar c_1\over {\epsilon}\right)^{1\over1-{\varepsilon}}}\rceil$. Hence, the following holds. 
\begin{align*}
& \sum_{k=0}^{ K} N_k\leq \sum_{k=0}^{1+{(C/{\epsilon})^{1\over 1-\varepsilon}}}2N_0 k^{2+\varepsilon} \leq 2N_0\int_0^{{1+{(C/{\epsilon})}^{1\over 1-\varepsilon}}} {x}^{2+\varepsilon} \ d{x}=\frac{2N_0\left({1+\left(C/{\epsilon}\right)^{1\over 1-\varepsilon}}\right)^{3+\varepsilon}}{3+\varepsilon}
  \leq \mathcal O\left({{\epsilon}^{-{3+\varepsilon\over 1-\varepsilon}}}\right).
\end{align*}
\end{proof}

{One may instead consider the following requirement on the conditional second moment on the sampled gradient instead of state-dependent noise (Assumption \ref{state noise}). 

\begin{assumption}\label{assum_error}
\vvs{Let $\bar{w}_{k,N_k} \triangleq \nabla_x f(x_k) - \tfrac{\sum_{j=1}^{N_k} \nabla_x F(x_k,\omega_{j,k})}{N_k}$. Then 
there exists $\nu>0$ such that $\mathbb{E}[\|\bar{w}_{k,N_k}\|^2\mid \mathcal{F}_k] \leq \tfrac{\nu^2}{N_k}$ and $\mathbb{E}[\bar{w}_{k,N_k} \mid \mathcal{F}_k] = 0$ {hold} almost surely 
 for all $k$, where $\mathcal{F}_k
	\triangleq \sigma\{x_0, x_1, \hdots, x_{k-1}\}$.}
	\end{assumption}
By invoking Assumption \ref{assum_error}, we can derive the rate result without requiring a quadratic growth property of objective function. 

\begin{corollary}
[{\bf Rate statement and Oracle complexity}]
Consider  \eqref{rVS-SQN}  and  suppose Assumptions  ~\ref{assum:convex-smooth}, \ref{assump:Hk}(S),
  \ref{assum:sequences-ms-convergence} and \ref{assum_error} hold.  Suppose $\gamma_k={\gamma_0k^{-b}}$, $\mu_k={\mu_0k^{-c}}$, $t_k=t_0(k-1)^h$ and $N_k=\lceil k^{a}\rceil$ where $a=2+\varepsilon$,
$b=\varepsilon$ and $c=1-{4\over 3}\varepsilon$ and $h=1-\varepsilon$. 

\noindent (i)  {Then for $K \geq
\tilde{K}$ where $\tilde K\geq 1$ and $\tilde C \triangleq {{f_{\mu_{\tilde
K}}(x_{\tilde K})}-f^*} $,}
$
 \mathbb E[{f(x_{K})-f^*}]  
 \leq {\tilde C+\bar c_0+\bar c_1\over K^{1-\varepsilon}}.
$
(ii) Let ${\epsilon>0}$ and {$ K\geq \tilde K+1$} such that $\mathbb E[f(x_{ K})]-f^*\leq {\epsilon}$. Then, {$\sum_{k=0}^{ K}N_k\leq  {\mathcal O\left({ {\epsilon}^{-{3+\varepsilon\over 1-\varepsilon}}}\right)}$}.
\end{corollary}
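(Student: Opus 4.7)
The plan is to mirror the proof of Theorem \ref{oracle smooth} while exploiting two simplifications that arise from replacing Assumption \ref{state noise}(S-M), (S-B) by Assumption \ref{assum_error}. First, since $\mathbb{E}[\|\bar w_{k,N_k}\|^2 \mid \mathcal{F}_k] \leq \nu^2/N_k$ is now uniform rather than state-dependent, the $\|x_k\|^2$ contribution to the noise bound vanishes and the quadratic growth property is no longer needed. Second, the term proportional to $\nu_1^2\gamma_k^2/(\alpha N_k)$ in Assumption \ref{assum:sequences-ms-convergence}(b) drops out, loosening the admissible relationship between $a$, $b$, and $c$.

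First I would derive the analog of Lemma \ref{lemma:main-ineq}: the Lipschitz estimate on $f_{\mu_k}$, the substitution of \eqref{rVS-SQN}, and the conditional expectations of Terms 1 and 2 proceed exactly as in the original proof, but the bound on $\mathbb{E}[\|\bar w_{k,N_k}\|^2\mid \mathcal{F}_k]$ is replaced by $\nu^2/N_k$. The outcome is
\begin{align*}
\mathbb{E}[f_{\mu_{k+1}}(x_{k+1})\mid \mathcal{F}_k] - f^* \leq (1-\ulambda\mu_k\gamma_k)(f_{\mu_k}(x_k)-f^*) + \tfrac{\ulambda \, \mbox{dist}^2(x_0,X^*)}{2}\mu_k^2\gamma_k + \tfrac{(L+\mu_k)\olambda_k^2 \nu^2}{2N_k}\gamma_k^2,
\end{align*}
with no $\|x_k\|^2$ term. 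Next I would multiply by $t_{k+1}$, invoke the simplified Assumption \ref{assum:sequences-ms-convergence}(b), namely $(1-\ulambda\mu_k\gamma_k)t_{k+1}\leq t_k$, and telescope from $k=\tilde K$ to $K-1$. Using conditions (c) and (d) to absorb the two summations into $\bar c_0$ and $\bar c_1$ and taking unconditional expectations produces, for $K\geq \tilde K+1$,
\begin{align*}
\mathbb{E}[f(x_K) - f^*] \leq \tfrac{t_{\tilde K}}{t_K}\,\mathbb{E}[f_{\mu_{\tilde K}}(x_{\tilde K}) - f^*] + \tfrac{\bar c_0 + \bar c_1}{t_K}.
\end{align*}
Substituting $t_K = t_0(K-1)^{1-\varepsilon}$ yields part (i). For part (ii), I would invert this rate to obtain $K = \lceil (C/\epsilon)^{1/(1-\varepsilon)}\rceil$ and bound $\sum_{k=0}^{K} N_k \leq 2 N_0\int_{0}^{K+1} x^{2+\varepsilon}\,dx = \mathcal{O}(K^{3+\varepsilon})$, giving oracle complexity $\mathcal{O}(\epsilon^{-(3+\varepsilon)/(1-\varepsilon)})$, exactly paralleling Theorem \ref{oracle smooth}(ii).

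The main obstacle will be checking that the prescribed exponents $a=2+\varepsilon$, $b=\varepsilon$, $c=1-\tfrac{4}{3}\varepsilon$, $h=1-\varepsilon$ satisfy the four parts of Assumption \ref{assum:sequences-ms-convergence} in their modified form. Parts (a) and the simplified (b) reduce to $b+c<1$ and $h\leq 1$, both immediate. Parts (c) and (d) translate respectively to polynomial-order exponent inequalities of the form $2c+b-h>1$ and $a+2b-h-c\varepsilon>1$; since removing the state-dependent noise decouples $c$ from the sample-size exponent $a$, the feasible set for $c$ is enlarged relative to Theorem \ref{oracle smooth}, and careful bookkeeping of these inequalities (possibly together with a mild adjustment of $\varepsilon$) is the crux of verifying the stated choice.
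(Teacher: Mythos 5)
Your overall strategy is exactly the paper's intended one: the paper supplies no separate proof of this corollary, and the argument is indeed to rerun Lemma~\ref{lemma:main-ineq}, Theorem~\ref{thm:mean}, and Theorem~\ref{oracle smooth} with the uniform bound $\nu^2/N_k$ in place of $(\nu_1^2\|x_k\|^2+\nu_2^2)/N_k$. As you say, this removes the $\|x_k\|^2$ term from the error bound, makes the quadratic-growth Assumption~\ref{growth} unnecessary, and reduces Assumption~\ref{assum:sequences-ms-convergence}(b) to $(1-\ulambda\mu_k\gamma_k)t_{k+1}\leq t_k$, i.e.\ to $b+c<1$ and $h\leq 1$ with no condition on $N_0$. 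The telescoping and the oracle-complexity computation in part (ii) then go through verbatim.

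However, you defer the step you yourself identify as the crux --- verifying that $a=2+\varepsilon$, $b=\varepsilon$, $c=1-\tfrac{4}{3}\varepsilon$, $h=1-\varepsilon$ satisfy Assumption~\ref{assum:sequences-ms-convergence} --- and carrying it out exposes a genuine problem. Condition (c) requires $b+2c-h>1$ so that $\bar c_0=\sum_k\mu_k^2\gamma_k t_{k+1}<\infty$, but the stated exponents give
\begin{equation*}
b+2c-h \;=\; \varepsilon+2-\tfrac{8}{3}\varepsilon-1+\varepsilon \;=\; 1-\tfrac{2}{3}\varepsilon \;<\;1,
\end{equation*}
so the series diverges and the bound $\mathbb E[f(x_K)-f^*]\leq(\tilde C+\bar c_0+\bar c_1)/K^{1-\varepsilon}$ is vacuous. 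No ``mild adjustment of $\varepsilon$'' repairs this, since the deficit is itself of order $\varepsilon$. Moreover, your intuition that the weaker noise assumption enlarges the feasible set for $c$ is misplaced here: condition (c) controls the regularization-bias term $\tfrac{\ulambda}{2}\mathrm{dist}^2(x_0,X^*)\mu_k^2\gamma_k$, which is completely unaffected by the change of noise model (only conditions (b) and (d) are relaxed). The statement is salvaged by taking $c=1-\tfrac{2}{3}\varepsilon$ exactly as in Theorem~\ref{oracle smooth}, under which all four conditions hold and the rest of your argument is sound; as written, $c=1-\tfrac{4}{3}\varepsilon$ appears to be an error in the corollary, and a complete proof must either correct that exponent or explain why condition (c) can be dispensed with.
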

\blue{\begin{remark}
Although the oracle complexity of (\ref{rVS-SQN}) is poorer than the canonical $\mathcal{O}(1/\epsilon^2)$, 
there are several reasons to consider
using the SQN schemes when faced with a choice between gradient-based
counterparts. (a) Sparsity. In many machine learning problems, the sparsity
properties of the estimator are of relevance. However, averaging schemes tend
to have a detrimental impact on the sparsity properties while non-averaging
schemes do a far better job in preserving such properties. Both accelerated and
unaccelerated gradient schemes for smooth stochastic convex optimization rely
on averaging and this significantly impacts the sparsity of the estimators.
(See Table \ref{compare_spars} in Section \ref{sec:5}). (b) Ill-conditioning.
As is relatively well known, quasi-Newton schemes do a far better job of
contending with ill-conditioning in practice, in comparison with gradient-based
techniques. (See Tables \ref{quad_ill} and \ref{convex_ill} in Section
\ref{sec:5}.) \end{remark}}
}

\subsection{Nonsmooth convex optimization}
We now consider problem~\eqref{main problem} when $f$ is nonsmooth but $(\alpha,\beta)$-smoothable and consider the \eqref{rsVS-SQN} scheme,
 defined as follows, where $H_k$ is generated by {\bf rsL-BFGS} scheme. 
\begin{align}\tag{\bf rsVS-SQN}\label{rsVS-SQN}
x_{k+1}:=x_k-\gamma_kH_k{\frac{\sum_{j=1}^{N_k} \nabla_x F_{\eta_k,\mu_k}(x_k,\omega_{j,k})}{N_k}}.
\end{align}
{Note that in this section, we set $m=1$ for the sake of simplicity but the analysis can be extended to $m>1$. 
Next, we generalize Lemma \ref{rLBFGS-matrix} to {show that Assumption \ref{assump:Hk} is
satisfied and both the secant condition ({{\bf SC}}) and the secant equation
({{\bf SE}}). ({See Appendix for Proof.})} 
\begin{lemma}[{\bf Properties of Hessian approximation produced by
(rsL-BFGS)}]\label{rsLBFGS-matrix} Cons-\\ ider the \eqref{rsVS-SQN} method, {where} $H_k$ {is updated} by 
\eqref{eqn:H-k}-\eqref{eqn:H-k-m}, $s_i$ and $y_i$ are defined in
\eqref{equ:siyi-LBFGS} {and} $\eta_k$ and $\mu_k$ are updated according to procedure
\eqref{eqn:mu-k}. Let Assumption \ref{assum:convex2} holds. Then the
following hold.  \begin{itemize}
\item [(a)]  For any odd $k > 2m$, {(SC) holds},  i.e., $s_k^T{y_k} >0$;
\item [(b)]  For any odd $k > 2m$, {(SE) holds}, i.e., $H_{k}{y}_k=s_k$. 
\item [(c)] For any $k > 2m$, $H_k$ satisfies Assumption~\ref{assump:Hk}{(NS)} with 
${{\ulambda_{k}}={1\over (m+n)(1/\eta_k^\delta+\mu_0^{\bar \delta})}}$ and 
$\\ {{\olambda_{k}}={(m+n)^{n+m-1}(1/\eta_k^\delta+\mu_0^{\bar \delta})^{n+m-1}\over (n-1)!\mu_k^{(n+m)\bar \delta}}}$, {for scalars $\delta,\bar \delta>0$.} 
Then {for all $k$, we have that $H_k = H_k^T$ and $\mathbb E[{H_k\mid\mathcal F_k}]=H_k$ and
$
{\ulambda_{k}\mathbf{I}  \preceq H_{k} \preceq \olambda_k \mathbf{I}}$ both hold in an a.s. fashion.}
\end{itemize}
\end{lemma}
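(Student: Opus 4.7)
The proof plan is to follow the template established for Lemma \ref{rLBFGS-matrix} in \cite{yousefian2017stochastic}, with careful tracking of how the smoothing parameter $\eta_k^{\delta}$ enters the Lipschitz modulus of $\nabla_x F_{\eta_k^{\delta}}(\cdot,\omega)$ alongside the regularization parameter $\mu_k^{\bar\delta}$. First I would establish (a). Recall from \eqref{equ:siyi-LBFGS} that for odd $i>2m$,
\[
 y_i \,=\, \tfrac{1}{N_{i-1}}\sum_{j=1}^{N_{i-1}}\bigl(\nabla_x F_{\eta_i^\delta}(x_i,\omega_{j,i-1})-\nabla_x F_{\eta_i^\delta}(x_{i-1},\omega_{j,i-1})\bigr)+\mu_i^{\bar\delta} s_i.
\]
Since $F_{\eta_i^\delta}(\cdot,\omega)$ is convex by Assumption \ref{assum:convex2}, the first term is the difference of a sample-average gradient of a convex function evaluated at $x_i$ and $x_{i-1}$, which, when paired with $s_i=x_i-x_{i-1}$, gives a nonnegative inner product by monotonicity of the gradient. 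Hence $s_i^T y_i \geq \mu_i^{\bar\delta}\|s_i\|^2 > 0$ whenever $s_i\neq 0$, giving the secant condition.

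Part (b) is a direct verification of the secant equation for the BFGS update. Writing $H_{k,1}$ from \eqref{eqn:H-k-m} with the unique pair $(s_k,y_k)$ available when $m=1$, one can expand $H_{k,1}y_k$ using the rank-two formula and use $y_k^T s_k>0$ (from (a)) to cancel terms and obtain $H_{k,1}y_k=s_k$; this is the classical identity. For $m>1$ the argument iterates along $j=1,\dots,m$.

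The substantive content is (c), which I would prove by getting matching upper bounds on $\|y_i\|^2/(y_i^Ts_i)$ and lower bounds on $y_i^Ts_i/\|s_i\|^2$, then propagating them through the BFGS recursion. The Lipschitz modulus of $\nabla_x F_{\eta_i^\delta}(\cdot,\omega)$ equals $1/\eta_i^\delta$ by Assumption~\ref{assum:convex2}, so the sum in $y_i$ inherits Lipschitz constant $1/\eta_i^\delta$, and adding the regularizer $\mu_i^{\bar\delta}s_i$ yields a ``smoothed-regularized'' Lipschitz modulus $L_i \triangleq 1/\eta_i^\delta+\mu_0^{\bar\delta}$ (using monotonicity $\mu_i^{\bar\delta}\leq\mu_0^{\bar\delta}$). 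Similarly, the regularization enforces a strong-monotonicity type lower bound $y_i^Ts_i\geq \mu_i^{\bar\delta}\|s_i\|^2$. From here the argument tracks the standard trace/determinant bookkeeping for the BFGS update (cf.\ \cite{mokhtari2014res,yousefian2017stochastic,berahas2016multi}): the trace inequality $\mathrm{tr}(H_{k,j})\leq \mathrm{tr}(H_{k,j-1})+\|s_i\|^2/(y_i^Ts_i)\leq \mathrm{tr}(H_{k,j-1})+1/\mu_k^{\bar\delta}$ yields the upper eigenvalue bound $\olambda_k=(m+n)^{n+m-1}L_k^{n+m-1}/((n-1)!\mu_k^{(n+m)\bar\delta})$ after $m$ updates starting from $H_{k,0}=(s_k^Ty_k/y_k^Ty_k)\mathbf I$, and the determinant inequality $\det(H_{k,j})=\det(H_{k,j-1})\cdot (y_i^Ts_i)/(y_i^TH_{k,j-1}y_i)$ combined with $y_i^TH_{k,j-1}y_i\leq \olambda_{k,j-1}\|y_i\|^2\leq \olambda_{k,j-1}L_i^2\|s_i\|^2$ yields the stated lower bound $\ulambda_k = 1/((m+n)L_k)$. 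Finally, $\mathcal F_k$-measurability, symmetry and $\mathbb{E}[H_k\mid\mathcal F_k]=H_k$ all follow because $H_k$ is a deterministic function of $x_0,\dots,x_{k-1}$ and the sampled gradients up to iteration $k-1$.

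The main obstacle is to keep the bookkeeping clean in (c) because the Lipschitz modulus $L_k=1/\eta_k^\delta+\mu_0^{\bar\delta}$ and the regularization floor $\mu_k^{\bar\delta}$ both evolve with $k$, so one must be careful to use the iteration-specific values at each curvature pair $(s_i,y_i)$ and then pass to the worst case across the $m$ retained pairs to obtain the stated closed-form $\ulambda_k,\olambda_k$; in particular one should use $\eta_i^\delta\leq \eta_k^\delta$ isn't monotonic in the right direction, so the bound should be formulated in terms of $\min_{i}\eta_i^\delta$ and $\min_i\mu_i^{\bar\delta}$ over the active window, which, under the update rule \eqref{eqn:mu-k}, reduces to values at index $k$ and thus matches the stated $\ulambda_k,\olambda_k$.
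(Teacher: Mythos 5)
Your overall strategy coincides with the paper's: establish the secant condition from monotonicity of $\nabla_x F_{\eta}(\cdot,\omega)$ plus the regularizer, verify the secant equation from the rank-two formula, and obtain the eigenvalue bounds from co-coercivity (upper bound $\|y_i\|^2/(y_i^Ts_i)\le 1/\eta^\delta+\mu^{\bar\delta}$) and strong monotonicity (lower bound $y_i^Ts_i\ge \mu^{\bar\delta}\|s_i\|^2$) fed into a trace/determinant recursion, followed by induction over odd $k$. However, the key step (c) as you have written it contains two concrete errors. First, your trace inequality $\mathrm{tr}(H_{k,j})\le \mathrm{tr}(H_{k,j-1})+\|s_i\|^2/(y_i^Ts_i)$ does not hold for the inverse-Hessian BFGS update: expanding $\mathrm{tr}\bigl((I-\tfrac{s_iy_i^T}{y_i^Ts_i})H_{k,j-1}(I-\tfrac{y_is_i^T}{y_i^Ts_i})\bigr)$ produces the extra terms $-2\tfrac{y_i^TH_{k,j-1}s_i}{y_i^Ts_i}+\tfrac{(y_i^TH_{k,j-1}y_i)\|s_i\|^2}{(y_i^Ts_i)^2}$, which need not be nonpositive. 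Second, your determinant ratio $\det(H_{k,j})=\det(H_{k,j-1})\,(y_i^Ts_i)/(y_i^TH_{k,j-1}y_i)$ is the DFP identity, not the BFGS one; for this update $\det(H_{k,j})=\det(H_{k,j-1})\,(s_i^TB_{k,j-1}s_i)/(y_i^Ts_i)$ with $B_{k,j-1}=H_{k,j-1}^{-1}$. The paper sidesteps both problems by passing (via Sherman--Morrison--Woodbury) to the inverse matrices $B_{k,j}$, for which $\mathrm{tr}(B_{k,j})\le \mathrm{tr}(B_{k,j-1})+\|y_i\|^2/(y_i^Ts_i)$ does telescope (the subtracted term $\|B_{k,j-1}s_i\|^2/(s_i^TB_{k,j-1}s_i)$ is nonnegative) and $\det(B_{k,j})=\det(B_{k,j-1})\,(y_i^Ts_i)/(s_i^TB_{k,j-1}s_i)$ holds exactly; the trace bound on $B_k$ then yields $\ulambda_k$ and the determinant bound combined with the auxiliary lemma on $a_1\ge (n-1)!P/S^{n-1}$ yields $\olambda_k$. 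Your plan needs to be rerouted through $B_{k,j}$ (or the correct $H$-side identities) for the stated constants to come out.

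A secondary gap: in part (a) you conclude $s_i^Ty_i\ge \mu_i^{\bar\delta}\|s_i\|^2>0$ ``whenever $s_i\neq 0$,'' but nondegeneracy of $s_i$ is precisely what must be argued. The paper establishes it by induction: $H_{k-1}$ is positive definite (being equal to $H_{k-2}$, which is covered by the induction hypothesis), and the regularized sample gradient at $x_{k-1}$ is nonzero by the choice of $\mu_{k-1}$, so the step $x_k-x_{k-1}=-\gamma_{k-1}H_{k-1}(\cdot)$ cannot vanish. Without this, strict positivity of $s_k^Ty_k$ --- and hence well-definedness of the entire update --- is not secured.
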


We now derive a rate statement for the mean sub-optimality.} 
\begin{theorem}[{\bf Convergence in mean}]\label{thm:mean:nonsmooth}
Consider the \eqref{rsVS-SQN} scheme. Suppose Assumptions  ~\ref{assum:convex2}, \ref{state noise} (NS-M), \ref{state noise} (NS-B), \ref{assump:Hk} (NS),  {and \ref{growth}} 
 hold. Let {$\gamma_k=\gamma$, $\mu_k=\mu$, and $\eta_k=\eta$ be chosen such that  \eqref{mainLemmaCond} holds ({where $L=1/\eta$}).} 
{If {$\bar x_K \triangleq \frac{\sum_{k=0}^{K-1}x_k(\ulambda\mu\gamma-C/N_k)}{\sum_{k=0}^{K-1}(\ulambda\mu\gamma-C/N_k)}$}, then \eqref{non_smooth_lemma} holds for $K \geq 1$ and {$C={2(1+\mu\eta)\olambda^2\nu_1^2\gamma^2\over \alpha \eta}$}.}
 \begin{align}\label{non_smooth_lemma}
\left(K\ulambda \mu \gamma{-\sum_{k=0}^{K-1}{C\over N_k}}\right)\mathbb E[{f_{\eta,\mu}(\bar x_{K})-f^*}] 
\nonumber&\leq\mathbb E[f_{\eta,\mu}(x_0)-f^*]+\eta B^2+{\ulambda \mbox{dist}^2(x_0,X^*)\over 2}\mu^2\gamma K\\&+\sum_{k=0}^{K-1}{(1+\mu\eta)\olambda^2( {2\nu_1^2\|x^*\|^2+\nu_2^2})\gamma^2\over 2N_k\eta}.
\end{align}
\end{theorem}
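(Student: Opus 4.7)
My plan is to mirror the structure of Lemma~\ref{lemma:main-ineq} but adapt it to the smoothed regularized objective $f_{\eta,\mu}(x) \triangleq f_\eta(x) + \tfrac{\mu}{2}\|x-x_0\|^2$. By Assumption~\ref{assum:convex2}, $\nabla f_\eta$ is $1/\eta$-Lipschitz, and adding the quadratic regularizer yields a $(1/\eta+\mu)$-Lipschitz gradient and $\mu$-strong convexity of $f_{\eta,\mu}$. First I would write the descent inequality
\[
f_{\eta,\mu}(x_{k+1})\le f_{\eta,\mu}(x_k)+\nabla f_{\eta,\mu}(x_k)^T(x_{k+1}-x_k)+\tfrac{1/\eta+\mu}{2}\|x_{k+1}-x_k\|^2,
\]
substitute the \eqref{rsVS-SQN} update with stochastic gradient error $\bar w_{k,N_k}$, and take conditional expectations on $\mathcal{F}_k$, invoking Assumption~\ref{state noise}~(NS-B) to kill the cross term and Assumption~\ref{state noise}~(NS-M) together with Lemma~\ref{rsLBFGS-matrix}(c) to bound $\|H_k\bar w_{k,N_k}\|^2$ by $\olambda^2(\nu_1^2\|x_k\|^2+\nu_2^2)/N_k$. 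The step-size condition \eqref{mainLemmaCond} (applied with $L=1/\eta$) guarantees that $2-\tfrac{\olambda^2\gamma(1/\eta+\mu)}{\ulambda}\ge 1$, so the quadratic-in-gradient term can be absorbed to yield
\[
\mathbb E[f_{\eta,\mu}(x_{k+1})\mid\mathcal F_k]\le f_{\eta,\mu}(x_k)-\tfrac{\ulambda\gamma}{2}\|\nabla f_{\eta,\mu}(x_k)\|^2+\tfrac{(1/\eta+\mu)\olambda^2(\nu_1^2\|x_k\|^2+\nu_2^2)}{2N_k}\gamma^2.
\]

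Next I would use $\mu$-strong convexity of $f_{\eta,\mu}$ to replace $\|\nabla f_{\eta,\mu}(x_k)\|^2$ by $2\mu(f_{\eta,\mu}(x_k)-f_{\eta,\mu}(x^*_k))$ where $x^*_k$ is its unique minimizer, and then bound $f_{\eta,\mu}(x^*_k)\le f_\eta(x^*)+\tfrac{\mu}{2}\|x^*-x_0\|^2\le f^*+\tfrac{\mu}{2}\mathrm{dist}^2(x_0,X^*)$ by using $f_\eta\le f$ componentwise. This produces the $\tfrac{\ulambda}{2}\mathrm{dist}^2(x_0,X^*)\mu^2\gamma$ term. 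To handle the state-dependent noise, I would split $\|x_k\|^2\le 2\|x_k-x^*\|^2+2\|x^*\|^2$ and use the quadratic growth of $f$ (Assumption~\ref{growth}) together with smoothability $f\le f_\eta+\eta\beta\le f_{\eta,\mu}+\eta\beta$ to bound $\|x_k-x^*\|^2\le \tfrac{2}{\alpha}(f_{\eta,\mu}(x_k)-f^*+\eta\beta)$; the $\eta\beta$ contribution accumulated this way (together with the one arising when bridging $f_\eta(x^*)$ to $f^*$) is the source of the $\eta B^2$ term in the statement. After taking unconditional expectations and moving the $\tfrac{2(1+\mu\eta)\olambda^2\nu_1^2\gamma^2}{\alpha\eta N_k}=C/N_k$ contribution to the left, I obtain the one-step inequality
\[
\bigl(\ulambda\mu\gamma-C/N_k\bigr)\mathbb E[f_{\eta,\mu}(x_k)-f^*]\le \mathbb E[f_{\eta,\mu}(x_k)-f^*]-\mathbb E[f_{\eta,\mu}(x_{k+1})-f^*]+r_k,
\]
with $r_k$ collecting the regularization error $\tfrac{\ulambda\mathrm{dist}^2(x_0,X^*)}{2}\mu^2\gamma$ and the noise term $\tfrac{(1+\mu\eta)\olambda^2(2\nu_1^2\|x^*\|^2+\nu_2^2)\gamma^2}{2N_k\eta}$.

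Finally I would sum this inequality from $k=0$ to $K-1$. The RHS telescopes to $\mathbb E[f_{\eta,\mu}(x_0)-f^*]-\mathbb E[f_{\eta,\mu}(x_K)-f^*]+\sum_{k=0}^{K-1}r_k$, from which I drop the (nonnegative) terminal term after absorbing the one-time $\eta B^2$ arising from the smoothability conversion. On the LHS, since $f_{\eta,\mu}$ is convex and the weights $\omega_k\triangleq \ulambda\mu\gamma-C/N_k$ are nonnegative (ensured by taking $N_k$ large enough so that $\omega_k\ge 0$), Jensen's inequality applied to $\bar x_K=\tfrac{\sum_{k=0}^{K-1}\omega_k x_k}{\sum_{k=0}^{K-1}\omega_k}$ gives $\bigl(\sum_{k=0}^{K-1}\omega_k\bigr)f_{\eta,\mu}(\bar x_K)\le \sum_{k=0}^{K-1}\omega_k f_{\eta,\mu}(x_k)$, which rearranges into \eqref{non_smooth_lemma}.

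The main technical obstacle, in my view, is the careful bookkeeping needed when converting between $f$, $f_\eta$, and $f_{\eta,\mu}$ to isolate exactly the $\eta B^2$ constant on the right-hand side: the quadratic growth property is phrased for $f$, the recursion is natural for $f_{\eta,\mu}$, and the target inequality compares $f_{\eta,\mu}(\bar x_K)$ to $f^*=\min f$. In particular, one must verify that the bridging inequalities $f_\eta\le f\le f_\eta+\eta\beta$ are invoked with the right sign at each step so that no $K$-dependent smoothability error is accumulated (otherwise the rate would degrade), while still ensuring that the step-size/sample-size condition keeps the weights $\omega_k$ uniformly bounded away from zero to retain a meaningful average.
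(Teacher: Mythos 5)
Your proposal matches the paper's proof essentially step for step: the paper simply invokes Lemma~\ref{lemma:main-ineq} with $L=1/\eta$ for the one-step bound (which you re-derive), then applies quadratic growth to the state-dependent noise, rearranges, telescopes, bounds the terminal term $-\mathbb E[f_{\eta,\mu}(x_K)-f^*]$ by $\eta B^2$ via smoothability, and finishes with Jensen's inequality on the weighted average exactly as you describe. The only (immaterial) difference is that you are slightly more careful than the paper about the $\eta\beta$ slack incurred when bridging $f$ to $f_{\eta,\mu}$ inside the quadratic-growth step.
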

\begin{proof} Since Lemma \ref{lemma:main-ineq} {may be invoked}, by taking
expectations on both sides of \eqref{ineq:cond-recursive-F-k}, for
any $k\geq  0$   {letting $\bar{w}_{k,N_k} \triangleq \frac{\sum_{j=1}^{N_k} \left({\nabla_{x}}
{F}_{\eta_k,\mu_k}(x_k,\omega_{j,k})-\nabla f_{\eta_k,\mu_k}(x_k)\right)}{N_k},$} and by letting  {${{\ulambda}\triangleq {1\over (m+n)(1/\eta^\delta+\mu^{\bar \delta})}}$,
{${\olambda}\triangleq {(m+n)^{n+m-1}(1/\eta^\delta+\mu^{\bar \delta})^{n+m-1}\over (n-1)!\mu^{(n+m)\bar \delta}}$}}, {  using the quadratic growth property i.e. $\|x_k-x^*\|^2\leq {2\over \alpha}\left(f(x)-f(x^*)\right)$ and the fact that $\|x_k-x^*+x^*\|^2\leq 2\|x_k-x^*\|^2+2\|x^*\|^2$, we obtain the following}
\begin{align*}
\mathbb E[{f_{{\eta},{\mu}}(x_{k+1})-f^*}] 
& \leq \left(1-{{\ulambda}}{\mu\gamma}{+{2(1+\mu \eta)\olambda^2\nu_1^2\gamma^2\over \alpha N_k\eta}}\right)\mathbb E[{f_{\eta,{\mu}}(x_k)-f^*}]
 + {\ulambda \mbox{dist}^2(x_0,X^*)\over 2}\mu^2\gamma\\& +{(1+\mu\eta)\olambda^2( {2\nu_1^2\|x^*\|^2+\nu_2^2})\gamma^2\over 2N_k\eta}
\end{align*}
\begin{align*}
\implies \left( \ulambda{\mu\gamma}{-{2(1+\mu \eta)\olambda^2\nu_1^2\gamma^2\over \alpha N_k\eta}}\right)\mathbb E[{f_{\eta,{\mu}}(x_k)-f^*}] 
&  \leq \mathbb E[{f_{\eta,{\mu}}(x_k)-f^*}]-   \mathbb E[{f_{{\eta},{\mu}}(x_{k+1})-f^*}] \\ 
 +{\ulambda \mbox{dist}^2(x_0,X^*)\mu^2\gamma\over 2} 
& +{(1+\mu\eta)\olambda^2( {2\nu_1^2\|x^*\|^2+\nu_2^2})\gamma^2\over 2N_k\eta} .\end{align*}
Summing from $k=0$ to $K-1$ and by invoking {Jensen's inequality}, we obtain the following
 \begin{align*}
\left(K\ulambda \mu \gamma{-\sum_{k=0}^{K-1}{C\over N_k}}\right)\mathbb E[{f_{\eta,\mu}(\bar x_{K})-f^*}] 
&\leq\mathbb E[f_{\eta,\mu}(x_0)-f^*]-\mathbb E[f_{\eta,\mu}(x_K)-f^*]\\&+{\ulambda \mbox{dist}^2(x_0,X^*)\over 2}\mu^2\gamma K+\sum_{k=0}^{K-1}{(1+\mu\eta)\olambda^2( {2\nu_1^2\|x^*\|^2+\nu_2^2})\gamma^2\over 2N_k\eta},
\end{align*}
where {$C={2(1+\mu\eta)\olambda^2\nu_1^2\gamma^2\over \alpha\eta}$} and {$\bar x_K \triangleq \frac{\sum_{k=0}^{K-1}x_k(\ulambda\mu\gamma-C/N_k)}{\sum_{k=0}^{K-1}(\ulambda\mu\gamma-C/N_k)}$}.  Since $\mathbb E[{f(x)}]\leq  \mathbb
E[f_{\eta}(x)]+\eta_kB^2$ and $f_\mu(x)=f(x)+{\mu\over 2}\|x-x_0\|^2$, {we have that} $-\mathbb E[f_{\eta,\mu}(x_K)-f^*]\leq -{\mathbb{E}}[f_\mu(x_K)-f^*]+\eta B^2\leq \eta B^2$. Therefore, we obtain the following:
 \begin{align*}
\left(K\ulambda \mu \gamma{-\sum_{k=0}^{K-1}{C\over N_k}}\right)\mathbb E[{f_{\eta,\mu}(\bar x_{K})-f^*}] &
\leq\mathbb E[f_{\eta,\mu}(x_0)-f^*]+\eta B^2\\&+{\ulambda \mbox{dist}^2(x_0,X^*)\over 2}\mu^2\gamma K+\sum_{k=0}^{K-1}{(1+\mu\eta)\olambda^2( {2\nu_1^2\|x^*\|^2+\nu_2^2})\gamma^2\over 2N_k\eta}. 
\end{align*}
\end{proof}

{We refine this result for a set of parameter sequences.} 
\begin{theorem}[{\bf Rate statement and oracle complexity}]\label{thm:rate K}
Consider \eqref{rsVS-SQN} and  suppose Assumptions ~\ref{assum:convex2}, \ref{state noise} (NS-M), \ref{state noise} (NS-B), \ref{assump:Hk} (NS),  {and \ref{growth}}  hold, $\gamma {\triangleq} c_\gamma K^{-1/3+\bar \varepsilon}$, $\mu {\triangleq} {K^{-1/3}}$, $\eta \triangleq K^{-1/3}$ and $N_k\triangleq \lceil N_0{(k+1)}^{a}\rceil$, where $\bar \varepsilon \triangleq \tfrac{5\varepsilon}{3}$, $\varepsilon>0$, {$N_0>{C\over \ulambda \mu \gamma}$},  {$C={2(1+\mu\eta)\olambda^2\nu_1^2\gamma^2\over \alpha\eta}$} and $a>1$. Let $\delta={\varepsilon\over n+m-1}$ and $\bar \delta={\varepsilon\over n+m}$. 

\noindent (i) For any $K \geq 1$, 
$ \mathbb{E}[f(\bar x_{ K})]-f^*\leq {\mathcal O}(K^{-1/3}).$ 

\noindent  (ii) Let  ${\epsilon>0}$, {$a = (1+\epsilon)$}, and $ K\geq 1$ such that $\mathbb E[f(\bar x_{ K})]-f^*\leq {\epsilon}$. {Then{,} {$\sum_{k=0}^{ K}N_k\leq  \mathcal O\left({ {\epsilon}^{-{(2+\varepsilon)\over 1/3}}}\right)$}}. 
\end{theorem}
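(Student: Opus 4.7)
The starting point is inequality \eqref{non_smooth_lemma} from the preceding convergence-in-mean theorem,
\begin{align*}
\Bigl(K\ulambda\mu\gamma - \sum_{k=0}^{K-1}\tfrac{C}{N_k}\Bigr)\mathbb{E}[f_{\eta,\mu}(\bar x_K)-f^*]
 &\le \Delta_0 + \eta B^2 + \tfrac{\ulambda\,\mbox{dist}^2(x_0,X^*)}{2}\mu^2\gamma K \\
 & \quad + \sum_{k=0}^{K-1}\tfrac{(1+\mu\eta)\olambda^2(2\nu_1^2\|x^*\|^2+\nu_2^2)\gamma^2}{2N_k\eta},
\end{align*}
with $\Delta_0\triangleq\mathbb{E}[f_{\eta,\mu}(x_0)-f^*]$; everything that follows is exponent bookkeeping after substituting the prescribed sequences.

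The first step is to read off asymptotic scalings from Lemma~\ref{rsLBFGS-matrix}(c). With $\eta=\mu=K^{-1/3}$ and the identities $\delta(n+m-1)=\varepsilon=\bar\delta(n+m)$, one obtains $\ulambda=\Theta(K^{-\varepsilon/[3(n+m-1)]})$ and $\olambda=\Theta(K^{2\varepsilon/3})$. Since $a>1$, the tail $\sum_{k\ge 0}(k+1)^{-a}$ is bounded, and the assumption $N_0>C/(\ulambda\mu\gamma)$ guarantees that $K\ulambda\mu\gamma - \sum_{k=0}^{K-1} C/N_k = \Theta(K\ulambda\mu\gamma)=\Theta(K^{1/3+\bar\varepsilon-\varepsilon/[3(n+m-1)]})$. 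Dividing each right-hand-side term by this leading coefficient would yield: (a) the $\mu^2\gamma K\ulambda$ term reduces to $\Theta(\mu)=\Theta(K^{-1/3})$; (b) the $\Delta_0$ and $\eta B^2$ terms decay strictly faster than $K^{-1/3}$ because $\bar\varepsilon=5\varepsilon/3>\delta/3=\varepsilon/[3(n+m-1)]$; (c) the sum term, whose per-summand exponent in $K$ is $4\varepsilon/3+2\bar\varepsilon-1/3$ (combining $\olambda^2$, $\gamma^2$, and $1/\eta$), produces after normalization a factor of order $K^{-2/3+3\varepsilon+\varepsilon/[3(n+m-1)]}$, which is $\mathcal{O}(K^{-1/3})$ once $\varepsilon$ is small enough that $3\varepsilon+\varepsilon/[3(n+m-1)]\le 1/3$. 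Converting $\mathbb{E}[f_{\eta,\mu}(\bar x_K)-f^*]$ back to $\mathbb{E}[f(\bar x_K)-f^*]$ costs an extra $\eta\beta=\Theta(K^{-1/3})$ via $(1,\beta)$-smoothability and non-negativity of the quadratic regularizer, completing~(i).

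For~(ii), I would invert~(i): $\mathbb{E}[f(\bar x_K)-f^*]\le C_0 K^{-1/3}\le\epsilon$ forces $K=\lceil(C_0/\epsilon)^3\rceil=\Theta(\epsilon^{-3})$, and then using $N_k\le 2N_0(k+1)^a$,
$$\sum_{k=0}^{K}N_k \le \tfrac{2N_0}{a+1}(K+1)^{a+1}=\mathcal{O}(K^{a+1})=\mathcal{O}(\epsilon^{-3(a+1)}),$$
which under $a=1+\varepsilon$ becomes $\mathcal{O}(\epsilon^{-3(2+\varepsilon)})=\mathcal{O}(\epsilon^{-(2+\varepsilon)/(1/3)})$, as claimed. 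The principal obstacle is step~(c): $\olambda$ grows polynomially in $K$ through both $1/\eta^\delta$ and $\mu^{-\bar\delta(n+m)}$ and then interacts with $\gamma^2$, the $1/\eta$ Lipschitz factor, and the leading coefficient $K\ulambda\mu\gamma$, yielding four competing contributions whose balance dictates the specific calibration $\bar\varepsilon=5\varepsilon/3$, $\delta=\varepsilon/(n+m-1)$, $\bar\delta=\varepsilon/(n+m)$. A secondary technicality is to verify that $c_\gamma$ can be taken small enough that the step-size condition~\eqref{mainLemmaCond} underlying \eqref{non_smooth_lemma} is satisfied at the prescribed $\eta,\mu,\gamma$.
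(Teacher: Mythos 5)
Your proposal is correct and follows essentially the same route as the paper: divide \eqref{non_smooth_lemma} by the coefficient $K\ulambda\mu\gamma-\sum_{k}C/N_k$, substitute the scalings $\ulambda=\mathcal{O}(K^{-\delta/3})$ and $\olambda=\mathcal{O}(K^{2\varepsilon/3})$ from Lemma~\ref{rsLBFGS-matrix}(c), and track exponents term by term, with your identified dominant contributions ($\Theta(\mu)$, $\eta B^2$, and the $K^{-2/3+3\varepsilon+\delta/3}$ sum term) matching the paper's. Your part (ii) computation is also the one the paper intends (it simply defers to the analogous argument in Theorem~\ref{oracle smooth}).
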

\begin{proof}
(i) {First, note that for $a>1$ and $N_0>{C\over \ulambda\mu\gamma}$ we have $\sum_{k=0}^{K-1} {C\over N_k}<\infty$. Therefore we can let $C_4\triangleq \sum_{k=0}^{K-1}{C\over N_k}. $} { Dividing both sides} of \eqref{non_smooth_lemma} by $K\ulambda\mu\gamma{-C_4}$ {and by recalling} that $f_\eta(x)\leq f(x)\leq f_\eta(x)+\eta B^2$ and $f(x)\leq f_\mu(x)$, we obtain
\begin{align*}
\mathbb E[{f(\bar x_{K})-f^*}] 
& \leq{\mathbb E[f_\mu(x_0)-f^*]\over K\ulambda \mu \gamma{-C_4}}+{\eta B^2\over K\ulambda \mu \gamma{-C_4}}+\frac{{\ulambda \mbox{dist}^2(x_0,X^*)\over 2}\mu^2\gamma K}{K\ulambda\mu\gamma{-C_4}} \\&+\frac{\sum_{k=0}^{K-1}{(1+\mu\eta)\olambda^2( {2\nu_1^2\|x^*\|^2+\nu_2^2})\gamma^2\over 2N_k\eta}}{K\ulambda\mu\gamma{-C_4}}+\eta B^2. 
\end{align*}
Note that by choosing $\gamma=c_\gamma K^{-1/3+\bar \varepsilon}$, $\mu={K^{-1/3}}$ and $\eta=K^{-1/3}$, where $\bar \varepsilon=5/3\varepsilon$, inequality \eqref{mainLemmaCond} is satisfied for sufficiently small $c_\gamma$. By choosing$N_k=\lceil N_0{(k+1)}^a\rceil\geq N_0 (k+2)^a$ for any $a>1$ and {$N_0>{C\over \ulambda\mu\gamma}$}, we have that 
\begin{align*}
& \sum_{k=0}^{K-1}{1\over (k+1)^a}
\leq 1+\int_{0}^{K-1} (x+1)^{-a}dx\leq 1+{K^{1-a}\over 1-a} \\ 
\implies &\mathbb E[{f(\bar x_{K})-f^*}] 
\leq{C_1\over K\ulambda \mu \gamma{-C_4}}+{\eta B^2\over K\ulambda \mu \gamma{-C_4}}+{C_2\ulambda\mu^2\gamma K \over K\ulambda\mu\gamma{-C_4}}+{C_3(1+\mu\eta)\olambda^2\gamma^2\over \eta N_0(K \mu \gamma{-C_4})}(1+K^{1-a})+\eta B^2,
\end{align*}
where $C_1=\mathbb E[f_\mu(x_0)-f^*]$, $C_2={ \mbox{dist}^2(x_0,X^*)\over 2}$ and $C_3={ {2\nu_1^2\|x^*\|^2+\nu_2^2}\over 2(1-a)}$. Choosing the parameters $\gamma,\mu$ and $\eta$ as stated and noting that  {${{\ulambda}= {1\over (m+n)(1/\eta^\delta+\mu^{\bar \delta})}}=\mathcal O(\eta^\delta)= \mathcal O(K^{-\delta/3})$ and $\olambda={(m+n)^{n+m-1}(1/\eta^\delta+\mu^{\bar \delta})^{n+m-1}\over (n-1)!\mu^{(n+m)\bar \delta}}=\mathcal O(\eta^{-\delta(n+m-1)/\mu^{\bar \delta(n+m)}})= \mathcal O(K^{2\varepsilon/3})$, where we used the assumption that $\delta={\varepsilon\over n+m-1}$ and $\bar \delta={\varepsilon\over n+m}$}. Therefore, we obtain
$\mathbb E[{f(\bar x_{K})-f^*}] 
\leq \mathcal O(K^{-1/3-5\varepsilon/3}+\delta/3)+\mathcal O(K^{-2/3-5\varepsilon/3+\delta/3})+\mathcal O(K^{-1/3})+\mathcal O(K^{-2/3+3\varepsilon})+\mathcal O(K^{-1/3})= \mathcal O(K^{-1/3}).$ 

(ii) The proof is similar to part (ii) of Theorem \ref{oracle smooth}. 
\end{proof}

\begin{remark}
{Note that in Theorem \ref{thm:rate K} we choose steplength, regularization,
and smoothing parameters  {as constant parameters in accordance with the length of the simulation trajectory $K$, i.e.  $\gamma,\mu,\eta$ are constants.}  This is akin
to the avenue chosen by Nemirovski et al.~\cite{nemirovski_robust_2009} where
the steplength is chosen in accordance with the length of the simulation
trajectory $K$.}   
\end{remark}

Next, we relax Assumption
\ref{growth} (quadratic growth property) and impose a stronger  bound on the
conditional second moment of the sampled gradient.
\begin{assumption}\label{non growth}
\vvs{Let $\bar{w}_{k,N_k} \triangleq \nabla_x f_{\eta_k}(x_k) - \tfrac{\sum_{j=1}^{N_k} \nabla_x F_{\eta_k}(x_k,\omega_{j,k})}{N_k}$. Then 
there exists $\nu>0$ such that $\mathbb{E}[\|\bar{w}_{k,N_k}\|^2\mid \mathcal{F}_k] \leq \tfrac{\nu^2}{N_k}$ and $\mathbb{E}[\bar{w}_{k,N_k} \mid \mathcal{F}_k] = 0$ {hold} almost surely 
 for all $k$ and $\eta_k > 0$, where $\mathcal{F}_k
	\triangleq \sigma\{x_0, x_1, \hdots, x_{k-1}\}$.}
	\end{assumption}
\begin{corollary}
[{\bf Rate statement and Oracle complexity}]
Consider the \eqref{rsVS-SQN}  scheme.  Suppose Assumptions ~\ref{assum:convex2}, \ref{assump:Hk} (NS) and \ref{non growth} hold and $\gamma {\triangleq} c_\gamma K^{-1/3+\bar \varepsilon}$, $\mu {\triangleq} {K^{-1/3}}$, $\eta \triangleq K^{-1/3}$ and $N_k\triangleq \lceil{(k+1)}^{a}\rceil$, where $\bar \varepsilon \triangleq \tfrac{5\varepsilon}{3}$, $\varepsilon>0$ and $a>1$.

\noindent (i) For any $K \geq 1$, $
 \mathbb E[f(\bar x_{ K})]-f^*\leq \mathcal O(K^{-1/3}). $
 
\noindent  (ii) Let  ${\epsilon>0}$, $a = (1+\epsilon)$, and $ K\geq 1$ such that $\mathbb E[f(\bar x_{ K})]-f^*\leq {\epsilon}$. {Then{,} {$\sum_{k=0}^{ K}N_k\leq  \mathcal O\left({ {\epsilon}^{-{(2+\varepsilon)\over 1/3}}}\right)$}}. 

\end{corollary}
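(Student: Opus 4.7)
The plan is to mirror the proof of Theorem~\ref{thm:rate K}, but to re-derive the per-iteration recursion without invoking Assumption~\ref{growth} (quadratic growth). The key observation is that Assumption~\ref{non growth} supplies the stronger bound $\mathbb{E}[\|\bar w_{k,N_k}\|^2\mid\mathcal F_k]\le \nu^2/N_k$ that is independent of $\|x_k\|^2$. Hence the state-dependent noise term $\nu_1^2\|x_k\|^2+\nu_2^2$ appearing in Lemma~\ref{lemma:main-ineq} can be replaced by the constant $\nu^2$, and there is no need to convert $\|x_k\|^2$ into a sub-optimality bound via the growth inequality.

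First, I would rerun the proof of Lemma~\ref{lemma:main-ineq} verbatim, except that in the estimate of ``Term 2'' the bound $\mathbb{E}[\|\bar w_{k,N_k}\|^2\mid\mathcal F_k]\le \nu^2/N_k$ is used. This yields, for constant parameters $(\gamma,\mu,\eta)$ satisfying~\eqref{mainLemmaCond} with $L=1/\eta$, the cleaner recursion
\begin{align*}
\mathbb{E}[f_{\eta,\mu}(x_{k+1})-f^*\mid \mathcal{F}_k]
&\le (1-\ulambda\mu\gamma)(f_{\eta,\mu}(x_k)-f^*)
+\tfrac{\ulambda\,\mathrm{dist}^2(x_0,X^*)}{2}\mu^2\gamma
+\tfrac{(1+\mu\eta)\olambda^2\nu^2\gamma^2}{2N_k\eta}.
\end{align*}
Crucially, the coefficient multiplying $f_{\eta,\mu}(x_k)-f^*$ no longer contains the problematic $C/N_k$ correction that in Theorem~\ref{thm:mean:nonsmooth} arose from the use of quadratic growth to absorb $\|x_k\|^2$.

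Second, I would take unconditional expectations, telescope from $k=0$ to $K-1$, use the $(1,\beta)$-smoothing bound $f(x)\le f_\eta(x)+\eta B^2$, and divide by $K\ulambda\mu\gamma$. Invoking Jensen's inequality for $\bar x_K$ (now weighted uniformly, since the coefficients are constant in $k$) gives
\begin{align*}
\mathbb{E}[f(\bar x_K)-f^*]
\le \tfrac{\mathbb{E}[f_\mu(x_0)-f^*]+\eta B^2}{K\ulambda\mu\gamma}
+\tfrac{\mathrm{dist}^2(x_0,X^*)\,\mu}{2}
+\tfrac{(1+\mu\eta)\olambda^2\nu^2\gamma}{2\eta\,\ulambda\mu\,K}\sum_{k=0}^{K-1}\tfrac{1}{N_k}
+\eta B^2.
\end{align*}
With $N_k=\lceil(k+1)^a\rceil$ and $a=1+\varepsilon>1$, the sum $\sum_{k=0}^{K-1}N_k^{-1}$ is uniformly bounded, so the last fractional term is $\mathcal{O}(1/(\ulambda\mu\gamma\,K))$.

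Third, plugging in $\gamma=c_\gamma K^{-1/3+5\varepsilon/3}$, $\mu=K^{-1/3}$, $\eta=K^{-1/3}$ together with $\delta=\varepsilon/(n+m-1)$ and $\bar\delta=\varepsilon/(n+m)$ in Lemma~\ref{rsLBFGS-matrix} gives $\ulambda=\mathcal{O}(K^{-\delta/3})$ and $\olambda=\mathcal{O}(K^{2\varepsilon/3})$, and each of the four summands is readily verified to be $\mathcal{O}(K^{-1/3})$, exactly as in Theorem~\ref{thm:rate K}(i). For part~(ii), setting $\mathcal{O}(K^{-1/3})\le\epsilon$ forces $K=\mathcal{O}(\epsilon^{-3})$, and the oracle complexity follows by bounding $\sum_{k=0}^{K}N_k \le \int_0^{K+1}(x+1)^{1+\varepsilon}\,dx=\mathcal{O}(K^{2+\varepsilon})=\mathcal{O}(\epsilon^{-(2+\varepsilon)/(1/3)})$, as in part~(ii) of Theorem~\ref{oracle smooth}.

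The main obstacle is largely bookkeeping: one must verify that every appearance of the state-dependent noise term $\nu_1^2\|x_k\|^2+\nu_2^2$ in the proofs of Lemma~\ref{lemma:main-ineq} and Theorems~\ref{thm:mean:nonsmooth}--\ref{thm:rate K} can be replaced by $\nu^2$ without appealing to Assumption~\ref{growth}, and that the condition \eqref{mainLemmaCond} still holds with $L=1/\eta$ and the chosen constant $c_\gamma$ for all $K$ large enough. No new analytical ingredient is needed beyond noticing that Assumption~\ref{non growth} directly decouples the noise from $x_k$, which is precisely what quadratic growth had to simulate in the state-dependent regime.
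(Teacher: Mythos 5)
Your proposal is correct and follows exactly the route the paper intends: the corollary is left without an explicit proof precisely because it is Theorem~\ref{thm:rate K} rerun with the uniform noise bound of Assumption~\ref{non growth} substituted for the state-dependent one, which (as you observe) eliminates both the $C/N_k$ correction in the contraction coefficient and the need for Assumption~\ref{growth}, and reduces $\bar x_K$ to a plain average. The remaining parameter bookkeeping and the oracle-complexity count match the paper's own computations in Theorems~\ref{thm:mean:nonsmooth}--\ref{thm:rate K}.
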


\jal{\begin{remark}
It is worth emphasizing that the unavailability of problem parameters such as the
Lipschitz constant and the strong convexity modulus may render some methods 
unimplementable. {In fact, this is a prime motivation for utilizing
smoothing, regularization, and diminishing steplengths in this work; smoothing and regularization  address the unavailability
of Lipschitz and strong convexity constants, respectively while  
diminishing steplengths obviate the need for knowing both sets of parameters. Remark 2 cover some instances where  such
constants may indeed be available. We briefly summarize avenues for contending with the absence of
such parameters.} 

\begin{itemize}
	\item [1.] {\em Absence/unavailability of Lipschitz constant in strongly-convex regimes ({\bf sVS-SQN}).} In the absence of a Lipschitz constant but in the presence of strong convexity, we propose two distinct avenues. We can employ Moreau-smoothing with fixed parameter $\eta$ or iterative smoothing which relies on a diminishing sequence $\{\eta_k\}$. This framework relies on a modified L-BFGS update, referred to as smoothed L-BFGS and denoted by ({\bf sL-BFGS}). {This case is discussed in section 3.2 and the main results are provided by Theorem 3.} 
	\item [2.] {\em Absence/unavailability of strong convexity constants in smooth regimes ({\bf rVS-SQN}).} 
If the problem is either merely convex or has an unknown strong convexity modulus but satisfies $L$-smoothness, then we present a regularization scheme, extending our prior work in this area to the variance-reduced arena. The resulting scheme, referred to as the regularized VS-SQN and denoted by ({\bf rVS-SQN}), is reliant on the analogous L-BFGS update. {This case is discussed in section 4.1 and the main results are provided by Theorems 4 and 5.} 
	\item [3.] {\em Absence/unavailability of Lipschitz constants and strong convexity constants.}   
If both the Lipschitz and the strong convexity constants are unavailable, then we may overlay smoothing and regularization. This requires a regularized and smoothed L-BFGS update, referred to as ({\bf rsL-BFGS}), and the resulting scheme is referred to as the regularized smoothed VS-SQN and denoted by ({\bf rsVS-SQN}). {This case is discussed in section 4.2 and the main results are provided by Theorems 6 and 7.} 
	\item [4.] {\em Addressing unavailability of Lipschitz and strong convexity constants via diminishing steplength schemes.} One may also obviate the need for knowing the constants by utilizing diminishing steplength sequences. This allows for deriving rate statements for a shifted recursion via some well known results on deterministic recursions. This has now been added in Theorem 1 (iii) ({\bf (VS-SQN)} in the smooth and strongly convex regime) and Theorem 3 (iii) ({\bf (sVS-SQN)} in the nonsmooth and strongly convex regime).   
 	\item[5.] {\em Stochastic line-search techniques.}  One avenue to contend with the absence of
Lipschitzian guarantees lies in leveraging line-search techniques in stochastic
regimes~\cite{jofre2017variance,cartis18global,paquette20stochastic,shashaani18astro}.
This remains a goal of future work.  
\end{itemize}
\end{remark}}
\section{Numerical results}\label{sec:5} In this section, we compare the behavior of the proposed VS-SQN schemes with their accelerated/unaccelerated gradient counterparts on a class of strongly convex/convex and smooth/nonsmooth stochastic optimization problems {with the intent of examining empirical error and sparsity of estimators (in machine learning problems) as well as the ability to contend with ill-conditioning.}  

\noindent {\bf Example 1.} First, we consider the logistic regression problem, defined  as follows:
\begin{align}\tag{LRM}
\min_{x \in \mathbb R^n} \ f(x) \triangleq \frac{1}{N}\sum_{i=1}^N\ln \left(1+{\exp} \left(-u_i^Txv_i\right)\right),
\end{align}
where $u_i \in \mathbb R^n$ is the input binary vector associated with article
$i$ and $v_i \in \{-1,1\}$ represents the class of the $i$th article. A {$\mu$-}regularized variant of such a problem is defined as follows.
\begin{align}\label{logisticReg}\tag{reg-LRM}
\min_{x \in \mathbb R^n} \ f(x) \triangleq \frac{1}{N}\sum_{i=1}^N\ln \left(1+{\exp}\left(-u_i^Txv_i\right)\right)+\frac{\mu}{2}\|x\|^2.
\end{align}
We consider the {\sc sido0} dataset~\cite{lewis2004rcv1} where $N = 12678$ and $n = 4932$. 

\noindent {\bf (1.1) Strongly convex and smooth problems}: To apply \eqref{VS-SQN}, we consider (Reg-LRM) where the problem is strongly convex and 
$\mu=0.1$. We compare the behavior of the scheme with an accelerated gradient scheme~\cite{jalilzadeh2018optimal} and set the overall sampling buget equal to $1e4$. {We observe that \eqref{VS-SQN} competes well with ({\bf VS-APM}).} (see Table~\ref{SC_tab_smooth} and Fig.~\ref{fig} (a)).  

\begin{table}[htb]
	\centering
	\scriptsize
	\begin{tabular}{|c|c|c||c|c|} \hline 
	&\multicolumn{2}{|c||}{SC, smooth}&\multicolumn{2}{|c|}{SC, nonsmooth \aj{(Moreau smoothing)}}\\ \hline
	& {\bf VS-SQN}& {\bf VS-APM} &{\bf sVS-SQN}& {\bf sVS-APM} \\ \hline  \hline
	sample size: $N_k$& $\rho^{-k}$&$\rho^{-k}$&$\lfloor q^{-k}\rfloor$&$\lfloor q^{-k}\rfloor$\\ \hline
	steplength: $\gamma_k$&0.1&0.1&$\eta_k^2$&$\eta_k^2$\\ \hline
	smoothing: $\eta_k$&-&-&$0.1$&$0.1$\\ \hline
	$f(x_k)$& $5.015$e-$1$&$5.015$e-$1$&$8.905$e-$1$&$1.497$e+$0$\\ \hline
	 	\end{tabular}	
			\caption{{\bf sido0:} SC, smooth and nonsmooth}
	\label{SC_tab_smooth}
	\vspace{-0.2in}
\end{table}
\jal{Next, \uvs{in Table \ref{cons_dim}, we compare the behavior of {\bf (VS-SQN)} in constant versus diminishing steplength regimes and observe that the distinctions are modest for these problem instances.}}
\begin{table}[htb]
	\centering
	\scriptsize
	\begin{tabular}{|c|c|c|c||c|c|c|} \hline 
	&\multicolumn{3}{|c||}{SC, smooth}&\multicolumn{3}{|c|}{SC, nonsmooth \aj{(Moreau smoothing)}}\\ \hline
	& {\bf VS-SQN}& {\bf VS-SQN}& {\bf {VS-SQN}} &{\bf sVS-SQN}& {\bf sVS-SQN} &{\bf sVS-SQN} \\ \hline  \hline
	sample size: $N_k$& $\rho^{-k}$&$k$&$k^2$&$\lfloor q^{-k}\rfloor$&$k$&$k^2$\\ \hline
	steplength: $\gamma_k$&0.1&$1/\sqrt k$&$1/\sqrt k$&$\eta_k^2$&$1/ \sqrt{k}$&$1/\sqrt k$\\ \hline
	smoothing: $\eta_k$&-&-&-&$0.1$&$0.1$&$0.1$\\ \hline
	$f(x_k)$& $5.015$e-$1$&$5.019$e-$1$&$5.098$e-$1$&$8.905$e-$1$&$1.358$e+$0$&$8.989$e-$1$\\ \hline
	 	\end{tabular}\caption{{\bf sido0:} SC, smooth and nonsmooth}\label{cons_dim}	
	\vspace{-0.2in}
\end{table}

\noindent {\bf (1.2) Strongly convex and nonsmooth}: We consider a nonsmooth
variant where an $\ell_1$ regularization is added with  $\lambda=\mu=0.1$:
\begin{align}\label{SC nonsmooth LRM}
\min_{x \in \mathbb R^n} f(x):=\frac{1}{N}\sum_{i=1}^N\ln \left(1+\mathbb{E}\left(-u_i^Txv_i\right)\right)+{\mu\over 2}\|x\|^2+\lambda\|x\|_1.
\end{align}
From~\cite{beck12smoothing}, a smooth approximation of $\|x\|_1$ is given by the following
$$\sum_{i=1}^n H_\eta (x_i) = \begin{cases} x_i^2/2\eta, &\mbox{if } |x_i|\leq \eta \\ 
|x_i|-\eta/2, & \mbox{o.w.}  \end{cases},$$ where $\eta$ is a smoothing parameter.
The perfomance of \eqref{sVS-SQN} is shown in Figure \ref{fig} (b) while
parameter choices are provided in Table \ref{SC_tab_smooth} and the total
sampling budget is $1e5$. {We see that empirical behavior of \eqref{VS-SQN} } and
\eqref{sVS-SQN} is similar to {\bf (VS-APM)}{~\cite{jalilzadeh2018optimal} and {\bf  (rsVS-APM)}~\cite{jalilzadeh2018optimal}, respectively. Note that while
in the strongly convex regimes, both schemes {display} similar (linear) rates, we do
not have a rate statement for smoothed ({\bf sVS-APM})~\cite{jalilzadeh2018optimal}.}  

	\begin{figure}[htb]
\vspace{-0.1in}
	\centering
	{
\includegraphics[scale=0.085]{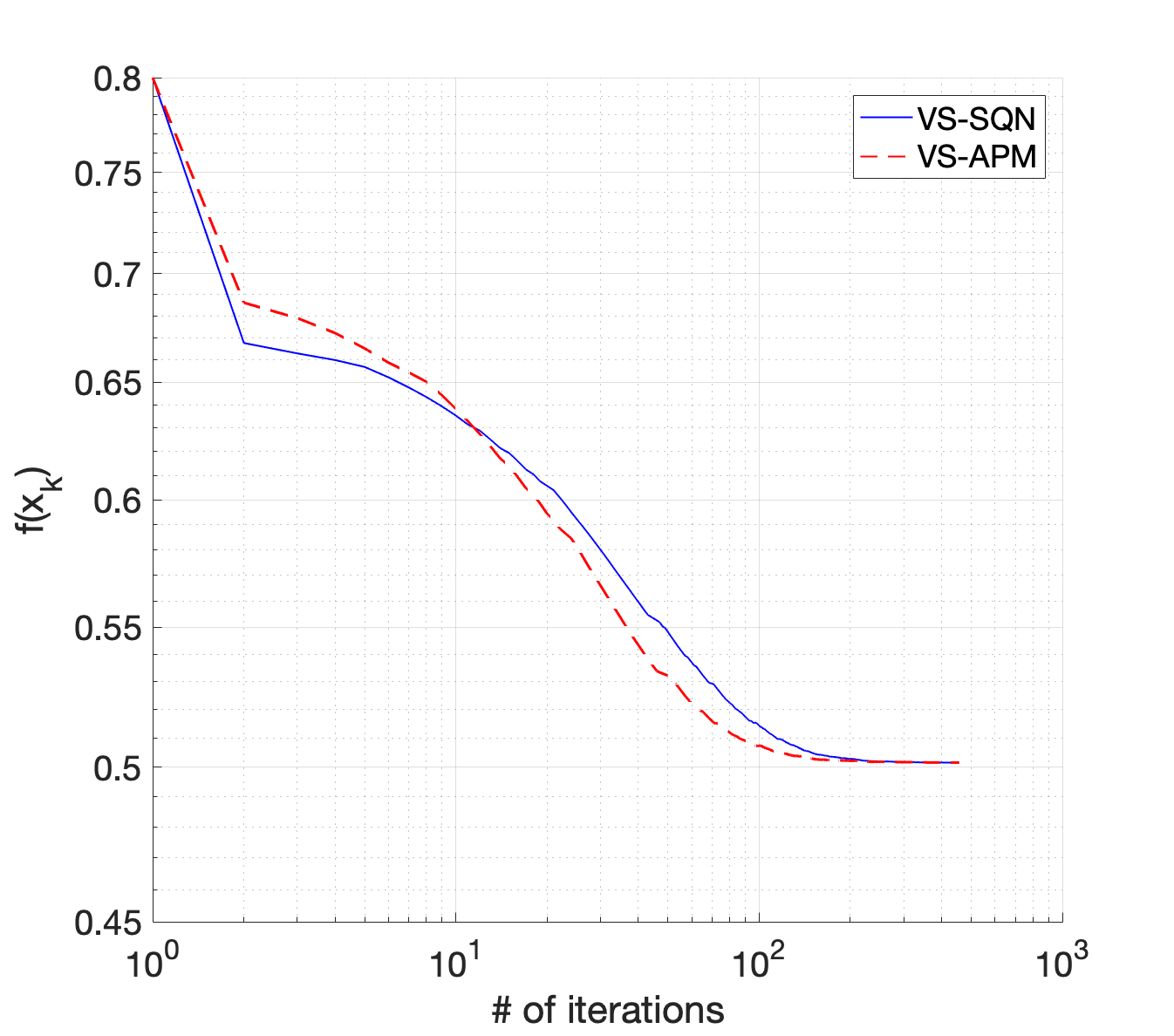}
	\includegraphics[scale=0.085]{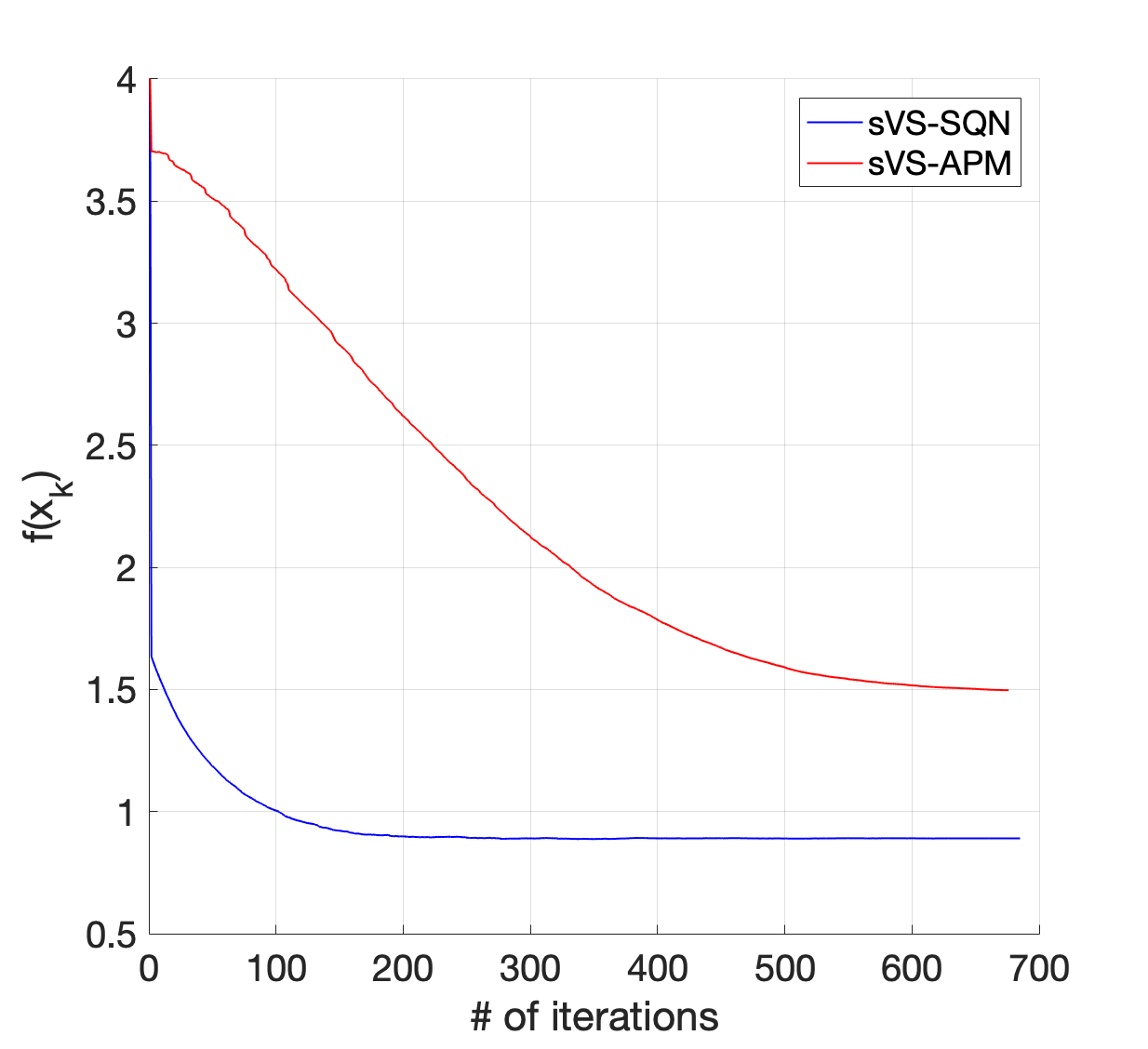}
	\includegraphics[scale=0.085]{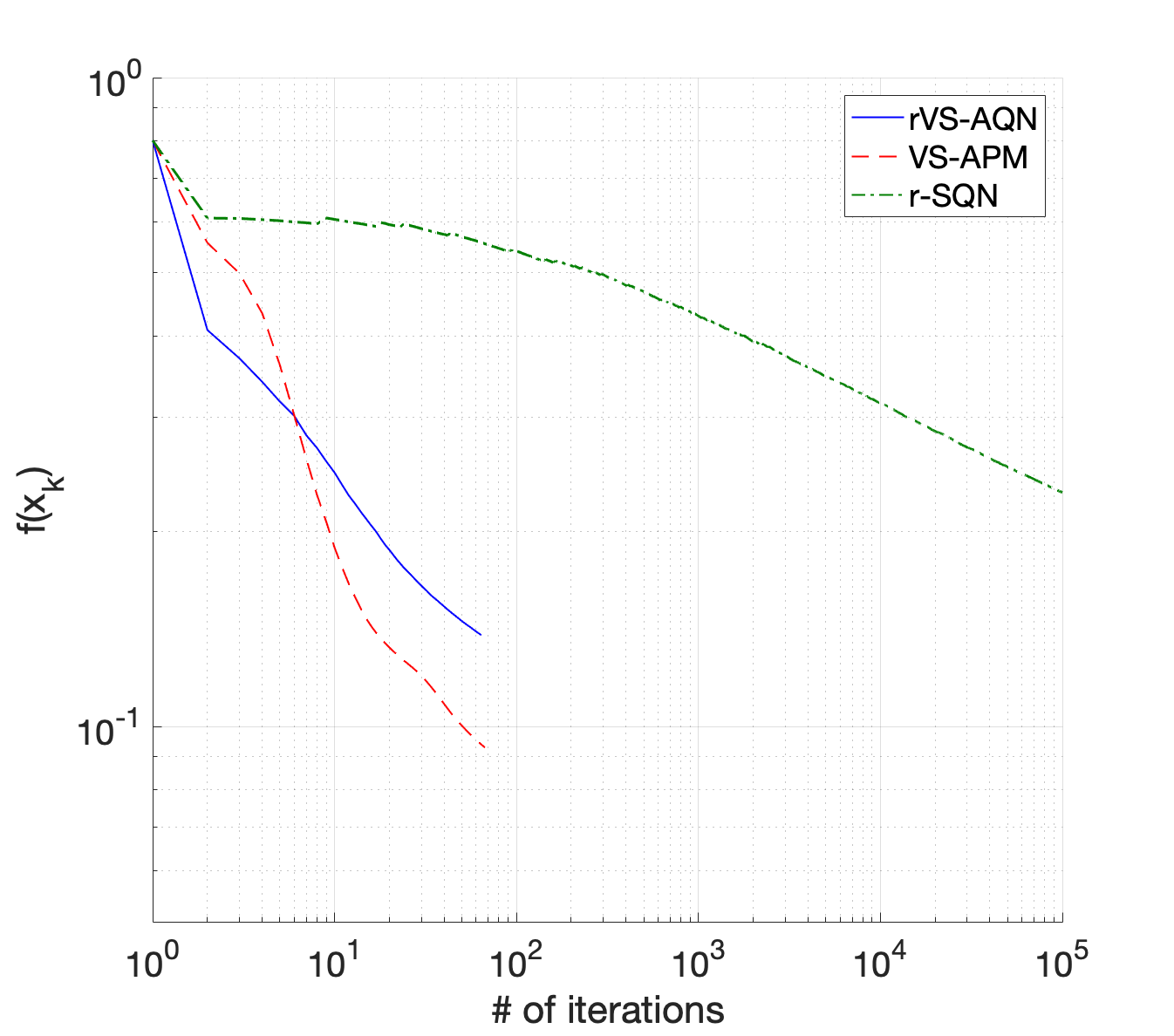}
	\includegraphics[scale=0.085]{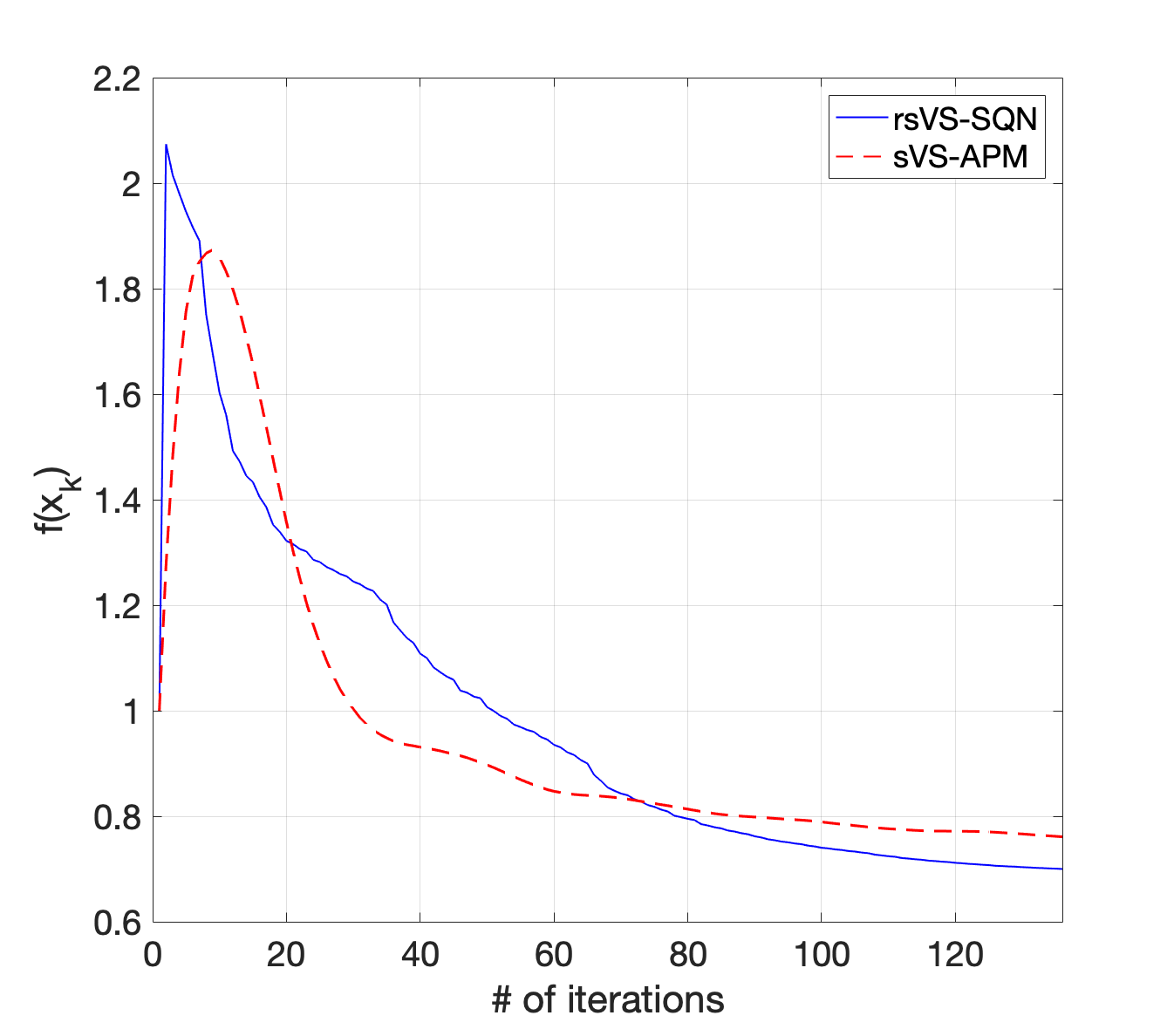}}
\caption{Left to right: (a) SC smooth, (b) SC nonsmooth, (c) C smooth, (d) C nonsmooth\label{fig}}{}
	\end{figure}
\noindent {\bf (1.3) Convex and smooth}: We implement \eqref{rVS-SQN} on the
(LRM) problem and compare the result with VS-APM~\cite{jalilzadeh2018optimal}
and r-SQN~\cite{yousefian2017stochastic}. We again consider the {\sc sido0}
dataset with a total budget of $1e5$ while the parameters are tuned to ensure
good performance.  In Figure \ref{fig} (c) we compare three different methods
while the choices of steplength and sample size can be seen in
Table~\ref{compare_tab}. \us{We note that (VS-APM) produces slightly better solutions, which is not surprising since it enjoys a rate of $\mathcal{O}(1/k^2)$ with an optimal oracle complexity. However, \eqref{rVS-SQN} is competitive and appears to be better than (r-SQN) by a significant margin in terms of the function value.} \jal{In addition, the last three columns of Table~\ref{cons_dim} capture the distinctions in performance between employing constant versus  diminishing steplengths.}

\begin{table}[htb]
\centering
\scriptsize
	\begin{tabular}{|c|c|c|c||c|c|} \hline 
	&\multicolumn{3}{|c||}{convex, smooth}&\multicolumn{2}{|c|}{convex, nonsmooth}\\ \hline
	& {\bf rVS-SQN}& r-SQN & VS-APM & {\bf rsVS-SQN}&  sVS-APM \\ \hline  \hline
	sample size: $N_k$& $k^{2+\varepsilon}$&1&$k^{2+\varepsilon}$& $(k+1)^{1+\varepsilon}$&$(k+1)^{1+\varepsilon}$\\ \hline
	steplength: $\gamma_k$&$k^{-\varepsilon}$&$k^{-2/3}$&$1/(2L)$&$K^{-1/3+\varepsilon}$&$1/(2k)$\\ \hline
	regularizer: $\mu_k$&$k^{2/3\varepsilon-1}$&$k^{-1/3}$&-&$K^{-1/3}$&-\\ \hline
	smoothing: $\eta_k$&-&-&-&$K^{-1/3}$&$1/k$\\ \hline
	$f(x_k)$&1.38e-1&2.29e-1&9.26e-2&6.99e-1&7.56e-1\\ \hline
	 	\end{tabular}
	\caption{	{\bf sido0:} C, smooth and nonsmooth}
	\label{compare_tab}
	\end{table}

\noindent {\bf (1.4.) Convex and nonsmooth}: Now we consider the nonsmooth problem in which $\lambda=0.1$.
\begin{align}\label{nonsmooth LRM}
\min_{x \in \mathbb R^n} f(x):=\frac{1}{N}\sum_{i=1}^N\ln \left(1+\exp\left(-u_i^Txv_i\right)\right)+\lambda\|x\|_1. 
\end{align}
We implement {\bf rsVS-SQN} scheme with a  total budget of $1e4$. (see Table~\ref{compare_tab} and Fig.~\ref{fig} (d)) \us{observe that it competes well with (sVS-APM)~\cite{jalilzadeh2018optimal}, which has a superior convergence rate of $\mathcal{O}(1/k)$.}    

\blue{\noindent {\bf (1.5.) Sparsity} {We now compare} the sparsity of the estimators obtained via (\ref{rVS-SQN}) scheme with averaging-based stochastic gradient schemes. Consider the following example where we consider the smooth approximation of $\|.\|_1$, leading to a convex and smooth problem. 
\begin{align*}
\min_{x \in \mathbb R^n} f(x):=\frac{1}{N}\sum_{i=1}^N\ln \left(1+\exp\left(-u_i^Txv_i\right)\right)+\lambda\aj{\sum_{i=1}^n\sqrt{x_i^2+\lambda_2}},
\end{align*}
where we set $\lambda=1$e-$4$. We chose the parameters according to Table
\ref{compare_tab}, total budget is $1e5$ and $\|x_K\|_0$ denotes the number of
entries in $x_K$ that are greater than $1$e-$4$. Consequently, {$n_0 \triangleq n - \|x_K\|_0$}
denotes the number of ``zeros'' in the vector. As it can be seen in Table
\ref{compare_spars}, the solution obtained by (\ref{rVS-SQN}) is significantly
{sparser than that obtained by} ({\bf VS-APM}) and standard stochastic gradient. In fact,
SGD produces nearly dense vectors while (\ref{rVS-SQN}) produces vectors,
$10\%$ of which are sparse for $\lambda_2 = 1e$-$6.$} \begin{table}[htb]
\centering
\scriptsize
	{\begin{tabular}{|c|c|c|c|c|} \hline 
	&{\bf rVS-SQN}&({\bf VS-APM})&SGD\\ \hline 
	$N_k$&$k^{2+\epsilon}$&$k^{2+\epsilon}$&1\\ \hline
	$\#$ of iter.&66&66&1e5 \\ \hline
	$n_0$ for $\lambda_2=1$e-$5$&144&31&0\\ \hline
	$n_0$ for $\lambda_2=1$e-$6$&497&57&2\\ \hline
	 	\end{tabular}}
		\caption{{\bf sido0:} Convex, smooth }
	\label{compare_spars}
	\end{table}
	
\noindent {\bf Example 2. Impact of size and ill-conditioning.} {In Example {1}, we observed that \eqref{rVS-SQN} {competes well} with VS-APM for a subclass of machine learning problems. We now consider a stochastic quadratic program over a general probability space and observe similarly competitive behavior. In fact,  \eqref{rVS-SQN} often outperforms ({\bf VS-APM})~\cite{jalilzadeh2018optimal} (see Tables~\ref{sc_tab_example} and \ref{c_tab_example})}. We consider the following problem.
\begin{align*}
\min_{x\in \mathbb R^n} \mathbb E\left[{1\over 2}x^TQ(\omega)x+c(\omega)^Tx\right],
\end{align*}
where $Q(\omega)\in \mathbb R^{n\times n}$  is a random symmetric matrix such that the eigenvalues are chosen uniformly at random and the minimum eigenvalue is one and zero for strongly convex and convex problem, respectively.  Furthermore, $ {c_\omega}=-Q(\omega)x^0$, where $x^0\in \mathbb R^{n\times 1}$ is a vector whose elements are chosen randomly from the standard Gaussian distribution. 
\begin{table}[htb]
\begin{minipage}[b]{0.5\linewidth}
	
	\scriptsize
	\begin{tabular}{|c|c|c|} \hline 
	&\eqref{VS-SQN}&({\bf VS-APM})\\ \hline
	n&$\mathbb E[f(x_k)-f(x^*)]$&$\mathbb E[f(x_k)-f(x^*)]$\\ \hline
	20&$3.28$e-$6$& $5.06$e-$6$ \\ \hline
	60&$9.54$e-$6$& $1.57$e-$5$\\ \hline
	100&$1.80$e-$5$&$2.92$e-$5$\\ \hline
	\end{tabular}
	\caption{Strongly convex: \\  \eqref{VS-SQN} vs ({\bf VS-APM})}
	\label{sc_tab_example}
	\end{minipage}
	\begin{minipage}[b]{0.45\linewidth}
		
	\scriptsize
	\begin{tabular}{|c|c|c|} \hline 
	&\eqref{rVS-SQN}&({\bf VS-APM})\\ \hline
	n&$\mathbb E[f(x_k)-f(x^*)]$&$\mathbb E[f(x_k)-f(x^*)]$\\ \hline
	20&$9.14$e-$5$&$1.89$e-$4$ \\ \hline
	60&$2.67$e-$4$&$4.35$e-$4$\\ \hline
	100&$5.41$e-$4$&$8.29$e-$4$\\ \hline
	\end{tabular}
	\caption{Convex: \\ \eqref{rVS-SQN} vs ({\bf VS-APM})}
	\label{c_tab_example}
	\end{minipage}
\end{table}
{In Tables \ref{quad_ill} and \ref{convex_ill}, we compare the behavior of \eqref{rVS-SQN} and ({\bf VS-APM}) when
the problem is ill-conditioned {in strongly convex and convex regimes,
respectively}. {In strongly convex regimes}, we set the total budget equal
to $2e8$ and maintain the steplength as equal for both schemes. The sample size
sequence is chosen to be $N_k=\lceil 0.99^{-k}\rceil$, leading to $1443$ steps
for both methods. {We observe that as $m$ grows, the relative quality of the
solution compared to ({\bf VS-APM}) improves even further.} {These findings are reinforced in 
Table \ref{convex_ill}, where for merely convex problems, although  the convergence rate for ({\bf VS-APM})
is $\mathcal O(1/k^2)$ (superior to $\mathcal O(1/k)$ for (\ref{rVS-SQN}), (\ref{rVS-SQN})
outperforms ({\bf VS-APM}) in terms of empirical error. Note that parameters are chosen
similar to Table \ref{compare_tab}. }

\begin{table}[htbp]
\begin{minipage}[b]{0.5\linewidth}
	\centering
	\tiny
	\begin{tabular}{|c|c|c|c|} \hline 
	&\multicolumn{3}{|c|}{$\mathbb E[f(x_k)-f(x^*)]$}\\ \hline
	$\kappa$&\eqref{VS-SQN}, $m=1$&\eqref{VS-SQN}, $m=10$&({\bf VS-APM})\\ \hline
	$1e5$ &$9.25$e-$4$&$2.656$e-$4$& $2.600$e-$3$\\ \hline
	$1e6$ &$9.938$e-$5$&$4.182$e-$5$&$4.895$e-$4$\\ \hline
	$1e7$ &$1.915$e-$5$&$1.478$e-$5$&$1.079$e-$4$\\ \hline
	$1e8$ &$1.688$e-$5$&$6.304$e-$6$&$4.135$e-$5$\\ \hline
	\end{tabular}
	\caption{Strongly convex: \\Performance vs Condition number (as $m$ changes)}
	\label{quad_ill}
\end{minipage}
	\begin{minipage}[b]{0.5\linewidth}	
	\centering
	\tiny
	{\begin{tabular}{|c|c|c|c|} \hline 
	&\multicolumn{3}{|c|}{$\mathbb E[f(x_k)-f(x^*)]$}\\ \hline
	$L$&\eqref{rVS-SQN}, $m=1$&\eqref{rVS-SQN}, $m=10$&({\bf VS-APM})\\ \hline
	$1e3$ &$4.978$e-$4$&$1.268$e-$4$&$1.942$e-$4$\\ \hline
	$1e4$ &$3.288$e-$3$&$2.570$e-$4$&$3.612$e-$2$\\ \hline
	$1e5$ &$8.571$e-$2$&$3.075$e-$3$&$2.794$e+$0$\\ \hline
	$1e6$ &$3.367$e-$1$&$3.203$e-$1$&$4.293$e+$0$\\ \hline
	\end{tabular}}
	\caption{Convex: \\Performance vs Condition number (as $m$ changes)}
	\label{convex_ill}
		\end{minipage}
	\end{table}

\noindent {\bf Example 3. Constrained Problems.} We consider the isotonic constrained LASSO problem. 
\begin{align}\label{isotonic}
\min_{x =[x_i]_{i=1}^n\in \mathbb R^n}~ \left\{ \frac{1}{2}\sum_{i=1}^p \|A_ix-b_i\|^2 \mid x_1\leq x_2\leq \hdots\leq x_n \right\},
\end{align}
where $A=[A_i]_{i=1}^p\in\mathbb{R}^{n\times p}$ is a matrix whose elements are chosen randomly from standard Gaussian distribution such that the $A^\top A\succeq 0$ 
and $b=[b_i]_{i=1}^p\in\mathbb{R}^p$ such that $b=A(x_0+ {\sigma})$ where
$x_0\in\mathbb{R}^n$ is chosen such that the first and last $\frac{n}{4}$ of
its elements are chosen from $U([-10,0])$ and $U([0,10])$ in ascending order,
respectively, while the other elements are set to zero. Further,
${\sigma}\in\mathbb{R}^n$ is a random vector whose elements are independent
normally distributed random variables with mean zero and standard deviation
$0.01$.  Let $C\in\mathbb{R}^{n-1\times n}$ be a matrix that captures the
constraint, i.e., $C(i,i)=1$ and $C(i,i+1)=-1$ for $1\leq i\leq n-1$ and its
other components are zero and let $X\triangleq \{x~:~Cx\leq 0\}$. Hence, we
can rewrite the problem
\eqref{isotonic} as $\min_{x \in \mathbb R^n} f(x):=\frac{1}{2}\sum_{i=1}^p
\|A_ix-b_i\|^2+\mathcal{I}_{X}(x)$. We know that the smooth approximation of
the indicator function is $\mathcal I_{X,\eta}={1\over 2\eta} d^2_{X}(x)$.
Therefore, we apply \eqref{rsVS-SQN} on the following problem 
\begin{align}\label{isotonic_smooth}
\min_{x \in \mathbb R^n} f(x) & \triangleq \frac{1}{2}\sum_{i=1}^p \|A_ix-b_i\|^2+{1\over 2\eta} d^2_{X}(x).
\end{align}
{Parameter choices are similar to those in Table \ref{compare_tab} and we note from Fig.~\ref{fig_isotonic} (Left) that empirical behavior appears to be favorable. }
\begin{figure}[htb]
	\centering
	\includegraphics[scale=0.1]{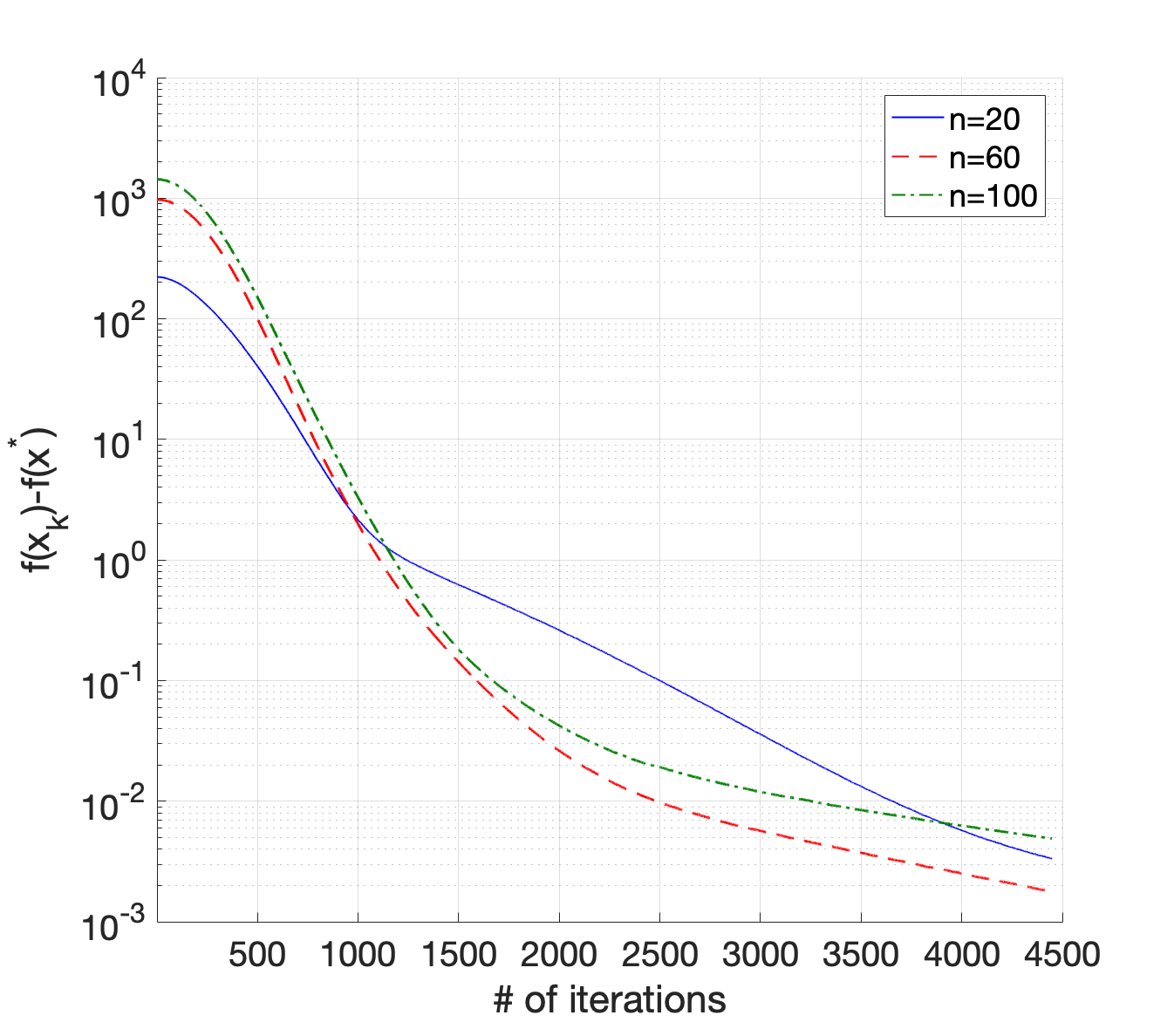}
	\includegraphics[scale=0.1]{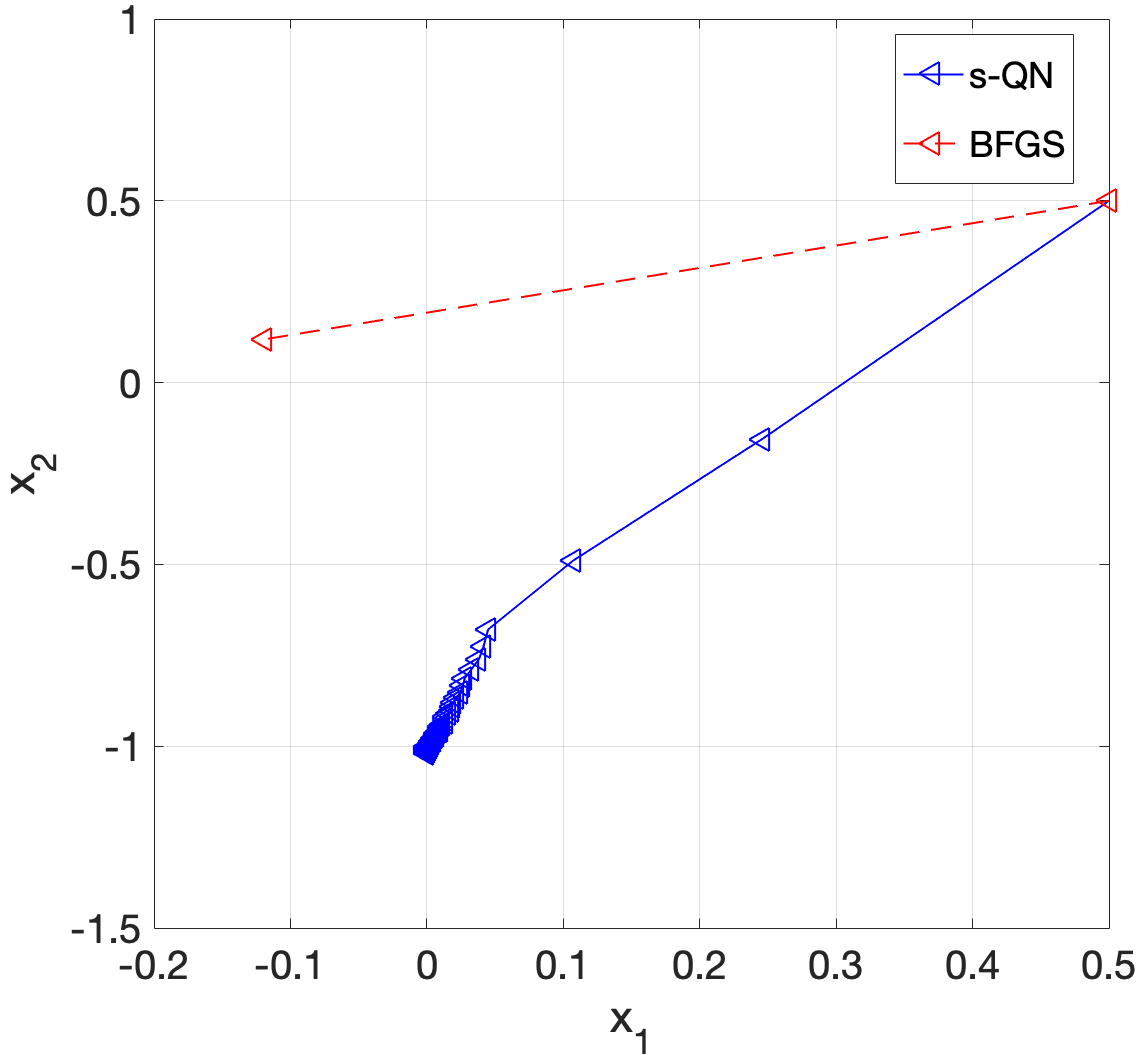}\caption{Left: \eqref{sVS-SQN} Right: \eqref{sVS-SQN}~vs.~BFGS}
		\label{fig_isotonic}
\end{figure}


\noindent {\bf Example 4. Comparison of ({\bf s-QN}) with BFGS}
In~\cite{lewis2008behavior}, the authors show that a nonsmooth BFGS scheme may
take null steps and fails to converge to the optimal solution
(See~Fig.~\ref{fig:nssqn}) and consider the following problem.  \begin{align*}
\min_{x {\in \mathbb R^2}}  \qquad {1\over 2}\|x\|^2+\max\{2|x_1|+x_2,3x_2\}.
\end{align*} In this problem, {BFGS takes a null step after two iterations
(steplength is zero)}; however ({\bf s-QN}) (the deterministic version of \eqref{sVS-SQN}) converges to the optimal solution.
Note that the optimal solution is $(0,-1)$ and ({\bf s-QN}) reaches
$(0,-1.0006)$ in just $0.095$ seconds (see Fig.~\ref{fig_isotonic} (Right)).
\section{Conclusions}
Most SQN schemes can process  smooth and strongly convex stochastic optimization problems
and there appears be a gap in the asymptotics and rate statements in
addressing merely convex and possibly nonsmooth settings. Furthermore, a clear
difference exists between deterministic rates and their stochastic
counterparts, paving the way for developing variance-reduced schemes.  In
addition, much of the available statements rely on a somewhat stronger
assumption of uniform boundedness of the conditional second moment of the
noise, which is often difficult to satisfy in unconstrained regimes.
Accordingly, the present paper makes three sets of contributions. First, a
regularized smoothed L-BFGS update is proposed that combines regularization
and smoothing, providing a foundation for addressing nonsmoothness and a lack
of strong convexity. Second, we develop a variable sample-size SQN scheme
\eqref{VS-SQN} for strongly convex problems and its Moreau smoothed variant
\eqref{sVS-SQN} for nonsmooth (but smoothable) variants, both of which attain a
linear rate of convergence and an optimal oracle complexity. \jal{To contend with the possible unavailability of strong convexity and Lipschitzian parameters, we also derive sublinear rates of convergence for diminishing steplength variants.} Third, in merely convex regimes, we develop a regularized VS-SQN
\eqref{rVS-SQN} and its smoothed variant \eqref{rsVS-SQN} for smooth and
nonsmooth problems respectively. The former achieves a rate of
$\mathcal{O}(1/K^{1-\epsilon})$ while the rate degenerates to
$\mathcal{O}(1/K^{1/3-\epsilon})$ in the case of the latter. Finally, numerics
suggest that the SQN schemes compare well with their variable sample-size
accelerated gradient counterparts and perform particularly well in comparison
when the problem is afflicted by ill-conditioning.       
 


\bibliographystyle{siam} 
\bibliography{demobib,wsc11-v02}
 \section{Appendix}{\bf Proof of Proposition \ref{thm:mean:smooth:strong}.}  \usr{From Assumption~\ref{assum:as_convex_smooth} (a,b), $f$ is $\tau$-strongly convex and $L$-smooth.}  From Lipschitz continuity of $\nabla f(x)$ and update rule \eqref{VS-SQN}, we have the following:
\begin{align*}
f(x_{k+1})&\leq f(x_k)+\nabla f(x_k)^T(x_{k+1}-x_k)+{L\over 2 }\|x_{k+1}-x_k\|^2\\&
=f(x_k)+\nabla f(x_k)^T\left(-\gamma_kH_k(\nabla f(x_k)+\bar w_{k,N_k})\right)+{L\over 2 }\gamma_k^2\left\|H_k(\nabla f(x_k)+\bar w_{k,N_k})\right\|\uvs{^2},
\end{align*}
{where $\bar{w}_{k,N_k} \triangleq \frac{\sum_{j=1}^{N_k} \left({\nabla_{x}}
{F}(x_k,\omega_{j,k})-\nabla f(x_k)\right)}{N_k}$}. By taking expectations \uvs{conditioned on} $\mathcal F_k$, using Lemma {\ref{H_k sc}}, and  Assumption  \ref{state noise} (S-M) and (S-B), we obtain the following.
\begin{align*}
& \quad \mathbb E\left[f(x_{k+1})-f(x_k)\mid \mathcal F_k\right]\leq -\gamma_k \nabla f(x_k)^TH_k\nabla f(x_k)+{L\over 2 }\gamma_k^2\|H_k\nabla f(x_k)\|^2+{\gamma_k^2\olambda^2L\over 2 }\mathbb E[\|\bar w_{k,N_k}\|^2\mid \mathcal F_k]\\&
={\gamma_k}\nabla f(x_k)^TH_k^{1/2}\left(-I+{L\over 2 }\gamma_k\uvs{H_k}\right)H_k^{1/2}\nabla f(x_k)+{\gamma_k^2\olambda^2L(\nu_1^2\|x_k\|^2+\nu_2^2)\over 2 N_k}\\&
\leq -\gamma_k \left(1-{L\over 2 }\gamma_k\olambda\right)\|H_k^{1/2}\nabla f(x_k)\|^2+{\gamma_k^2\olambda^2L(\nu_1^2\|x_k\|^2+\nu_2^2)\over 2 N_k}
= {-\gamma_k\over 2}\|H_k^{1/2}\nabla f(x_k)\|^2+{ \nu_1^2\|x_k\|^2+\nu_2^2\over 2LN_k},
\end{align*} 
where {the last equality follows from} $\gamma_k=  \tfrac{1}{L\olambda}$ for all $k$. Since $f$ is strongly convex with modulus $\tau$, $\|\nabla f(x_k)\|^2\geq 2\tau \left(f(x_k)-f(x^*)\right)$. \uvs{Therefore by subtracting $f(x^*)$} from both sides, we obtain:
\begin{align}\label{strong:smooth}
\mathbb E\left[f(x_{k+1})-f(x^*)\mid \mathcal F_k\right]\nonumber&\leq f(x_{k})-f(x^*)-{\gamma_k\ulambda\over 2}\|\nabla f(x_k)\|^2+{ \nu_1^2\|x_k-x^*+x^*\|^2+\nu_2^2\over 2LN_k}\\&
\leq\left(1-\tau\gamma_k\ulambda+{2\nu_1^2\over L\tau N_k}\right) (f(x_{k})-f(x^*))+{ 2\nu_1^2\|x^*\|^2+\nu_2^2\over 2LN_k},
\end{align}
where the last inequality \uvs{is a consequence of} $f(x_k)\geq f(x^*)+{\tau\over 2}\|x_k-x^*\|^2$. Taking unconditional expectations on both sides of  \eqref{strong:smooth}, choosing  $\gamma_k={ 1\over L\olambda}$ for all $k$ and \uvs{invoking} the assumption that $\{N_k\}$ is an increasing sequence,  \uvs{we obtain the following.}
\begin{align*}
\mathbb E\left[f(x_{k+1})-f(x^*)\right]&
\leq\left(1-{\tau  \ulambda\over L\olambda}+{2\nu_1^2\over L\tau N_0}\right)\mathbb E\left[f(x_{k})-f(x^*)\right]+{  2\nu_1^2\|x^*\|^2+\nu_2^2\over 2LN_k}.
\end{align*}

{\bf Proof of Theorem \ref{th1}.} \usr{From Assumption~\ref{assum:as_convex_smooth} (a,b), $f$ is $\tau$-strongly convex and $L$-smooth.}
{\bf (i)} Let $a  \triangleq   \left(1-{\tau  \ulambda\over L\olambda}+{2\nu_1^2\over L\tau N_0}\right)$, $b_k \triangleq { 2 \nu_1^2\|x^*\|^2+\nu_2^2\over 2LN_k}$, and $N_k \triangleq \lceil N_0\rho^{-k}\rceil\geq N_0\rho^{-k}$. Note that, choosing $N_0\geq {2\nu_1^2\olambda\over \tau^2\ulambda}$ leads to $a<1$. Consider $C \triangleq \uvs{ \mathbb{E}[f(x_0)-f(x^*)]}+\left({2 \nu_1^2\|x^*\|^2+\nu_2^2\over 2N_0L}\right){1\over 1-( \min\{a,\rho\}/\max\{a,\rho\})}$. {Then}  {by Prop.~\ref{thm:mean:smooth:strong},} we obtain {the following for every $k \geq 1$.} 
\begin{align*}
\mathbb E&\left[f(x_{K+1})-f(x^*)\right]
\leq a^{K+1}{\mathbb{E}\left[f(x_0)-f(x^*)\right]}+\sum_{i=0}^{K}a^{K-i}b_{i}\\
&\leq a^{K+1}{\mathbb{E}\left[f(x_0)-f(x^*)\right]}+{(\max\{a,\rho\})^K(2 \nu_1^2\|x^*\|^2+\nu_2^2)\over 2N_0L}\sum_{i=0}^{K}\left({\min\{a,\rho\}\over \max\{a,\rho\}}\right)^{K-i}\\
&\leq a^{K+1}{\mathbb{E}\left[f(x_0)-f(x^*)\right]}+\left({(2 \nu_1^2\|x^*\|^2+\nu_2^2)\over 2N_0L}\right){{(\max\{a,\rho\})^K}\over 1-( \min\{a,\rho\}/\max\{a,\rho\})}\leq C(\max\{a,\rho\})^{K}.
\end{align*}
Furthermore, {we may derive the number of steps $K$ to obtain an $\epsilon$-solution. Without loss of generality, suppose $\max\{a,\rho\}=a$. Choose $N_0 { \ \geq \ } {4\nu_1^2{\ulambda}\over \tau^2{\olambda}}$, then $a=\left(1-\left({\tau  \ulambda\over 2L\olambda}\right)\right)=1-{1\over \alpha\kappa}$, where $\alpha={2\olambda\over \ulambda}$}. Therefore, since $\frac{1}{a}  = \frac{1}{(1-\frac{1}{\alpha {\kappa}})}$, by using the definition of $\ulambda$ and $\olambda$ in Lemma \ref{H_k sc} to get $\alpha= {2\olambda\over \ulambda}=\mathcal O({\kappa^{m+n}})$, we obtain that 
\begin{align}
 \left(\frac{ \ln(C) - \ln(\epsilon)} {\ln(1/a)}\right) =
\left(\frac{\ln (C/\epsilon)}{\ln(1/(1-{1\over \alpha \kappa}))}\right)
= \left(\frac{\ln (C/\epsilon)}{-\ln((1-{1\over \alpha \kappa}))}\right) \leq \left(\frac{\ln (C/\epsilon)}{{1\over \alpha \kappa}}\right)   
\notag   = \mathcal{O} {({\kappa^{m+n+1}} \ln(\tilde {C}/\epsilon))},
\end{align}
{where the {bound} holds when $\alpha \kappa > 1$. It follows that the iteration complexity of computing an $\epsilon$-solution is $\mathcal{O}(\kappa^{m+1} \ln(\tfrac{C}{\epsilon}))$.}
{\bf(ii)} To compute a vector $x_{K+1}$ satisfying $\mathbb{E}[f(x_{{K+1}})-f^*]\leq \epsilon$, we {consider the case where $a > \rho$ while the other case follows similarly.} Then we have that $C{a}^{K}\leq \epsilon$, implying that  
$K = \lceil \ln_{(1/  {a})}(C/\epsilon)\rceil.$
To obtain the optimal oracle complexity, we require $\sum_{k=1}^{K} N_k$ gradients. If $N_k=\lceil N_0a^{-k}\rceil\leq 2N_0a^{-k}$, we obtain the following since $(1-a) = 1 \slash (\alpha {\kappa})$.
\begin{align*}
& \quad \sum_{k=1}^{\ln_{(1/{a})}\left(C/\epsilon\right)+1} 2N_0a^{-k} 
 \leq \frac{2N_0}{\left(\frac{1}{{a}} -1\right)}\left({1\over a}\right)^{3+\ln_{(1/  {a})}\left(C/\epsilon\right)}  \leq \left( C \over \epsilon\right)\frac{2N_0}{a^2(1-{a})} 
 = \frac{ 2N_0\alpha {\kappa} C}{a^2\epsilon}.
\end{align*}
Note that $a=1-{1\over \alpha\kappa}$ {and $\alpha=\mathcal O{(\kappa^{m+n})}$}, implying that
\begin{align*}
  a^2 & = 1-2/(\alpha \kappa)+1/(\alpha^2\kappa^2)\geq {\alpha^2\kappa^2-2\alpha\kappa^2+1\over \alpha^2\kappa^2}\geq{ \alpha^2\kappa^2-2\alpha\kappa^2\over \alpha^2\kappa^2}={(\alpha^2-2\alpha)\over \alpha^2}\\
  \implies & {\kappa\over a^2}\leq {\alpha^2  \kappa\over (\alpha^2-2\alpha)}=\left(\alpha\over \alpha-2\right)\kappa
 \implies   \sum_{k=1}^{\ln_{(1/{a})}\left(C/\epsilon\right)+1} a^{-k} \leq {2N_0\alpha^2\kappa C\over (\alpha-2)\epsilon}=\mathcal O\left({{ \kappa^\jal{m+n+1}}\over \epsilon}\right).
\end{align*}

\noindent (iii) Similar to ~\eqref{strong:smooth}, using strong convexity of $f$ and choosing $N_k = \lceil{k^{p-s}}\rceil$ and ${\gamma_k=k^{-s}}$, we can obtain the following.
{\begin{align*}
&\mathbb E\left[f(x_{k+1})-f(x_k)\mid \mathcal F_k\right]  \leq -\gamma_k \left(1-{L\over 2 }\gamma_k\olambda\right)\|H_k^{1/2}\nabla f(x_k)\|^2+{\gamma_k^2\olambda^2L(\nu_1^2\|x_k\|^2+\nu_2^2)\over 2 N_k}
\\
		&\implies \mathbb E\left[f(x_{k+1})-f(x^*)\mid \mathcal F_k\right] \leq \left(1-\gamma_k(1-\tfrac{L\olambda\gamma_k}{2})\ulambda\tau+\tfrac{\gamma_k^2\olambda^2L\nu_1^2}{\tau N_k}\right)(f(x_k)-f(x^*))+\tfrac{\gamma_k^2\olambda^2L(2\nu_1^2\|x^*\|^2+\nu_2^2)}{2N_k}. 
\end{align*}}
\jal{Since $\gamma_k$ is a diminishing sequence and $N_k$ is an increasing sequence, for {sufficiently large} $K$ we have that $\tfrac{L\olambda\gamma_k^2\ulambda\tau}{2}+\tfrac{\gamma_k^2\olambda^2L\nu_1^2}{\tau N_k}\leq {1\over 2}\gamma_k\ulambda\tau$. Therefore, we obtain:
 \begin{align*}
\mathbb E\left[f(x_{k+1})-f(x^*)\mid \mathcal F_k\right] &  \leq \left(1-\tfrac{\gamma_k\ulambda\tau}{2} \right) (f(x_{k})-f(x^*))+ \tfrac{\gamma_k^2\olambda^2L(2\nu_1^2\|x^*\|^2+\nu_2^2)}{2N_k}\\
	& =  \left(1-\tfrac{c}{k^s}  \right) (f(x_{k})-f(x^*))+ \tfrac{d}{k^{p+s}}, \mbox{ for } k \geq K,
\end{align*}
where $c \triangleq \tfrac{\ulambda\tau}{2}$ and $d \triangleq { \olambda^2L(2\nu_1^2\|x^*\|^2+\nu_2^2)\over 2}$.} Then by taking unconditional expectations and recalling that $0 < s < 1$, $s < p$, we may invoke Lemma~\ref{poly-rate} to claim that there exists $K$ such that 
{\begin{align*}
\mathbb E\left[f(x_{k+1})-f(x^*)\right] &  \leq \left(\frac{d}{ck^{p}}\right)  + o\left(\frac{1}{k^{p}}\right), \quad k \geq K.
\end{align*}} 
 
 {\bf Proof of Lemma \ref{H_k sc} and Lemma \ref{rsLBFGS-matrix} :}
\afr{First we prove Lemma \ref{rsLBFGS-matrix} and then we show that how the result in Lemma \ref{H_k sc} can be proved similarly}. Recall that $\ulambda_k$ and $\olambda_k$  denote the minimum and maximum eigenvalues of $H_k$, respectively. Also, we denote the inverse of matrix $H_k$ by $B_k$. 
\begin{lemma}\label{lemma appen}\cite{yousefian2017stochastic}
Let $0 < a_1 \leq a_2 \leq \hdots \leq a_n$, $P$ and $S$ be positive scalars such that $\sum_{i=1}^n a_i \leq S$ and $\Pi_{i=1}^n a_i \geq P$ . Then, we have
$a_1 \geq \frac{(n-1)!P}{S^{n-1}}.$
\end{lemma}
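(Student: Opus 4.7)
The plan is to exploit the monotonicity $a_1 \leq a_2 \leq \cdots \leq a_n$ together with the sum constraint to derive a per-index upper bound on each $a_i$ for $i \geq 2$, and then substitute these into the product lower bound $P \leq \prod_{i=1}^n a_i$ to isolate $a_1$.

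First I would observe that, because the sequence is nondecreasing, for any fixed $i \in \{2,\ldots,n\}$ every term $a_j$ with $j \geq i$ satisfies $a_j \geq a_i$. Consequently,
\begin{align*}
S \ \geq\ \sum_{j=1}^{n} a_j \ \geq\ \sum_{j=i}^{n} a_j \ \geq\ (n-i+1)\, a_i,
\end{align*}
which rearranges to the key per-index estimate $a_i \leq \tfrac{S}{\,n-i+1\,}$ for each $i=2,\ldots,n$. This is the one nontrivial step; the rest is bookkeeping.

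Next I would split the product and apply these $n-1$ bounds termwise, using $\prod_{i=1}^n a_i \geq P$:
\begin{align*}
P \ \leq\ \prod_{i=1}^n a_i \ =\ a_1 \prod_{i=2}^{n} a_i \ \leq\ a_1 \prod_{i=2}^{n} \frac{S}{n-i+1} \ =\ a_1 \cdot \frac{S^{n-1}}{(n-1)!},
\end{align*}
where the last equality follows from reindexing $k = n-i+1$ so that as $i$ ranges over $2,\ldots,n$, the denominator $n-i+1$ ranges over $n-1,n-2,\ldots,1$, whose product is $(n-1)!$. Solving for $a_1$ yields $a_1 \geq (n-1)!\,P/S^{n-1}$, which is the claimed inequality. (Positivity of all the $a_i$, guaranteed since $a_1 > 0$ and $a_i \geq a_1$, ensures the divisions and the product rearrangement are valid.)

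I do not anticipate any genuine obstacle; the only subtlety is recognizing that one must use the tail-sum bound $\sum_{j \geq i} a_j \geq (n-i+1)a_i$ rather than the naive global bound $a_i \leq S$, since the latter would only yield $a_1 \geq P/S^{n-1}$ and lose the factorial factor. Everything else is an elementary rearrangement.
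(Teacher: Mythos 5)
Your proof is correct. Note that the paper itself does not prove this lemma; it is stated in the appendix with a citation to \cite{yousefian2017stochastic}, so there is no in-paper argument to compare against. Your derivation --- using the ordering and the sum constraint to get the tail bound $a_i \leq S/(n-i+1)$ for $i \geq 2$, then multiplying these into the product bound to isolate $a_1$ and pick up the $(n-1)!$ factor --- is the standard argument for this fact and is complete as written.
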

{\bf Proof of Lemma \ref{rsLBFGS-matrix}:} It can be seen, by induction on $k$, that $H_k$ is symmetric and $\mathcal{F}_k$ measurable, assuming that all matrices are well-defined. We use induction on odd values of $k>2m$ to show that the statements of part (a), (b) and (c) hold and that the matrices are well-defined. Suppose $k>2m$ is odd and for any odd value of $t<k$, we have $s_t^T{y_t} >0$, $H_{t}{y}_t=s_t$, and part (c) holds for $t$. We show that these statements also hold for $k$. First, we prove that the secant condition holds. 
\aj{\begin{align*}
& s_k^T{y_k}=(x_{k}-x_{k-1})^T\left(\tfrac{\sum_{j=1}^{N_{k-1}}\left(\nabla F_{\eta_{k}^\delta}(x_k,\omega_{j,k-1})- \nabla F_{\eta_{k}^\delta}(x_{k-1},\omega_{j,k-1})\right)}{N_{k-1}}+\mu_k^\delta(x_k-x_{k-1})\right)\\
&=\tfrac{\sum_{j=1}^{N_{k-1}}\left[(x_{k}-x_{k-1})^T(\nabla F_{\eta_{k}^\delta}(x_k,\omega_{j,k-1})- \nabla F_{\eta_{k}^\delta}(x_{k-1},\omega_{j,k-1}))\right]}{N_{k-1}}+\mu_k^\delta\|x_k-x_{k-1}\|^2
\geq \mu_k^\delta\|x_k-x_{k-1}\|^2,
\end{align*}}
where the inequality follows from the monotonicity of the gradient map $\nabla F(\cdot,\omega)$. {From the induction hypothesis, $H_{k-2}$ is positive definite, since $k-2$ is odd.
Furthermore, since $k-2$ is odd, we have $H_{k-1}=H_{k-2}$ by the update rule \eqref{eqn:H-k}.  
Therefore, $H_{k-1}$ is positive definite.
Note that since $k-2$ is odd, the choice of $\mu_{k-1}$ is such that ${1\over N_{k-1}}\sum_{j=1}^{N_{k-1}}\nabla F_{\eta_{k}^\delta}(x_{k-1},\omega_{j,k-1})+\mu_{k-1}x_{k-1}\neq 0$ (see the discussion
following~\eqref{eqn:mu-k}).
Since $H_{k-1}$ is positive definite, 
we have $H_{k-1}\left({1\over N_{k-1}}\sum_{j=1}^{N_{k-1}}\nabla F_{\eta_{k}^\delta}(x_{k-1},\omega_{j,k-1})+\mu_{k-1}x_{k-1}\right) \neq 0$, implying that 
$x_{k} \neq x_{k-1}$. Hence
$s_k^T{y_k} \geq  \mu_k^\delta\|x_k-x_{k-1}\|^2 >0,$
where \us{the second inequality is a consequence of} $\mu_k>0$.
Thus, the secant condition holds.}
Next, we show that part (c) holds for $k$. Let $\ulambda_k$ and $\olambda_k$ denote the minimum and maximum eigenvalues of $H_k$, respectively. Denote the inverse of matrix $H_k$ in \eqref{eqn:H-k-m} by $B_k$. It is well-known that using the Sherman-
Morrison-Woodbury formula, $B_k$ is equal to $B_{k,m}$ given by 
\begin{align}\label{equ:B_kLimited}
B_{k,j}=B_{k,j-1}-\frac{B_{k,j-1}s_is_i^TB_{k,j-1}}{s_i^TB_{k,j-1}s_i}+\frac{y_iy_i^T}{y_i^Ts_i}, \quad i:=k-2(m-j) \quad 1 \leq j \leq m,
\end{align}
where $s_i$ and $y_i$ are defined by \eqref{equ:siyi-LBFGS} and $B_{k,0}=\frac{y_k^Ty_k}{s_k^Ty_k}\mathbf{I}$. First, we show that for any $i$, \begin{align}\label{equ:boundsForB0}
\mu_k^\delta \leq \frac{\|y_i\|^2}{y_i^Ts_i} \leq 1/\eta_k^{\delta}+\mu_k^\delta,
\end{align}
 Let us consider the function $h(x):={1\over N_{i-1}}\sum_{j=1}^{N_{i-1}}F_{\eta_{k}^\delta}(x,\omega_{j,i-1})+\frac{\mu_k^\delta}{2}\|x\|^2$ for fixed $i$ and $k$. Note that this function is strongly convex and has a gradient mapping of the form ${1\over N_{i-1}}\sum_{j=1}^{N_{i-1}}\nabla F_{\eta_{k}^\delta}(x_{i-1},\omega_{j,i-1})+\mu_k^\delta\mathbf{I}$ that  is Lipschitz with parameter ${1\over \eta_k^\delta}+\mu_k^\delta$. For a convex function $h$ with Lipschitz gradient with parameter $1/\eta_k^\delta+\mu_k^\delta$, the following inequality, referred to as co-coercivity  property, holds for any $x_1,x_2 \in \mathbb{R}^n$(see \cite{polyak1987introduction}, Lemma 2):
 $\|\nabla h(x_2)-\nabla h(x_1)\|^2 \leq (1/\eta_k^\delta+\mu_k^\delta)(x_2-x_1)^T(\nabla h(x_2)-\nabla h(x_1)).$
Substituting $x_2$ by $x_i$, $x_1$ by $x_{i-1}$, and recalling \eqref{equ:siyi-LBFGS}, the preceding inequality yields
\begin{align}\label{ineq:boundsForB0-1}\|y_i\|^2  \leq (1/\eta_k^\delta+\mu_k^\delta)s_i^Ty_i.\end{align}
Note that function $h$ is strongly convex with parameter $\mu_k^\delta$. Applying the Cauchy-Schwarz inequality, we can write
\[\frac{\|y_i\|^2}{s_i^Ty_i} \geq \frac{\|y_i\|^2}{\|s_i\|\|y_i\|} =\frac{\|y_i\|}{\|s_i\|}\geq  \frac{\|y_i\|\|s_i\|}{\|s_i\|^2} \geq \frac{y_i^Ts_i}{\|s_i\|^2}\geq \mu_k^\delta.\]
Combining this relation with \eqref{ineq:boundsForB0-1}, we obtain \eqref{equ:boundsForB0}. Next, we show that the maximum eigenvalue of $B_k$ is bounded. Let $Trace(\cdot)$ denote the trace of a matrix. Taking trace from both sides of \eqref{equ:B_kLimited} and summing up over index $j$, we obtain
\begin{align}\label{ineq:trace}
& \quad Trace(B_{k,m})=Trace(B_{k,0})-\sum_{j=1}^m Trace\left(\frac{B_{k,j-1}s_is_i^TB_{k,j-1}}{s_i^TB_{k,j-1}s_i}\right)+\sum_{j=1}^m Trace\left(\frac{y_iy_i^T}{y_i^Ts_i}\right)\\ \nonumber
& =Trace\left(\frac{\|y_i\|^2}{y_i^Ts_i}\mathbf{I}\right) - \sum_{j=1}^m \frac{\|B_{k,j-1}s_i\|^2}{s_i^TB_{k,j-1}s_i} + \sum_{j=1}^m \frac{\|y_i\|^2}{y_i^Ts_i}\leq n \frac{\|y_i\|^2}{y_i^Ts_i} +\sum_{j=1}^m (1/\eta_k^\delta+\mu_k^\delta) = (m+n)(1/\eta_k^\delta+\mu_k^\delta),
\end{align}
where the third relation is obtained by positive-definiteness of $B_k$ (this can be seen by induction on $k$, and using \eqref{equ:B_kLimited} and $B_{k,0}\succ 0$). Since $B_k=B_{k,m}$, the maximum eigenvalue of the matrix $B_k$ is bounded. As a result, 
\begin{align}\label{proof:lowerbound}
\ulambda_k\geq \frac{1}{(m+n)(1/\eta_k^\delta+\mu_k^\delta)}.\end{align}
 In the next part of the proof, we establish the bound for $\olambda_k$. From Lemma 3 in \cite{mokhtari2015global}, we have $
det(B_{k,m})=det(B_{k,0})\prod_{j=1}^m\frac{s_i^Ty_i}{s_i^TB_{k,j-1}s_i}.$
Multiplying and dividing by $s_i^Ts_i$, using the strong convexity of the function $h$, and invoking \eqref{equ:boundsForB0} and the result of \eqref{ineq:trace}, we obtain
\begin{align}\label{ineq:detBk}
det(B_{k})&=det\left(\frac{y_k^Ty_k}{s_k^Ty_k}\mathbf{I}\right)\prod_{j=1}^m\left(\frac{s_i^Ty_i}{s_i^Ts_i}\right)\left(\frac{s_i^Ts_i}{s_i^TB_{k,j-1}s_i}\right)  
 \geq\left(\frac{y_k^Ty_k}{s_k^Ty_k}\right)^n\prod_{j=1}^m\mu_k^\delta\left(\frac{s_i^Ts_i}{s_i^TB_{k,j-1}s_i}\right)\cr
& \geq  (\mu_k)^{(n+m)\delta} \prod_{j=1}^m \frac{1}{(m+n)(1/\eta_k^\delta+\mu_k^\delta)} = \frac{\mu_k^{(n+m)\delta}}{(m+n)^{m}(1/\eta_k^\delta+\mu_k^\delta)^m}.
\end{align}
Let $\alpha_{k,1}\leq \alpha_{k,2}\leq\ldots\leq\alpha_{k,n}$ be the eigenvalues of $B_k$ sorted non-decreasingly. Note that since $B_k\succ0$, all the eigenvalues are positive. Also, from \eqref{ineq:trace}, we know that $\alpha_{k,\ell}\leq (m+n)(L+\mu_0^\delta)$. Taking \eqref{ineq:boundsForB0-1} and \eqref{ineq:detBk} into account, and employing Lemma \ref{lemma appen}, we obtain \[\alpha_{k,1}\geq \frac{(n-1)!\mu_k^{(n+m)\delta}}{(m+n)^{n+m-1}(1/\eta_k^\delta+\mu_k^\delta)^{n+m-1}}.\]
This relation and that $\alpha_{k,1}=\olambda_k^{-1}$ imply that 
\begin{align}\label{proof:upperbound}
\olambda_k\leq \frac{(m+n)^{n+m-1}(1/\eta_k^\delta+\mu_k^\delta)^{n+m-1}}{(n-1)!\mu_k^{(n+m)\delta}}.
\end{align}
Therefore, from \eqref{proof:lowerbound} and \eqref{proof:upperbound} and that $\mu_k$ is non-increasing, we conclude that part (c) holds for $k$ as well. Next, we show that $H_ky_k=s_k$. From \eqref{equ:B_kLimited}, for $j=m$ we obtain
\[B_{k,m}=B_{k,m-1}-\frac{B_{k,m-1}s_ks_k^TB_{k,m-1}}{s_k^TB_{k,m-1}s_k}+\frac{y_ky_k^T}{y_k^Ts_k},\]
where we used $i=k-2(m-m)=k$. Multiplying both sides of the preceding equation by $s_k$, and using $B_k=B_{k,m}$, we have
$B_{k}s_k=B_{k,m-1}s_k-B_{k,m-1}s_k+y_k=y_k.$ 
Multiplying both sides of the preceding relation by $H_k$ and invoking $H_k=B_k^{-1}$, we conclude that $H_ky_k=s_k$. Therefore, we showed that the statements of (a), (b), and (c) hold for $k$, assuming that they hold for any odd $2m<t<k$. In a similar fashion to this analysis, it can be seen that the statements hold for $t=2m+1$. Thus, by induction, we conclude that the statements hold for any odd $k>2m$. To complete the proof, it is enough to show that part (c)  holds for any even value of $k>2m$. Let $t=k-1$. Since $t>2m$ is odd, relation part (c) holds. Writing  it for $k-1$, and taking into account that $H_k=H_{k-1}$, and $\mu_k<\mu_{k-1}$, we can conclude that part (c) holds for any even value of $k>2m$ and this completes the proof.

\jal{{\bf Proof of Lemma \ref{H_k sc}:} {We observe} that Lemma \ref{H_k sc} is the special case of Lemma \ref{rsLBFGS-matrix}, where {the objective $f$ function is $L$-smooth and $\tau$-strongly convex}. 
\jal{Similar to} \eqref{proof:lowerbound}, \jal{by noting that $\mu_k = 0$ and $\tfrac{1}{\eta_k} = L$ for all $k$,  we may show that} $\ulambda_k = \ulambda=\frac{1}{L(m+n)}$ for all $k$. {Furthermore from the $\tau$-strongly convex nature of $f$, akin} to \eqref{proof:upperbound}, we obtain {that} $\olambda=\frac{\left((m+n)L\right)^{n+m-1}}{(n-1)!\tau^{n+m}}$. }






\end{document}